\newcommand{\inlineitem}[1][]{%
\ifnum\enit@type=\tw@
    {\descriptionlabel{#1}}
  \hspace{\labelsep}%
\else
  \ifnum\enit@type=\z@
       \refstepcounter{\@listctr}\fi
    \quad\@itemlabel\hspace{\labelsep}%
\fi} \makeatother
\newcommand{\ga}{\alpha}
\newcommand{\gb}{\beta}
\newcommand{\gl}{\lambda}
\newcommand{\gp}{\pi}
\newcommand{\gs}{\sigma}
\newcommand{\gt}{\tau}
\newcommand{\Gs}{\Sigma}
\newcommand{\subs}{\subset}
\newcommand{\sups}{\supset}
\newcommand{\sbnq}{\subsetneq}
\newcommand{\bs}{\backslash}
\newcommand{\nin}{\notin}
\newcommand{\ti}{\tilde}
\newcommand{\mbb}{\mathbb}
\newcommand{\mcl}{\mathcal}
\newcommand{\ol}{\overline}
\newcommand{\us}{\underset}
\newcommand{\os}{\overset}
\newcommand{\Lra}{\Leftrightarrow}
\newcommand{\lra}{\longrightarrow}
\newcommand{\Z}{\mbb Z}
\newcommand{\R}{\mcl R}
\newcommand{\Ra}{\Rightarrow}
\newcommand{\es}{\emptyset}
\newcommand{\eqdef}{\overset{\mathrm{def}}{=\joinrel=}}
\newcommand{\equ}[1]{%
\begin{equation*}
#1
\end{equation*}
}
\newcommand{\equa}[1]{%
\begin{equation*}
\begin{aligned}
#1
\end{aligned}
\end{equation*}
}
\newcommand{\equan}[2]{%
\begin{equation}
\label{Eq:#1}
\begin{aligned}
#2
\end{aligned}
\end{equation}
}
\DeclareMathOperator{\Det}{Det}
\newcommand{\mattwo}[4]{%
\begin{pmatrix}
  #1 & #2\\ #3 & #4
\end{pmatrix}
}
\newcommand{\mattwofour}[8]{%
\begin{pmatrix}
  #1 & #2 & #3 & #4\\#5 & #6 & #7 & #8
\end{pmatrix}
}
\newcommand{\matk}[1]{
\begin{pmatrix}
#1_{11}       & #1_{12}     & #1_{13}       &   \cdots  &   #1_{1(k-1)}     & #1_{1k}\\
#1_{21}       & #1_{22}     & #1_{23}       &   \cdots  &   #1_{2(k-1)}     & #1_{2k}\\
#1_{31}       & #1_{32}     & #1_{33}       &   \cdots  &   #1_{3(k-1)}     & #1_{3k}\\
\vdots        & \vdots      & \vdots        &   \ddots  &   \vdots          & \vdots\\
#1_{(k-1)1}   & #1_{(k-1)2} & #1_{(k-1)3}   &   \cdots  &   #1_{(k-1)(k-1)} & #1_{(k-1)k}\\
#1_{k1}       & #1_{k2}     & #1_{k3}       &   \cdots  &   #1_{k(k-1)}     & #1_{kk}\\
\end{pmatrix}
}
\theoremstyle{plain}
\newtheorem{theorem}{Theorem}[section]
\newtheorem{defn}[theorem]{Definition}
\newtheorem{prop}[theorem]{Proposition}
\newtheorem{lemma}[theorem]{Lemma}
\newtheorem{cor}[theorem]{Corollary}
\newtheorem{ques}[theorem]{Question}
\newtheorem{claim}[theorem]{Claim}
\newtheorem{obs}[theorem]{Observation}
\newtheorem{remark}[theorem]{Remark}
\newtheorem{example}[theorem]{Example}
\newtheorem{note}[theorem]{Note}
\numberwithin{equation}{section}
\begin{document}


\title[On the Surjectivity of Certain Maps]{On the Surjectivity of Certain Maps}
\author[C.P. Anil Kumar]{C.P. Anil Kumar}
\address{Stat Math Unit, Indian Statistical Institute, 8th Mile Mysore Road, RVCE Post, Bangalore-560059, India}
\email{akcp1728@gmail.com}
\subjclass[2010]{Primary 11R27 Secondary 13F05,13A15,11B25,16U60}
\keywords{schemes, commutative rings with unity, Dedekind type domains, Dedekind domains,
arithmetic progressions, projective spaces associated to ideals}


\begin{abstract}
We prove in this article the surjectivity of three maps. We prove in
Theorem~\ref{theorem:CRTSURJMOSTGENCASE} the surjectivity of the
Chinese remainder reduction map associated to the projective space of an
ideal with a given factorization into ideals whose radicals are
pairwise distinct maximal ideals. In
Theorem~\ref{theorem:SurjModIdeal} we prove the surjectivity of the
reduction map of the strong approximation type for a ring quotiented
by an ideal which satisfies unital set condition. In
Theorem~\ref{theorem:SurjSLkHighDimension} we prove for Dedekind type domains which include 
Dedekind domains, for $k \geq 2$, the map from $k\operatorname{-}$dimensional special linear
group to the product of projective spaces of $k\operatorname{-}$mutually co-maximal
ideals associating the $k\operatorname{-}$rows or $k\operatorname{-}$columns is surjective.
Finally this article leads to three interesting
questions~[\ref{question:Subgroups},~\ref{question:Variety},~\ref{question:ProjectiveSpace}]
mentioned in the introduction section.
\end{abstract}
\maketitle


\section{\bf{Introduction and a brief survey}}
This article concerns a generalization of Chinese Remainder Theorem in two different contexts. One is regarding the decomposition of a projective space as a finite product of projective spaces
associated to ideals. Projective spaces and projective varieties are of immense interest to algebraic geometers, as quite a few results can be stated over such spaces more precisely than over
affine spaces and affine varieties.
The other generalization is with regard to the representation of congruence classes of special linear matrices over a ring modulo an ideal of a certain type.
Similar ideas are discussed in the survey article~\cite{SA} by A.S.~Rapinchuk.
This article also concerns a third context about surjectivity of a map about representing elements of  projective spaces associated to a finite set of ideals by a special linear matrix over a  
Dedekind type domain (refer to Definition~\ref{defn:Good}). Moreover here we bring together similar concepts in ring theory (mainly in Section~\ref{sec:Preliminaries}) required to answer these 
three new questions stated in Sections~[\ref{sec:FMR},~\ref{sec:SMR},~\ref{sec:TMR}]. 
The theory of Dedekind domains has been studied from at least three perspectives including algebraic number theory (arithmetic) (R.~Ash~\cite{RA}, A.~Frolich and M.J.~Taylor~\cite{AFMJT}, S.~Lang~\cite{SL} and et al.), 
algebraic geometry (M.~Nagata~\cite{MN1},~\cite{MN2},~\cite{MN3}) and also ring theory. Here in the literature from the perspective of ring theory, the authors 
R.W.~Gilmer~\cite{GRW}, K.A.~Loper~\cite{LKA}, H.~Uda~\cite{UH} and et al. have studied rings which upon imposing noetherian condition gives rise to a Dedekind domain. They are known as almost Dedekind domains. 
Here also we define Dedekind type domain which upon imposing noetherian condition gives rise to a Dedekind domain (refer to Appendix Section~\ref{sec:Appendix}). 
This article is mostly self contained.


\subsection{\bf{The main results and open questions}}
~\\
We begin this section with a few definitions before we state the three main results and three open questions. 
Below in Definitions~[\ref{defn:IdealRadArbitraryIntegers},~\ref{defn:GEQRE}] we define the projective spaces associated to a certain class denoted by 
$\mcl{INTRAD}(\R)^{*}$ of ideals over arbitrary commutative rings $\R$ with unity.


\begin{defn}
\label{defn:IdealRadArbitraryIntegers} Let $\R$ be a commutative ring
with identity. Define the set of non-zero ideals 
\equan{IdealClass}{\mcl{INTRAD}(\R)^{*}=& \{\mcl{I}\sbnq \mcl{R}\mid (0)\neq \mcl{I} 
\text{ is an arbitrary intersection of ideals of } \R\\ 
&\text{whose radicals are all distinct maximal ideals }\}} and
$\mcl{INTRAD}(\R)=\mcl{INTRAD}(\R)^{*} \cup \{(0)\}$. 
\end{defn}


\begin{defn}
\label{defn:GEQRE} Let $\R$ be a commutative ring with identity. Let $k\in \mbb{N}$.
Let $0 \neq \mcl{I}\subs \R$ be an ideal such that $\mcl{I} \in
\mcl{INTRAD}(\R)$. Let $(a_0,a_1,a_2,\ldots,a_k)$,
$(b_0,b_1,b_2,\ldots,b_k) \in \R^{k+1}$. Suppose each of the sets
$\{a_0,a_1,a_2,\ldots,a_k\}$, $\{b_0,b_1,b_2,\ldots,b_k\}$ generate
the unit ideal $\R$. We say \equ{(a_0,a_1,a_2,\ldots,a_k) \sim_{GR}
(b_0,b_1,b_2,\ldots,b_k)} if and only if $a_ib_j-a_jb_i \in \mcl{I}$
for $0 \leq i < j \leq k$. This relation $\sim_{GR}$ is an
equivalence relation (see Lemma~\ref{lemma:GEQRE}). The equivalence
class of $(a_0,a_1,a_2,\ldots,a_k)$ is denoted by
$[a_0:a_1:a_2:\ldots:a_k]$. Define the $k\operatorname{-}$dimensional projective
space corresponding to $\mcl{I}$ denoted by
\equ{\mbb{PF}^k_{\mcl{I}}=\{[a_0:a_1:a_2:\ldots:a_k]\mid\text{ the
set }\{a_0,a_1,a_2,\ldots,a_k\} \subs \R \text{ generates the unit
ideal }=\R\}.} Note here we can have elements
$\{a_0,a_1,a_2,\ldots,a_k\},$ where each $a_i$ is not a unit $\mod
\mcl{I}$.
\end{defn}


In Section~\ref{sec:ProjSpace} we prove the existence of such projective
spaces. Now we define in Definition~\ref{defn:unitalset} below when a finite subset of a commutative ring $\R$ is a unital set.


\begin{defn}
\label{defn:unitalset}
Let $\R$ be a commutative ring with unity. Let $k\in \mbb{N}$. We say a finite subset 
\equ{\{a_1,a_2,\ldots,a_k\}\subs \R} consisting of $k\operatorname{-}$elements (possibly with repetition) 
is unital or a unital set if the ideal generated by the elements of the set is the unit ideal.
\end{defn}


Based on the previous definition, we make a relevant definition, the unital set condition for an ideal.
\begin{defn}[Unital set condition for an ideal]
\label{defn:UnitalSetCond} Let $\R$ be a commutative ring with unity. Let $k\in \mbb{N}$.
Let $\mcl{I} \sbnq \mcl{\R}$ be an ideal. We say $\mcl{I}$ satisfies
unital set condition $USC$ if for every unital set
$\{a_1,a_2,\ldots,a_k\} \subs \R$ with $k \geq 2$, there exists an
element $j \in (a_2,\ldots,a_k)$ such that $a_1+j$ is a unit modulo
$\mcl{I}$.
\end{defn}


Now we mention one more definition of a certain type of a ring known as Dedekind type domain.
\begin{defn}
\label{defn:Good} Let $\R$ be a commutative ring with unity. Suppose
the ring $\R$ satisfies the following four properties.
\begin{itemize}
\item (Property 1): For each maximal ideal $\mcl{M}$ we have $\mcl{M}^i \neq
\mcl{M}^{i+1}$ for all $i \geq 0$.
\item (Property 2): $\us{i \geq 0}{\bigcap}\mcl{M}^i=(0)$.
\item (Property 3): $dim_{\frac{\R}{\mcl{M}}}(\frac{\mcl{M}^i}{\mcl{M}^{i+1}})=1$ for all $i \geq 0$.
\end{itemize}
So as a consequence of these properties for a noetherian ring $\R$, it also satisfies the 
following property (see Appendix Section~\ref{sec:Appendix}).
\begin{itemize}
\item (Property 4): Every non-zero element $r\in \R$ is contained in finitely many
maximal ideals.
\end{itemize}
We say a ring $\R$ is a Dedekind type domain if it is a field or if it satisfies properties 
$(1),(2),(3),(4)$. A commutative ring with unity which satisfies these properties is actually a domain.
The examples for Dedekind type domains include integers, principal ideal domains, discrete valuations rings, 
Dedekind domains, Dedekind domains which are obtained as the localizations at any multiplicatively closed set 
of a Dedekind domain. We remark that a noetherian Dedekind type domain is a Dedekind domain (see Appendix Section~\ref{sec:Appendix}).
\end{defn}


Now we state the main results and
open questions.
\subsubsection{\bf{The first main result}}
\label{sec:FMR}
~\\
The first main result concerns the surjectivity of the Chinese remainder 
reduction map associated to a projective space of an ideal (refer to
Definition~\ref{defn:GEQRE}) with a given co-maximal
ideal factorization which is stated as:
\begin{theorem}
\label{theorem:CRTSURJMOSTGENCASE} Let $\R$ be a commutative ring
with unity. Let $k\in \mbb{N}$. Let $\mcl{I}=\mcl{Q}_1\mcl{Q}_2\ldots\mcl{Q}_k$ be a non-zero ideal, 
where $rad(\mcl{Q}_i)=\mcl{M}_i,1\leq i\leq k$ are pairwise distinct maximal ideals in
$\R$. Then the Chinese remainder reduction map associated to the
projective space \equ{\mbb{PF}^{l+1}_{\mcl{I}} \lra
\mbb{PF}^{l+1}_{\mcl{Q}_1} \times \mbb{PF}^{l+1}_{\mcl{Q}_2} \times
\ldots \times \mbb{PF}^{l+1}_{\mcl{Q}_k}} is surjective (in fact bijective).
\end{theorem}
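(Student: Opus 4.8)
The plan is to push the whole assertion down to the finite product ring $\R/\mcl I$ and then solve a lifting problem there. Write $n$ for the common dimension of the projective spaces involved (the displayed exponents should all agree; when $n=0$ each $\mbb{PF}^0$ is a one-point set and there is nothing to prove, so assume $n\ge 1$). Since distinct maximal ideals are comaximal, $rad(\mcl Q_i+\mcl Q_j)=rad(\mcl M_i+\mcl M_j)=\R$ for $i\ne j$, so the $\mcl Q_i$ are pairwise comaximal; hence $\mcl I=\mcl Q_1\cdots\mcl Q_k=\bigcap_i\mcl Q_i$ and the Chinese Remainder Theorem gives $\R/\mcl I\cong\prod_{i=1}^k R_i$ with $R_i:=\R/\mcl Q_i$. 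A maximal ideal of $\R$ containing $\mcl Q_i$ contains $rad(\mcl Q_i)=\mcl M_i$ and therefore equals $\mcl M_i$, so each $R_i$ is local and $S:=\R/\mcl I$ is a semilocal ring whose maximal ideals are the images of $\mcl M_1,\dots,\mcl M_k$.

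Next I would reformulate the surjectivity. The reduction map is well defined because $\mcl I\subseteq\mcl Q_i$ (a tuple congruence mod $\mcl I$ is a fortiori one mod each $\mcl Q_i$). Given a point $(P_1,\dots,P_k)$ of the target, choose for each $i$ a representative $\mbf a^{(i)}=(a_0^{(i)},\dots,a_n^{(i)})\in\R^{n+1}$ of $P_i$ whose entries generate $\R$; its image in $R_i^{n+1}$ is unimodular over $R_i$, and by CRT these assemble into a single row $\bar{\mbf v}\in S^{n+1}$ which is unimodular over $S$ (a row over a finite product is unimodular iff each component is). It now suffices to produce $\mbf v'\in\R^{n+1}$ whose entries generate $\R$ and with $\mbf v'\equiv\bar{\mbf v}\pmod{\mcl I}$: reducing this congruence mod $\mcl Q_i$ gives $\mbf v'\equiv\mbf a^{(i)}\pmod{\mcl Q_i}$ componentwise, so every $2\times2$ minor $v'_sa^{(i)}_t-v'_ta^{(i)}_s$ lies in $\mcl Q_i$, whence $[\mbf v']_{\mcl Q_i}=P_i$ and $[\mbf v']_{\mcl I}\in\mbb{PF}^n_{\mcl I}$ is the required preimage.

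The core is therefore the lifting of a unimodular row from $S=\R/\mcl I$ to $\R$. Over the semilocal ring $S$ a unimodular row can be moved by elementary column operations to one with unit $0$th entry: for each of the finitely many maximal ideals $\mfr M$ of $S$ pick a coordinate at which the row is a unit mod $\mfr M$, and use CRT to choose $\bar t_1,\dots,\bar t_n\in S$ so that $\bar v_0+\sum_{j\ge1}\bar t_j\bar v_j\notin\mfr M$ for every such $\mfr M$ (hence it is a unit $w$); then clear the remaining entries with $w$. Thus $\bar{\mbf v}\,\bar E=(w,0,\dots,0)$ for some $\bar E\in E_{n+1}(S)$, $w\in S^{\times}$. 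Lift $\bar E$ entrywise to $E\in E_{n+1}(\R)\subseteq GL_{n+1}(\R)$ and lift $w$ to $w_0\in\R$; since $w$ is a unit of $\prod R_i$ we have $w_0\notin\mcl M_i$ for all $i$, so $(w_0)+\mcl I$ is contained in no maximal ideal of $\R$, i.e.\ $(w_0)+\mcl I=\R$. Write $1=w_0w_0'+c$ with $w_0'\in\R$, $c\in\mcl I$, and set $\mbf w:=(w_0,c,0,\dots,0)\in\R^{n+1}$ (here $n\ge1$ is used). Then the entries of $\mbf w$ generate $\R$, and $\mbf w\equiv(w,0,\dots,0)\pmod{\mcl I}$, so $\mbf v':=\mbf w\,E^{-1}$ has entries generating $\R$ and reduces mod $\mcl I$ to $(\bar{\mbf v}\,\bar E)\bar E^{-1}=\bar{\mbf v}$, as needed.

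The step I expect to be the main obstacle is precisely this last lifting: a naive CRT lift of $\bar{\mbf v}$ need not have entries generating $\R$, because $\mcl I$ is not contained in the Jacobson radical and $\R$ can have maximal ideals ``invisible'' to $\R/\mcl I$ at which such a lift degenerates. The way around it is to first normalize $\bar{\mbf v}$ to the standard form $(w,0,\dots,0)$ by elementary operations — which lift to $\R$ and preserve the ``entries generate $\R$'' property — and then lift that standard form using the comaximality $(w_0)+\mcl I=\R$ to park an element of $\mcl I$ in a second coordinate; this is exactly where the hypotheses $n\ge1$ and $rad(\mcl Q_i)=\mcl M_i$ are used.
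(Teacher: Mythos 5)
Your argument is correct, but it takes a genuinely different route from the paper's. The paper proceeds by induction on the number $k$ of factors: it collapses the last $k-1$ coordinates into a single element of $\mbb{PF}^{l}_{\mcl{Q}_2\cdots\mcl{Q}_k}$ by the inductive hypothesis, stacks this with the first coordinate as a $2\times(l+1)$ matrix, and uses the $\ti{S}L_{l+1}(\R)$-invariance of the image to normalize the first row to $(1,0,\dots,0)$ and to massage the leading entry of the second row into a unit modulo $\mcl{Q}_2\cdots\mcl{Q}_k$, after which an ordinary coordinatewise Chinese remainder lift finishes the inductive step. You bypass the induction entirely: you identify $\R/\mcl{I}$ with the semilocal product $\prod_i\R/\mcl{Q}_i$ (each factor local precisely because $rad(\mcl{Q}_i)$ is maximal), reformulate surjectivity as the problem of lifting a unimodular row from $\R/\mcl{I}$ to a unimodular row over $\R$, normalize the row to $(w,0,\dots,0)$ with $w$ a unit by elementary column operations over the semilocal quotient, and lift that normal form using $(w_0)+\mcl{I}=\R$ to park an element of $\mcl{I}$ in the second slot. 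Both proofs rest on the same mechanism --- the image is stable under liftable elementary or determinant-$\pm1$ transformations, so it suffices to hit a normal form --- but your version isolates the standard fact that unimodular rows lift along surjections onto semilocal rings, avoids the paper's somewhat delicate choice of the auxiliary element $a\in\mcl{M}_{r+1}\cdots\mcl{M}_k\setminus(\mcl{M}_2\cdots\mcl{M}_r)$, and makes transparent exactly where the hypothesis $rad(\mcl{Q}_i)=\mcl{M}_i$ enters (semilocality of $\R/\mcl{I}$, and the deduction $(w_0)+\mcl{I}=\R$ from $w_0\notin\mcl{M}_i$ for all $i$). One small wording caveat: ``lift $\bar E$ entrywise'' should be read as lifting each elementary factor of $\bar E$ separately, i.e.\ using the surjection $E_{n+1}(\R)\to E_{n+1}(\R/\mcl{I})$ on generators; a literal entrywise lift of the product matrix need not be invertible over $\R$. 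With that reading every step checks out.
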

We also give counter Example~\ref{example:CounterExample}
where the surjectivity does not hold in the case of projective
spaces associated to a product of two prime ideals each of which
cannot be expressed as a finite intersection of ideals whose
radicals are pairwise distinct maximal ideals.


\subsubsection{\bf{The second main result}}
\label{sec:SMR}
~\\
The second main result is a result of strong approximation type.
Here we give a criterion called the $USC$ which is
given in Definition~\ref{defn:UnitalSetCond} and prove the following
surjectivity theorem which is stated as:
\begin{theorem}
\label{theorem:SurjModIdeal} Let $\R$ be a commutative ring with
unity. Let $k\in \mbb{N}$. Let \equ{SL_k(\R)=\{A\in M_{k\times k}(\R) \mid \Det(A)=1\}}
Let $\mcl{I} \sbnq \R$ be an ideal which satisfies the unital
set condition (see Definition~\ref{defn:UnitalSetCond}). Then the reduction map
\equ{SL_k(\R) \lra SL_k(\frac{\R}{\mcl{I}})} is surjective.
\end{theorem}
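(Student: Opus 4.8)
The plan is to lift a given $\bar A\in SL_k(\frac{\R}{\mcl{I}})$ entrywise to $M_k(\R)$ and then repair its determinant without disturbing the reduction modulo $\mcl{I}$. Write $\pi\colon\R\to\frac{\R}{\mcl{I}}$ for the quotient map, extended entrywise to matrices. First I would choose any $A_0\in M_k(\R)$ with $\pi(A_0)=\bar A$; since $\det$ is a polynomial in the entries and commutes with $\pi$, and $\det(\bar A)=1$, we get $\det(A_0)=1+\iota$ for some $\iota\in\mcl{I}$. Any $B\in M_k(\R)$ with $\pi(B)=\bar A$ and $\det(B)=1$ is then the desired preimage, so the entire task is to move $A_0$ inside its coset modulo $M_k(\mcl{I})$ until the determinant becomes exactly $1$.

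The role of the unital set condition (Definition~\ref{defn:UnitalSetCond}) is to upgrade a row of $A_0$ that is unital only modulo $\mcl{I}$ to one that is unital over $\R$. Expanding $\det(\bar A)=1$ along, say, the last row of $\bar A$ exhibits $\{\pi(a_{k1}),\dots,\pi(a_{kk})\}$ as a unital set of $\frac{\R}{\mcl{I}}$; hence $\{a_{k1},\dots,a_{kk}\}\subs\R$ reduces to a unital set, and the unital set condition furnishes $\epsilon_1,\dots,\epsilon_k\in\mcl{I}$ with $\{a_{k1}+\epsilon_1,\dots,a_{kk}+\epsilon_k\}$ generating the unit ideal of $\R$. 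Replacing the last row of $A_0$ by this vector produces $A_1\in M_k(\R)$ with $\pi(A_1)=\bar A$, $\det(A_1)=1+\iota'$ for some $\iota'\in\mcl{I}$, and with last row $r=(b_1,\dots,b_k)$ satisfying $\sum_{j}c_j b_j=1$ for suitable $c_1,\dots,c_k\in\R$.

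Finally I would use the unital row $r$ to correct the determinant. For $k=1$ there is nothing to prove, and for $k=2$ it is immediate: adding to the first row of $A_1$ the vector $(-\iota'c_2,\ \iota'c_1)\in M_{1\times 2}(\mcl{I})$ changes the determinant by $(-\iota'c_2)b_2+(\iota'c_1)(-b_1)=-\iota'(c_1b_1+c_2b_2)=-\iota'$, because the first-row cofactors of $A_1$ are $b_2$ and $-b_1$; this yields a matrix in $SL_2(\R)$ reducing to $\bar A$. For $k\geq 3$ I would run this as an induction on $k$: either continue the direct correction, or, alternatively, use $r$ and further unital sets supplied by Definition~\ref{defn:UnitalSetCond} to reduce $\bar A$, modulo left and right multiplication by elementary matrices over $\frac{\R}{\mcl{I}}$ (which lift verbatim to elementary matrices over $\R$, hence to $SL_k(\R)$), to block form $\mathrm{diag}(1,\bar A'')$ with $\bar A''\in SL_{k-1}(\frac{\R}{\mcl{I}})$, and then apply the inductive hypothesis. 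The step I expect to be the genuine obstacle is exactly this reduction for $k\geq 3$: one must carry out the determinant correction — equivalently, the reduction of a unital row to a standard basis vector — so that every intermediate modification stays in $M_k(\mcl{I})$ while the determinant is driven precisely to $1$. This is the point at which the unital set condition must be used in full strength, most plausibly by applying it once more to the vector of cofactors of $A_1$ along the unital row $r$, so that this cofactor vector becomes a unital set of $\R$; after that, the computation exhibited in the $k=2$ case corrects the determinant.
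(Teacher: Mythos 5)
There are two genuine gaps here, and the first is a misreading of the unital set condition. Definition~\ref{defn:UnitalSetCond} takes as input a set $\{a_1,\dots,a_k\}$ that is already unital \emph{in} $\R$ and returns a single element $j\in(a_2,\dots,a_k)$ such that $a_1+j$ is a unit \emph{modulo} $\mcl{I}$. It does not assert that a row which is unital only modulo $\mcl{I}$ can be perturbed by elements $\epsilon_1,\dots,\epsilon_k\in\mcl{I}$ into a row that generates the unit ideal of $\R$; that is a lifting statement for unimodular rows which does not follow from USC (the paper proves such a statement only for dedekind type domains, as Lemma~\ref{lemma:UnitalDD}, using the arithmetic-progression machinery, not for an arbitrary ring with a USC ideal). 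Since your determinant-correction scheme --- already in the $k=2$ case --- rests entirely on having a row $r=(b_1,\dots,b_k)$ with $\sum_j c_jb_j=1$ holding exactly in $\R$, the argument does not get off the ground under the stated hypothesis: if $\sum_j c_jb_j=1+i_0$ with $i_0\in\mcl{I}$, your correction only improves the determinant from $1+\iota'$ to $1-\iota'i_0\in 1+\mcl{I}^2$, and iterating never terminates in general. The second gap is the one you flag yourself: for $k\geq 3$ the cofactor vector $(C_{k1},\dots,C_{kk})$ along $r$ is again only unital modulo $\mcl{I}$, so solving $\sum_j\epsilon_jC_{kj}=-\iota'$ with $\epsilon_j\in\mcl{I}$ requires $\iota'\in\mcl{I}\cdot(C_{k1},\dots,C_{kk})$, and the relation $(C_{k1},\dots,C_{kk})+\mcl{I}=\R$ only yields $\mcl{I}=\mcl{I}(C_{k1},\dots,C_{kk})+\mcl{I}^2$, which is not enough without a Nakayama-type finiteness input. ``Applying USC once more to the cofactor vector'' runs into the same unjustified lifting problem, compounded by the fact that the cofactors are not free parameters but functions of the remaining rows.

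For contrast, the paper's proof never lifts a unimodular row. Given a row $(a_1,\dots,a_k)$ of an entrywise lift of $\bar A$, it chooses $i\in\mcl{I}$ so that the augmented set $\{a_1,\dots,a_k,i\}$ is unital in $\R$ (possible precisely because the row is unital mod $\mcl{I}$), applies USC to that augmented set, and then discards the $i$-component of the resulting $j$, since only a unit modulo $\mcl{I}$ is needed. This permits elementary column operations --- which lift verbatim --- to reduce $\bar A$ to a diagonal matrix of determinant $1$ over $\R/\mcl{I}$, and the bulk of the paper's proof is then an explicit lift of such diagonal matrices to $SL_k(\R)$ via a symbolic $n$-adic expansion with $n=d_1\cdots d_k-1\in\mcl{I}$. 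To repair your route you would have to either prove the unimodular-row lifting property from USC alone (which I do not believe is possible in this generality) or abandon the determinant-correction idea in favour of the reduction-to-diagonal strategy, in which case you still owe an argument for lifting the diagonal factors.
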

A survey of results on strong approximation can be found
in~\cite{SA}. 


\subsubsection{\bf{The third main result}}
\label{sec:TMR}
~\\
The third main result concerns the surjectivity of
another map from the group $SL_k(\R)$ to a product of $k\operatorname{-}$projective
spaces associated to $k\operatorname{-}$pairwise co-maximal ideals. 
The theorem is stated as:
\begin{theorem}
\label{theorem:SurjSLkHighDimension} Let $\R$ be a commutative ring
with unity. Suppose $\R$ is a Dedekind type domain (refer to
Definition~\ref{defn:Good}). Let $\mathit{M}(\R)$ be the monoid
generated by maximal ideals in $\R$ (refer to Section~\ref{sec:UFMaximalMonoid}). 
Let $k \geq 2$ be a positive integer.
Let $\mcl{I}_1,\mcl{I}_2,\ldots,\mcl{I}_k\in \mathit{M}(\R)$ be $k\operatorname{-}$
pairwise co-maximal proper ideals. Consider 
\equ{SL_k(\R)=\{A=[a_{ij}]_{k \times k} \in M_{k\times k}(\R)
\mid \Det(A)=1\}.} Then the maps \equ{\gs_1,\gs_2: SL_{k}(\R) \lra
\mbb{PF}^{k-1}_{\mcl{I}_1} \times \mbb{PF}^{k-1}_{\mcl{I}_2} \times
\ldots \times \mbb{PF}^{k-1}_{\mcl{I}_k}} given by 
\equa{
\gs_1:(A) &=
([a_{11}:a_{12}:\ldots:a_{1k}],[a_{21}:a_{22}:\ldots:a_{2k}],\ldots,
[a_{k1}:a_{k2}:\ldots:a_{kk}]),\\
\gs_2:(A) &=
([a_{11}:a_{21}:\ldots:a_{k1}],[a_{12}:a_{22}:\ldots:a_{k2}],\ldots,
[a_{1k}:a_{2k}:\ldots:a_{kk}])
}
are surjective.
\end{theorem}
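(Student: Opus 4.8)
The plan is to prove surjectivity of $\gs_1$; the statement for $\gs_2$ then follows by transposing matrices (note $A \mapsto A^{T}$ preserves $SL_k(\R)$ and interchanges the row-map and column-map). So fix an arbitrary target point $([a_{11}:\ldots:a_{1k}],\ldots,[a_{k1}:\ldots:a_{kk}])$ in $\mbb{PF}^{k-1}_{\mcl{I}_1}\times\cdots\times\mbb{PF}^{k-1}_{\mcl{I}_k}$; each tuple $(a_{i1},\ldots,a_{ik})$ is unital in $\R$, and we must produce $A\in SL_k(\R)$ whose $i$-th row is $\sim_{GR}$-equivalent, modulo $\mcl{I}_i$, to the prescribed tuple. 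The first step is to reduce to working modulo the product ideal $\mcl{I}=\mcl{I}_1\mcl{I}_2\cdots\mcl{I}_k$: since the $\mcl{I}_i$ are pairwise comaximal and lie in $\mathit{M}(\R)$, the Chinese Remainder Theorem gives $\R/\mcl{I}\cong\prod_i \R/\mcl{I}_i$, and I would first lift the data to a single matrix $\bar A\in M_k(\R/\mcl{I})$ whose $i$-th row reduces mod $\mcl{I}_i$ to something projectively equivalent to the given tuple and which has $\det \bar A$ a unit in $\R/\mcl{I}$ — indeed one can arrange $\det\bar A=1$ by scaling one entry of each row using that each $\R/\mcl{I}_i$ is a product of local Artinian rings (consequences of Properties 1--3) in which a unital tuple can be rescaled to a unimodular one with any prescribed nonzero determinant. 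This uses the defining flexibility of $\mbb{PF}$: entries need not be units mod $\mcl{I}_i$, so we have room to adjust.

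The second step is the genuine lifting problem: given $\bar A\in SL_k(\R/\mcl{I})$, produce $A\in SL_k(\R)$ reducing to it. This is exactly the surjectivity of $SL_k(\R)\to SL_k(\R/\mcl{I})$, which is Theorem~\ref{theorem:SurjModIdeal}, provided $\mcl{I}$ satisfies the unital set condition (Definition~\ref{defn:UnitalSetCond}). So a key intermediate claim is: \emph{for a dedekind type domain $\R$, every ideal $\mcl{I}\in\mathit{M}(\R)$ (equivalently every finite product of maximal ideals) satisfies the USC.} I expect this to follow from Properties 1--4: such an $\mcl{I}$ is contained in only finitely many maximal ideals $\mcl{M}_1,\ldots,\mcl{M}_r$ (Property 4), its radical is $\bigcap_j \mcl{M}_j$, and outside these finitely many maximal ideals $\mcl{I}$ is locally the whole ring, so one can build the required unital witnesses by a CRT/approximation argument exactly mirroring the classical case of Dedekind domains. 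Once this claim is in hand, Step 2 is immediate from Theorem~\ref{theorem:SurjModIdeal}, and combining with Step 1 finishes $\gs_1$.

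The main obstacle I anticipate is Step 1, specifically checking that an arbitrary target really lifts to a matrix over $\R/\mcl{I}$ with determinant exactly $1$ (not merely a unit), while keeping each row in the prescribed projective class. Over a general $\R/\mcl{I}_i$ — a finite product of local Artinian principal-type rings — one needs: (a) a unital $k$-tuple can be completed to an invertible matrix, and (b) the determinant of such a completion can be normalized to $1$ by modifying entries within the same $\sim_{GR}$ class. Point (a) is the statement that a unimodular row extends to an invertible matrix, which holds over semilocal rings; point (b) exploits that multiplying a unital row by a unit $u$ and compensating in the determinant is allowed because the two rows $(a_{i1},\ldots,a_{ik})$ and $(ua_{i1},\ldots,ua_{ik})$ are $\sim_{GR}$-equivalent. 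I would handle this factor-by-factor over each local piece of each $\R/\mcl{I}_i$, then reassemble via CRT. The remaining steps — transposition for $\gs_2$, and the reduction of the whole problem to the $\R/\mcl{I}$ level — are formal.
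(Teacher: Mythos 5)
Your proposal is correct, and it takes a genuinely different route from the paper. The paper proves surjectivity of $\gs_1$ by an explicit Gaussian-elimination-style induction inside $SL_k(\R)$: it first notes the image is a union of $SL_k(\R)$-orbits, then normalizes the rows one at a time to $e_1^k, e_2^k,\ldots$, using the representation Theorem~\ref{theorem:RepHigherDimElement} (which writes each projective coordinate as $\Gs(\mcl{J})v$ via the multiplicative choice map $\Gs$ of Theorem~\ref{theorem:Comaximality}) together with Theorem~\ref{theorem:Unital}; the point of that machinery is to arrange that the column operations used to fix row $j$ only perturb rows $1,\ldots,j-1$ by elements of $\mcl{I}_1\cdots\mcl{I}_{j-1}$, hence do not change them as projective elements. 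Your proof decouples the rows from the outset via $SL_k(\R/\mcl{I})\cong\prod_i SL_k(\R/\mcl{I}_i)$, solves a completely unconstrained completion problem in each factor (a unital row over the semilocal ring $\R/\mcl{I}_i=\prod_j\R/\mcl{M}_j^{t_j}$ has a unit entry in each local factor, hence completes to an invertible matrix, and the determinant is normalized by scaling one of the \emph{free} rows, or the constrained row by a unit --- which preserves its $\sim_{GR}$-class), and then lifts by Theorem~\ref{theorem:SurjModIdeal}. Both of your supporting inputs are already in the paper: the USC for $\mcl{I}=\prod_i\mcl{I}_i$ is exactly Theorem~\ref{theorem:Unital}/Corollary~\ref{cor:SurjModIdeal} (any element of $\mathit{M}(\R)$ lies in only finitely many maximal ideals), and there is no circularity since Theorem~\ref{theorem:SurjModIdeal} is proved independently. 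In effect your argument is a CRT-assembled version of the paper's own Theorem~\ref{theorem:ElementaryRowVector}, applied one row per factor; it buys brevity and avoids the $\Gs$-representation machinery entirely, at the modest cost of having to state and prove unimodular-row completion over the local Artinian quotients, whereas the paper's route stays constructive within $SL_k(\R)$ and exercises the representation theorem it develops for other purposes.
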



This article leads to the following three open questions.
\subsubsection{\bf{The first open question}}
~\\
The first question is concerning surjectivity of the map for subgroups of $SL_k(\R)$. It is stated as:
\begin{ques}
\label{question:Subgroups} Let $\R$ be a commutative ring with unity.
Suppose $\R$ is a Dedekind type domain (refer to Definition~\ref{defn:Good}).
Let $\mathit{M}(\R)$ be the monoid generated by maximal ideals in
$\R$ (refer to Section~\ref{sec:UFMaximalMonoid}). Let $k \geq 2$ be a positive
integer. Let $\mcl{I}_1,\mcl{I}_2,\ldots,\mcl{I}_k\in \mathit{M}(\R)$ be
$k\operatorname{-}$ pairwise co-maximal proper ideals. Let $G_k(\R) \subs SL_k(\R)$ be a subgroup. Under what
conditions on $G_k(\R)$ are the maps \equ{\gs_1,\gs_2: G_k(\R) \lra
\mbb{PF}^{k-1}_{\mcl{I}_1} \times \mbb{PF}^{k-1}_{\mcl{I}_2} \times
\ldots \times \mbb{PF}^{k-1}_{\mcl{I}_k}} given by 
\equa{
\gs_1:(A)&=
([a_{11}:a_{12}:\ldots:a_{1k}],[a_{21}:a_{22}:\ldots:a_{2k}],\ldots,
[a_{k1}:a_{k2}:\ldots:a_{kk}]),\\
\gs_2:(A) &=
([a_{11}:a_{21}:\ldots:a_{k1}],[a_{12}:a_{22}:\ldots:a_{k2}],\ldots,
[a_{1k}:a_{2k}:\ldots:a_{kk}])
}
surjective?
\end{ques}


\subsubsection{\bf{The second open question}}
~\\
The second question is concerning surjectivity of the map, where the
equation is different from the defining equation of $SL_k(\R) \subs
M_{k \times k}(\R)$. Before stating the following open question, we
mention that we prove another surjectivity
Theorem~\ref{theorem:SurjSumProductEquation} for the sum-product
equation (refer to equation~\ref{Eq:SumProduct}) in Section~\ref{sec:SumProductEquation}. 
Now we state the question concerning general varieties in a slightly general context:
\begin{ques}
\label{question:Variety} Let $\R$ be a commutative ring with unity.
Suppose $\R$ is a Dedekind type domain (refer to Definition~\ref{defn:Good}).
Let $\mathit{M}(\R)$ be the monoid generated by maximal ideals in
$\R$. Let $k \geq 2$ be a positive
integer. Let $\mcl{I}_1,\mcl{I}_2,\ldots,\mcl{I}_k\in \mathit{M}(\R)$ be
$k\operatorname{-}$ pairwise co-maximal proper ideals.  Let $M_{k\times k}(\R)$ be the set of $k \times k$ matrices
with entries in $\R$. Let $f:M_{k\times k}(\R) \lra \R$ be a polynomial
function in the entries. Suppose $f(g=[g_{ij}]_{k \times k})=0$
implies each row of $g$ is unital. Let $V_f(\R)=\{x=[x_{ij}] \in
M_{k\times k}(\R)\mid \text{ such that
}f(x_{11},x_{12},\ldots,x_{kk})=0\}$. For what type of equations $f=0$ satisfying such a condition, is
the map \equ{\gs_1: V_f(\R) \lra \mbb{PF}^{k-1}_{\mcl{I}_1} \times
\mbb{PF}^{k-1}_{\mcl{I}_2} \times \ldots \times
\mbb{PF}^{k-1}_{\mcl{I}_k}} given by 
\equ{
\gs_1:(A) =
([a_{11}:a_{12}:\ldots:a_{1k}],[a_{21}:a_{22}:\ldots:a_{2k}],\ldots,
[a_{k1}:a_{k2}:\ldots:a_{kk}])
} surjective?
\end{ques}


\subsubsection{\bf{The third open question}}
~\\
The third question is the following.
\begin{ques}
\label{question:ProjectiveSpace}
Classify geometrically defined spaces which are actually full projective
spaces associated to an ideal of a ring.
\end{ques}
Here below we remark on the projective space associated to an ideal as an
application of Chinese remainder reduction isomorphism.
\begin{remark}
\label{remark:WhatSpaces}
This remark concerns the question as to what spaces can be
considered as projective spaces associated to ideals.  We motivate this via some examples.

Let $\mbb{K}$ be an algebraically closed field. Then we know,
via segre embedding, the space $(\mbb{PF}^k_{\mbb{C}})^n=\mbb{PF}^k_{\mbb{C}} \times \mbb{PF}^k_{\mbb{C}} \times \ldots \times \mbb{PF}^k_{\mbb{C}}$
is a projective algebraic variety in a suitable high dimensional
projective space. However it is also a projective space associated to
an ideal. Suppose $\R$ is a commutative ring with unity and
$\mcl{M}_1,\mcl{M}_2,\ldots,\mcl{M}_n$ are distinct ideals all whose
quotients are isomorphic to $\mbb{C}$ then $(\mbb{PF}^k_{\mbb{C}})^n
= \mbb{PF}^k_{\mcl{I}},$ where
$\mcl{I}=\us{i=1}{\os{n}{\prod}}\mcl{M}_i$ via Chinese remainder reduction
isomorphism. In some cases the fields need not be the same. If
$\mbb{K}_1,\mbb{K}_2,\ldots,\mbb{K}_r$ are $r\operatorname{-}$fields and if
$\mcl{M}_1,\mcl{M}_2,\ldots,\mcl{M}_r$ are pairwise co-maximal ideals
in $\R$ with $\frac {\R}{\mcl{M}_i}=\mbb{K}_i$ then
$\us{i=1}{\os{r}{\prod}}\mbb{PF}^k_{\mbb{K}_i}\cong
\mbb{PF}^k_{\mcl{J}},$ where
$\mcl{J}=\us{i=1}{\os{r}{\prod}}\mcl{M}_i$ via Chinese remainder reduction
isomorphism. For example \equ{\mbb{PF}^2_{\mbb{R}} \times
\mbb{PF}^2_{\mbb{C}} \cong \mbb{PF}^2_{(x(x^2+1))},} where
$\R=\mbb{R}[x],\mcl{M}_1=(x),\mcl{M}_2=(x^2+1)$.
\end{remark}


\section{\bf{Preliminaries}}
\label{sec:Preliminaries}
~\\
This section contains preliminaries which are needed in the proofs of the main results.


\subsection{\bf{Projective spaces associated to ideals in arbitrary commutative rings with
identity}} \label{sec:ProjSpace} 

In this section we prove the well-definedness and existence of $k\operatorname{-}$dimensional
projective spaces.
\begin{lemma}
\label{lemma:GEQRE} Using the notation in
Definition~\ref{defn:GEQRE}, the relation $\sim_{GR}$ is an
equivalence relation.
\end{lemma}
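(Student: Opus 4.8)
The plan is to verify the three standard properties of an equivalence relation directly from the defining condition. Recall that $(a_0,\ldots,a_k) \sim_{GR} (b_0,\ldots,b_k)$ means $a_ib_j - a_jb_i \in \mcl{I}$ for all $0 \leq i < j \leq k$, subject to both tuples being unital. Reflexivity is immediate: $a_ia_j - a_ja_i = 0 \in \mcl{I}$. Symmetry is equally immediate, since $b_ia_j - b_ja_i = -(a_ib_j - a_jb_i) \in \mcl{I}$. The only step with genuine content is transitivity, and this is where I expect the main obstacle to lie: given $a_ib_j - a_jb_i \in \mcl{I}$ and $b_ic_j - b_jc_i \in \mcl{I}$ for all $i<j$, I must show $a_ic_j - a_jc_i \in \mcl{I}$.

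For transitivity, the key algebraic identity I would use is a Pl\"ucker-type relation: for any indices $i,j$ and any auxiliary index $m$,
\equ{b_m(a_ic_j - a_jc_i) = a_i(b_mc_j - b_jc_m) - a_j(b_mc_i - b_ic_m) + c_m(a_ib_j - a_jb_i) + \big(\text{terms that telescope}\big).}
More carefully, the clean identity is
\equ{b_m(a_ic_j - a_jc_i) = (a_ib_m - a_mb_i)c_j - (a_jb_m - a_mb_j)c_i + a_m(b_ic_j - b_jc_i),}
which one checks by expanding: the right side equals $a_ib_mc_j - a_mb_ic_j - a_jb_mc_i + a_mb_jc_i + a_mb_ic_j - a_mb_jc_i = a_ib_mc_j - a_jb_mc_i = b_m(a_ic_j - a_jc_i)$. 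Every term on the right lies in $\mcl{I}$ by hypothesis (using $a_ib_m - a_mb_i \in \mcl{I}$, noting this holds for all pairs regardless of order by symmetry, and $b_ic_j - b_jc_i \in \mcl{I}$), hence $b_m(a_ic_j - a_jc_i) \in \mcl{I}$ for every $m$. Since $\{b_0,b_1,\ldots,b_k\}$ generates the unit ideal $\R$, we can write $1 = \sum_m r_m b_m$ for some $r_m \in \R$, and therefore $a_ic_j - a_jc_i = \sum_m r_m\, b_m(a_ic_j - a_jc_i) \in \mcl{I}$, as desired. This is the crux: the unital hypothesis on the middle tuple is exactly what is needed to cancel the spurious factor $b_m$.

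I would present the proof in this order: first dispatch reflexivity and symmetry in one line each, then state and verify the polynomial identity above, then invoke the unital property of the middle tuple to conclude transitivity. One small point to be careful about is that the defining relation is only stated for $i<j$, so when I write $a_ib_m - a_mb_i$ for an arbitrary auxiliary index $m$ (which may be less than, equal to, or greater than $i$), I should note that membership in $\mcl{I}$ holds in all cases: for $m = i$ it is zero, and for $m \neq i$ it is $\pm(a_{\min}b_{\max} - a_{\max}b_{\min})$ up to sign, hence in $\mcl{I}$. With that observation, the argument goes through cleanly and no serious obstacle remains; the identity is the whole game.
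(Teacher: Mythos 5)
Your proof is correct, and it takes a genuinely different route from the paper's. The paper exploits the standing hypothesis that $\mcl{I}\in\mcl{INTRAD}(\R)^{*}$: it first reduces to the case where $\mathrm{rad}(\mcl{I})$ is a single maximal ideal $\mcl{M}$, observes that some coordinate $a_i\notin\mcl{M}$ is then a unit modulo $\mcl{I}$, normalizes it to $1$, and verifies transitivity (in the equivalent form ``$a\sim b$ and $a\sim c$ imply $b\sim c$'') by a short chain of congruences; the general case follows by intersecting over the ideals with distinct maximal radicals. You instead prove transitivity in its standard form via the Pl\"ucker-type identity
\begin{equation*}
b_m(a_ic_j-a_jc_i)=(a_ib_m-a_mb_i)c_j-(a_jb_m-a_mb_j)c_i+a_m(b_ic_j-b_jc_i),
\end{equation*}
which is correct as you expand it, and then kill the factor $b_m$ using $1=\sum_m r_mb_m$ from the unital hypothesis on the middle tuple; your remark about the sign when $m$ is not larger than $i$ disposes of the only bookkeeping issue. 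What your argument buys is generality: it makes no use of the structure of $\mcl{I}$ at all, so $\sim_{GR}$ is an equivalence relation for an arbitrary ideal of an arbitrary commutative ring with unity. This is strictly stronger than the lemma as stated, and it in fact explains why the paper can later (in the counterexample of Section~\ref{sec:CounterExample}) assert without further proof that $\sim_{\mcl{P}_1}$, $\sim_{\mcl{P}_2}$, $\sim_{\mcl{P}_1\mcl{P}_2}$ are equivalence relations for prime ideals of $\mbb{K}[x,y]$ that do not belong to $\mcl{INTRAD}(\R)$. The paper's approach, by contrast, is tied to the existence of a unit coordinate modulo $\mcl{I}$, which is exactly what the radical-maximal hypothesis supplies.
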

\begin{proof}
The relation is reflexive and symmetric. We need to prove transitivity. Suppose
$(a_0,a_1,a_2,\ldots,a_k)$,$(b_0,b_1,b_2,\ldots,b_k)$,$(c_0,c_1,c_2,\ldots,c_k)
\in \R^{k+1}$ and each of the sets \linebreak
$\{a_0,a_1,a_2,\ldots,a_k\}$,$\{b_0,b_1,b_2,\ldots,b_k\}$,$\{c_0,c_1,c_2,\ldots,c_k\}$
generate the unit ideal $\R$. First consider the case when $0\neq \mcl{I}
\in \mcl{INTRAD}(\R)$ is an ideal whose radical is a maximal ideal
$\mcl{M}$. Suppose $(a_i:0 \leq i \leq k)\sim_{GR}(b_i:0 \leq i \leq k), 
(a_i:0 \leq i \leq k)\sim_{GR}(c_i:0 \leq i \leq k)$. Suppose
without loss of generality $a_1 \nin \mcl{M}$. So $a_1$ is a unit
$\mod \mcl{I}$. We assume $a_1=1$. Now for any $0 \leq i < j \leq k$
we have $b_ic_j=a_1b_ic_j \equiv b_1a_ic_j \equiv
b_1c_ia_j=b_1a_jc_i\equiv a_1b_jc_i=b_jc_i \mod \mcl{I}$. Hence the
transitivity follows for $\mcl{I}$. Since every ideal $0\neq \mcl{I} \in
\mcl{INTRAD}(\R)$ is an intersection of ideals with distinct radical
maximal ideals, Lemma~\ref{lemma:GEQRE} follows for any nonzero
ideal $\mcl{I} \in \mcl{INTRAD}(\R)$.
\end{proof}
This proves the existence and if $\frac {\R}{\mcl{I}}$ is a finite ring then the space
$\mbb{PF}^k_{\mcl{I}}$ is a finite projective space.


\subsection{\bf{On arithmetic progressions}}
\label{sec:FundLemma}
In this section we prove a very useful lemma on arithmetic
progressions for integers, Dedekind type domains and a proposition in the context of schemes.
Remark~\ref{remark:FundLemma} below summarizes these two 
Lemmas~[\ref{lemma:FundLemmaIntegers},~\ref{lemma:FundLemma}] and 
Proposition~\ref{prop:FundLemmaSchemes} in this section. 

\begin{lemma}[A lemma on arithmetic progressions for integers]
\label{lemma:FundLemmaIntegers}
~\\
Let $a,b \in \Z$ be integers with $(a)+(b)=1$. Consider the set
$\{a+nb \mid n \in \mbb{Z}\}$. Let $m \in \mbb{Z}$ be any non-zero
integer. Then there exists an $n_0 \in \mbb{Z}$ and an element of
the form $a+n_0b$ such that $gcd(a+n_0b,m)=1$.
\end{lemma}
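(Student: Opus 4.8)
The plan is to reduce the statement to the Chinese Remainder Theorem by analyzing the finitely many primes dividing $m$. First I would factor $m = \pm p_1^{e_1} p_2^{e_2} \cdots p_r^{e_r}$ into its distinct prime divisors; since $m \neq 0$ this is a finite list. The goal is to choose $n_0$ so that $a + n_0 b$ is not divisible by any $p_i$. Partition the primes $p_i$ into two types: those dividing $b$ and those not dividing $b$. If $p_i \mid b$, then for every $n$ we have $a + nb \equiv a \pmod{p_i}$; and since $(a)+(b) = 1$, the prime $p_i$ cannot divide $a$ (it would then divide $1$), so $a + nb$ is automatically coprime to $p_i$ regardless of the choice of $n$. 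Thus only the primes $p_i \nmid b$ impose a real constraint.

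For each prime $p_i$ with $p_i \nmid b$, the residue $b$ is invertible modulo $p_i$, so the congruence $a + nb \equiv 1 \pmod{p_i}$ (say) has a solution $n \equiv (1 - a) b^{-1} \pmod{p_i}$; in particular one can force $a + nb \not\equiv 0 \pmod{p_i}$ by choosing $n$ in a suitable nonzero residue class mod $p_i$ (as long as $p_i > 1$, which it is). Since the primes $p_i$ appearing here are pairwise distinct, by the Chinese Remainder Theorem there is a single integer $n_0 \in \Z$ simultaneously satisfying all these finitely many congruence conditions, i.e. $p_i \nmid a + n_0 b$ for every relevant $i$. Combined with the automatic coprimality at the primes dividing $b$, this gives $\gcd(a + n_0 b, m) = 1$, as desired.

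I expect the only mild subtlety — not really an obstacle — to be the bookkeeping of the two cases ($p_i \mid b$ versus $p_i \nmid b$) and making explicit that the case $p_i \mid b$ uses the hypothesis $(a) + (b) = 1$ in an essential way, whereas the case $p_i \nmid b$ uses invertibility of $b$ mod $p_i$ plus CRT. One should also note the degenerate-looking case where $b = 0$: then $(a) = 1$ forces $a = \pm 1$, and any $n_0$ works since $a + n_0 b = \pm 1$. No genuine difficulty arises; the proof is essentially CRT applied prime-by-prime to the arithmetic progression $\{a + nb : n \in \Z\}$.
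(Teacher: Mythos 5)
Your proposal is correct and follows essentially the same route as the paper: split the prime divisors of $m$ according to whether they divide $b$ (where $(a)+(b)=1$ makes $a+nb$ automatically coprime to such primes) or not (where exactly one residue class of $n$ modulo the prime must be avoided), and then combine the finitely many congruence conditions by the Chinese Remainder Theorem. No gaps.
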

\begin{proof}
Assume $a,b$ are both non-zero. Otherwise 
Lemma~\ref{lemma:FundLemmaIntegers} is trivial. Let
$q_1,q_2,q_3,\ldots,q_t$ be the distinct prime factors of $m$.
Suppose $q \mid gcd(m,b)$ then $q \nmid a+nb$ for all $n \in
\mbb{Z}$. Such prime factors $q$ need not be considered. Let $q \mid
m, q \nmid b$. Then there exists $t_q \in \mbb{Z}$ such that the
exact set of elements in the given arithmetic progression divisible
by $q$ is given by \equ{\ldots, a+(t_q-2q)b, a+(t_q-q)b, a+t_qb,
a+(t_q+q)b,a+(t_q+2q)b \ldots} Since there are finitely many such
prime factors for $m$ which do not divide $b$ we get a set of
congruence conditions for the multiples of $b$ as $n \equiv t_q \mod\ q$. In order to get an $n_0$ we solve a different set of
congruence conditions for each such prime factor say for example $n
\equiv t_q+1 \mod\ q$. By Chinese Remainder Theorem we have such
solutions $n_0$ for $n$ which therefore satisfy $gcd(a+n_0b,m)=1$.
\end{proof}

\begin{lemma}[A lemma on arithmetic progressions for Dedekind type domains]
\label{lemma:FundLemma} Let $\mcl{O}$ be a Dedekind type domain. Let
$a,b \in \mcl{O}$ such that sum of the ideals $(a)+(b)=\mcl{O}$.
Consider the set $\mcl{A}=\{a+nb \mid n \in \mcl{O}\}$. Let $\mcl{M}
\subs \mcl{O}$ be any nonzero ideal. Then there exists an $n_0 \in
\mcl{O}$ and an element $a+n_0b \in \mcl{A}$ such that the sum of
the ideals $(a+n_0b)+\mcl{M}=\mcl{O}$.
\end{lemma}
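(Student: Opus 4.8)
<br>

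The plan is to reduce the statement for a general dedekind type domain to the structural facts about finitely many maximal ideals, mimicking the integer case (Lemma~\ref{lemma:FundLemmaIntegers}) but replacing ``prime factors of $m$'' with ``maximal ideals containing $\mcl{M}$.'' First I would dispose of trivial cases: if $a = 0$ then $(b) = \mcl{O}$, so $b$ is a unit and $a + n_0 b$ is a unit for a suitable $n_0$; if $b = 0$ then $(a) = \mcl{O}$ and $n_0 = 0$ works; and if $\mcl{O}$ is a field every nonzero element is a unit. So assume $a, b \neq 0$ and $\mcl{O}$ is not a field. Since $\mcl{O}$ is a dedekind type domain it satisfies Property~4, so the nonzero element $b$ (equivalently $ab$, or the nonzero ideal $\mcl{M}$ via any nonzero element of it) lies in only finitely many maximal ideals; let $\mcl{M}_1, \dots, \mcl{M}_t$ be the finitely many maximal ideals that contain $\mcl{M}$.

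Next I would classify these $\mcl{M}_i$ into two types, exactly as in the integer proof. If $b \in \mcl{M}_i$, then since $(a) + (b) = \mcl{O}$ we have $a \notin \mcl{M}_i$, hence $a + nb \notin \mcl{M}_i$ for every $n$; such $\mcl{M}_i$ need no attention. If $b \notin \mcl{M}_i$, then reduction modulo $\mcl{M}_i$ sends $b$ to a nonzero, hence invertible, element of the field $\mcl{O}/\mcl{M}_i$, so there is a unique residue class $t_i \in \mcl{O}/\mcl{M}_i$ with $a + t_i b \equiv 0 \pmod{\mcl{M}_i}$; for any other residue class of $n$ modulo $\mcl{M}_i$ we get $a + nb \notin \mcl{M}_i$. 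Because $\mcl{O}/\mcl{M}_i$ has at least two elements (it is a field, so $|\mcl{O}/\mcl{M}_i| \geq 2$), I can choose a residue $s_i \in \mcl{O}/\mcl{M}_i$ with $s_i \neq t_i$.

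Then I would invoke the Chinese Remainder Theorem for the pairwise comaximal ideals $\mcl{M}_i$ (over the $b \notin \mcl{M}_i$ indices) to produce $n_0 \in \mcl{O}$ with $n_0 \equiv s_i \pmod{\mcl{M}_i}$ for each such $i$. For this $n_0$: for indices with $b \in \mcl{M}_i$ we already have $a + n_0 b \notin \mcl{M}_i$; for indices with $b \notin \mcl{M}_i$ we have $n_0 \not\equiv t_i$, hence $a + n_0 b \notin \mcl{M}_i$. Thus $a + n_0 b$ lies in none of the maximal ideals containing $\mcl{M}$. Finally I would argue $(a + n_0 b) + \mcl{M} = \mcl{O}$: if not, this sum is a proper ideal, hence contained in some maximal ideal $\mcl{M}'$; then $\mcl{M} \subseteq \mcl{M}'$ forces $\mcl{M}' \in \{\mcl{M}_1, \dots, \mcl{M}_t\}$, but $a + n_0 b \in \mcl{M}'$ contradicts the previous step. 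Hence $(a + n_0 b) + \mcl{M} = \mcl{O}$, as desired.

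The only place that needs genuine care — the main obstacle — is making sure the set of relevant maximal ideals is genuinely finite and that Property~4 applies: Property~4 is stated for the nonzero element $b$ (or for any fixed nonzero element of $\mcl{M}$), and one must note that any maximal ideal containing the nonzero ideal $\mcl{M}$ also contains that chosen nonzero element of $\mcl{M}$, so finiteness transfers to the family of maximal ideals over $\mcl{M}$. Everything else is the CRT bookkeeping, plus the small but essential observation that each residue field has more than one element so that a ``wrong'' residue $s_i \neq t_i$ exists; for a field $\mcl{O}$ the statement is immediate and handled as a base case.
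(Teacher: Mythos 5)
Your proposal is correct and follows essentially the same route as the paper's proof: reduce to the finitely many maximal ideals containing $\mcl{M}$ (via Property 4 applied to a nonzero element of $\mcl{M}$), discard those containing $b$ since comaximality of $(a)$ and $(b)$ forces $a+nb$ outside them, observe that for the rest the bad values of $n$ form a single residue class modulo that maximal ideal, and use the Chinese Remainder Theorem to pick $n_0$ in a different class for each. Your version is in fact slightly more careful than the paper's, which silently uses $t_{\mcl{Q}}+1$ as the avoiding residue (implicitly the observation that each residue field has at least two elements, which you make explicit) and does not spell out the trivial cases or the transfer of Property 4 from elements to the ideal $\mcl{M}$.
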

\begin{proof}
Assume $a,b$ are both non-zero as otherwise 
Lemma~\ref{lemma:FundLemma} is trivial. Let the ideal
$\mcl{M}$ be contained in finitely many maximal ideals 
$\mcl{Q}_1,\mcl{Q}_2,\ldots \mcl{Q}_t$. Suppose $\mcl{Q} \in
\{\mcl{Q}_1,\mcl{Q}_2,\ldots,\mcl{Q}_t\}$ and $\mcl{Q} \sups
\mcl{M}+(b)$ then $a+nb \nin \mcl{Q}$ for all $n \in \mcl{O}$
because otherwise both $a,b \in \mcl{Q}$ which is a contradiction.
Such prime ideals $\mcl{Q}$ need not be considered.

Let $\mcl{M} \subs \mcl{Q}$ and  $b \nin \mcl{Q}$ then there exists
$t_{\mcl{Q}} \in \mcl{O}$ such that \equ{\{t \mid a+tb \in
\mcl{Q}\}=t_\mcl{Q}+\mcl{Q}} an arithmetic progression. This can be
proved as follows. Since $b \nin \mcl{Q}$ we have
$(b)+\mcl{Q}=\mcl{O}$. So there exists $t_{\mcl{Q}}$ such that
$a+t_{\mcl{Q}}b \in \mcl{Q}$. If $a+tb \in \mcl{Q}$ then
$(t-t_{\mcl{Q}})b \in \mcl{Q}$. So $t \in t_{\mcl{Q}}+\mcl{Q}$.

Since there are finitely many such maximal ideals $\mcl{Q}$ containing $\mcl{M}$ 
such that $b \nin \mcl{Q}$ we get a set
of congruence conditions for the multiples of $b$ as $n \equiv
t_{\mcl{Q}} \mod\ \mcl{Q}$. In order to get an $n_0$ we solve a
different set of congruence conditions for each such maximal ideal
say for example $n \equiv t_{\mcl{Q}}+1 \mod\ \mcl{Q}$. By
Chinese Remainder Theorem we have such solutions $n_0$ for $n$ which
therefore satisfy $a+n_0b \nin \mcl{Q}$ for all maximal ideals 
$\mcl{Q} \in  \{\mcl{Q}_1,\mcl{Q}_2,\ldots,\mcl{Q}_t\}$ and hence
the sum of the ideals $(a+n_0b)+\mcl{M}=\mcl{O}$.

This proves the Lemma~\ref{lemma:FundLemma} on
arithmetic progressions for Dedekind type domains.
\end{proof}

\begin{prop}
\label{prop:FundLemmaSchemes} Let $X$ be a scheme. Let $Y \subs
X$ be an affine sub-scheme. Let $f,g \in \mcl{O}(Y)$ be two regular
functions on $Y$ such that the unit regular function
$\mathbbm{1}_{Y} \in (f,g)\subs \mcl{O}(Y)$. Let $E\subs Y$ be any
finite set of closed points. Then there exists a regular function $a
\in \mcl{O}(Y)$ such that $f+ag$ is a non-zero element in the
residue field
$k(\mcl{M})=\frac{\mcl{O}(Y)_{\mcl{M}}}{\mcl{M}_{\mcl{M}}}=\frac{\mcl{O}(Y)}{\mcl{M}}$
at every $\mcl{M}\in E$.
\end{prop}
\begin{proof}
Let the set of closed points be given by
$E=\{\mcl{M}_1,\mcl{M}_2,\ldots,\mcl{M}_t\}$. If $g$ vanishes in the
residue field at $\mcl{M}_i$ then for all regular functions $a\in
\mcl{O}(Y),f+ag$ does not vanish in the residue field at
$\mcl{M}_i$. Otherwise both $f,g \in \mcl{M}_i$ which is a
contradiction to $\mathbbm{1}_{Y} \in (f,g)$.

Now consider the finitely many maximal ideals $\mcl{M}\in E$ such
that $g \nin \mcl{M}$. Then there exists $t_{\mcl{M}}$ such that the
set \equ{\{t \mid f+tg \in \mcl{M}\}=t_{\mcl{M}}+\mcl{M}} a complete
arithmetic progression. This can be proved as follows. Since 
$g \nin \mcl{M}$ we have $(g)+\mcl{M}=(\mathbbm{1}_{Y})$. So
there exists $t_{\mcl{M}}$ such that $f+t_{\mcl{M}}g \in \mcl{M}$.
Now if $f+tg \in \mcl{M}$ then $(t-t_{\mcl{M}})g \in \mcl{M}$. Hence
$t \in t_{\mcl{M}}+\mcl{M}$.

Since there are finitely such maximal ideals $\mcl{M}$ such that $g
\nin \mcl{M}$ in the set $E$ we get a finite set of congruence
conditions for the multiples $a$ of $g$ as $a \equiv t_{\mcl{M}}
\mod\ \mcl{M}$. In order to get an $a_0$ we solve a different set of
congruence conditions for each such maximal ideal in $E$ say for
example $a \equiv t_{\mcl{M}}+1 \mod\ \mcl{M}$. By Chinese Remainder
Theorem we have such solutions $a_0$ for $a$ which therefore satisfy
$f+a_0g \nin \mcl{M}$ for all maximal ideals $\mcl{M} \in E$ and
hence the regular function $f+a_0g$ does not vanish in the residue
field $k(\mcl{M})$ for every $\mcl{M} \in E$. This proves 
Proposition~\ref{prop:FundLemmaSchemes}.
\end{proof}
\begin{remark}
\label{remark:FundLemma}
If $a,b\in \Z,gcd(a,b)=1$ then there exist $x,y\in \Z$ such that $ax+by=1$.
Here we note that in general $x$ need not be one unless $a\equiv 1\mod\ b$.
However for any non-zero integer $m$ we can always choose $x=1$ to find
an integer $a+by$ such that $gcd(a+by,m)=1$. In the context of schemes this observation
gives rise to regular functions which do vanish at a given finite set of closed points.
\end{remark}


\subsection{\bf{Ideal avoidance}}
\label{sec:IdealAvoidance}
In this section first we prove below the order prescription
Lemma~\ref{lemma:OrderPrescriptionLemma} before stating Proposition~\ref{prop:IdealAvoidance}
on ideal avoidance. Remark~\ref{remark:functionorderofvanishing} below summarizes
Lemma~\ref{lemma:OrderPrescriptionLemma} and Proposition~\ref{prop:IdealAvoidance}.
\begin{lemma}[Order prescription lemma]
\label{lemma:OrderPrescriptionLemma} Let $\R$ be a commutative ring
with unity. Let $\mcl{I}\subs \R$ be an ideal. Let $\{\mcl{M}_i:1 \leq i \leq t\}$ be a finite non-empty set of
maximal ideals. For each $1 \leq i \leq t$ let $\mcl{M}_i^{m_i}
\sups \mcl{I}$ but $\mcl{M}_i^{m_i+1} \nsupseteq \mcl{I}$ for some non-negative integer $m_i$. 
Then there exists a function $f \in \mcl{I}$ such that $f\in \mcl{I}\bs
\us{i=1}{\os{t}{\bigcup}} \mcl{I}\mcl{M}_i$. In particular \equ{f
\in \bigg(\mcl{M}_i^{m_i}\bs \mcl{M}_i^{m_i+1}\bigg)\cap \mcl{I}\text{ for }1
\leq i \leq t.}
\end{lemma}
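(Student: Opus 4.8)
The plan is to realize the desired $f$ by the standard prime-avoidance mechanism, but carried out with the help of the Chinese Remainder Theorem so that we can control the order at each $\mcl{M}_i$ simultaneously. First I would observe that for each fixed $i$ the hypotheses $\mcl{M}_i^{m_i}\sups\mcl{I}$ and $\mcl{M}_i^{m_i+1}\nsupseteq\mcl{I}$ say precisely that $\mcl{I}\nsubseteq\mcl{I}\mcl{M}_i$: indeed $\mcl{I}\mcl{M}_i\subseteq\mcl{M}_i^{m_i+1}$, so if $\mcl{I}\subseteq\mcl{I}\mcl{M}_i$ we would get $\mcl{I}\subseteq\mcl{M}_i^{m_i+1}$, contradicting the second hypothesis. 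Hence for each $i$ we may pick $g_i\in\mcl{I}\setminus\mcl{I}\mcl{M}_i$. Note that $g_i\in\mcl{I}\subseteq\mcl{M}_i^{m_i}$ automatically, while $g_i\notin\mcl{I}\mcl{M}_i$ forces $g_i\notin\mcl{M}_i^{m_i+1}$ (since $\mcl{M}_i^{m_i+1}\cap\mcl{I}\subseteq$... actually more directly: were $g_i\in\mcl{M}_i^{m_i+1}$, combining with $g_i\in\mcl{I}$ we would still need $g_i\in\mcl{I}\mcl{M}_i$ — this is the one place to be careful, and I would phrase it instead via the quotient module argument below).

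The cleaner route, which I would actually write up, is to work in the finitely generated $\R$-module $\mcl{I}$ and use that the $\mcl{M}_i$ are pairwise comaximal (being distinct maximal ideals). Set $\overline{\mcl{I}}=\mcl{I}/(\mcl{I}\mcl{M}_1\cdots\mcl{M}_t)$; by the Chinese Remainder Theorem this decomposes as $\overline{\mcl{I}}\cong\bigoplus_{i=1}^t\mcl{I}/\mcl{I}\mcl{M}_i$, because the ideals $\mcl{I}\mcl{M}_i$ are pairwise comaximal. Since each $\mcl{I}\neq\mcl{I}\mcl{M}_i$, each summand is non-zero, so I can choose an element $\bar f\in\overline{\mcl{I}}$ whose image in every summand $\mcl{I}/\mcl{I}\mcl{M}_i$ is non-zero; lifting to $f\in\mcl{I}$ gives $f\in\mcl{I}\setminus\bigcup_{i=1}^t\mcl{I}\mcl{M}_i$, which is exactly the first assertion. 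For the "in particular" clause, $f\in\mcl{I}\subseteq\mcl{M}_i^{m_i}$ gives one containment for free, and $f\notin\mcl{I}\mcl{M}_i$ together with $\mcl{I}\mcl{M}_i\supseteq\mcl{M}_i^{m_i+1}\cap\mcl{I}$ — no wait, the correct statement is $\mcl{I}\mcl{M}_i\subseteq\mcl{M}_i^{m_i+1}$ combined with $f\in\mcl{I}$ — gives $f\notin\mcl{M}_i^{m_i+1}$; hence $f\in(\mcl{M}_i^{m_i}\setminus\mcl{M}_i^{m_i+1})\cap\mcl{I}$ for all $i$.

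The main obstacle I anticipate is justifying the CRT decomposition $\mcl{I}/(\mcl{I}\mcl{M}_1\cdots\mcl{M}_t)\cong\bigoplus_i\mcl{I}/\mcl{I}\mcl{M}_i$: one needs that the submodules $\mcl{I}\mcl{M}_i$ of the module $\mcl{I}$ are pairwise comaximal in the sense $\mcl{I}\mcl{M}_i+\mcl{I}\mcl{M}_j=\mcl{I}$ for $i\neq j$, and that their intersection equals their product $\mcl{I}\mcl{M}_1\cdots\mcl{M}_t$. The first holds because $\mcl{M}_i+\mcl{M}_j=\R$ implies $\mcl{I}=\mcl{I}(\mcl{M}_i+\mcl{M}_j)=\mcl{I}\mcl{M}_i+\mcl{I}\mcl{M}_j$; the second then follows by the usual inductive argument for modules, or one can simply bypass it and argue directly that the natural map $\mcl{I}\to\bigoplus_i\mcl{I}/\mcl{I}\mcl{M}_i$ is surjective (again from pairwise comaximality), which is all that is needed to choose $\bar f$ with prescribed non-zero images. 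If one prefers to avoid module-theoretic CRT entirely, the alternative is the classical prime-avoidance induction: build $f$ as a sum $\sum_i c_i g_i$ with $g_i\in\mcl{I}\setminus\mcl{I}\mcl{M}_i$ and $c_i$ chosen so that $c_ig_i\notin\mcl{I}\mcl{M}_i$ but $c_jg_i\in\mcl{I}\mcl{M}_i$ for $j\neq i$, using that $\mcl{M}_i\not\subseteq\mcl{M}_j$ to find the multipliers — I would present whichever the author's subsequent text seems to favour, but the CRT phrasing is the shortest.
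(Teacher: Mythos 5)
Your CRT argument does establish the first assertion cleanly: the submodules $\mcl{I}\mcl{M}_i$ of $\mcl{I}$ are pairwise comaximal (from $\mcl{M}_i+\mcl{M}_j=\R$), each quotient $\mcl{I}/\mcl{I}\mcl{M}_i$ is nonzero (since $\mcl{I}\sbq\mcl{I}\mcl{M}_i$ would force $\mcl{I}\sbq\mcl{M}_i^{m_i+1}$), and surjectivity of $\mcl{I}\to\bigoplus_i\mcl{I}/\mcl{I}\mcl{M}_i$ gives $f\in\mcl{I}\setminus\bigcup_i\mcl{I}\mcl{M}_i$. (The remark that $\mcl{I}$ is finitely generated is neither true in general nor needed.) This half is, if anything, tidier than the paper's construction. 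The gap is in your deduction of the ``in particular'' clause, and it is a genuine one, not a matter of phrasing. You argue: $f\notin\mcl{I}\mcl{M}_i$, $\mcl{I}\mcl{M}_i\sbq\mcl{M}_i^{m_i+1}$, hence $f\notin\mcl{M}_i^{m_i+1}$. That implication runs the wrong way: from $f\notin A$ and $A\sbq B$ you cannot conclude $f\notin B$. What you actually need is the reverse containment you first wrote down and then abandoned, namely $\mcl{I}\cap\mcl{M}_i^{m_i+1}\sbq\mcl{I}\mcl{M}_i$, and this fails in general commutative rings. For instance take $\R=k[x,y]$, $\mcl{M}=(x,y)$, $\mcl{I}=(x,y^2)$, so $m=1$; then $y^2\in\mcl{I}\cap\mcl{M}^2$ but $y^2\notin\mcl{I}\mcl{M}=(x^2,xy,y^3)$. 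So an arbitrary lift of nonzero residues, as in your construction, can land in $\mcl{M}_i^{m_i+1}$ and violate the order condition.

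The repair inside your own framework is short: instead of lifting an arbitrary nonzero class in each summand, pick for each $i$ a specific $f_i\in\mcl{I}\setminus\mcl{M}_i^{m_i+1}$ (which exists by the hypothesis $\mcl{M}_i^{m_i+1}\nsupseteq\mcl{I}$, and is automatically nonzero in $\mcl{I}/\mcl{I}\mcl{M}_i$ since $\mcl{I}\mcl{M}_i\sbq\mcl{M}_i^{m_i+1}$), and lift the tuple $(\bar f_1,\ldots,\bar f_t)$. Then $f-f_i\in\mcl{I}\mcl{M}_i\sbq\mcl{M}_i^{m_i+1}$ forces $f\notin\mcl{M}_i^{m_i+1}$, while $f\in\mcl{I}\sbq\mcl{M}_i^{m_i}$ gives the other containment; both conclusions of the lemma follow. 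This is essentially what the paper does, but by hand rather than by CRT: it builds $f$ as a sum $g+\sum_i f_i\prod_{j\neq i}f_{ij}^{m_j+1}$ in which each correction term has the prescribed order at $\mcl{M}_i$ and lies deep inside every other $\mcl{M}_j^{m_j+1}$, so the exact-order condition $f\in\mcl{M}_i^{m_i}\setminus\mcl{M}_i^{m_i+1}$ is achieved directly and the statement $f\notin\mcl{I}\mcl{M}_i$ is then the (correctly oriented) consequence.
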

\begin{proof}
Let $\mcl{M}_1,\mcl{M}_2,\ldots,\mcl{M}_r$ be the finite set of
maximal ideals for which $m_i=0$ and let
$\mcl{M}_{r+1},\mcl{M}_{r+2},\ldots,\mcl{M}_t$ be the remaining
ideals for which $m_i>0$. So first we observe that for $1 \leq j
\leq r,\mcl{M}_j$ does not contain $\mcl{I}\bigg(\us{i=1,i\neq
j}{\os{t}{\prod}} \mcl{M}_i\bigg)$. So there exists $g_j\in
\mcl{I}\bigg(\us{i=1,i\neq j}{\os{t}{\prod}} \mcl{M}_i\bigg)$ with
$g_j \nin \mcl{M}_j$. Then $g=\us{i=1}{\os{r}{\sum}}g_i \in
\mcl{I},g \nin \mcl{M}_j$ for $j=1,2,\ldots,r$. Let $f_i \in
\mcl{I}\bs \mcl{M}_i^{m_i+1}$ for $i \geq (r+1)$. Let $f_{ij} \in
\mcl{M}_j\bs \mcl{M}_i$. Then we observe that \equ{f=g+\us{i>r,g \in
\mcl{M}_i^{m_i+1}}{\sum}\bigg(f_i\us{j\neq i}{\prod}f_{ij}^{m_j+1}\bigg)\in
\bigg(\mcl{I}\us{i=1}{\os{t}{\bigcap}}(\mcl{M}_i^{m_i}\bs\mcl{M}_i^{m_i+1})\bigg)\bs
\bigg(\us{i=1}{\os{t}{\bigcup}}
\mcl{I}\mcl{M}_i\bigg)} Taking this $f$, 
Lemma~\ref{lemma:OrderPrescriptionLemma} follows.
\end{proof}
\begin{prop}[Ideal avoidance]
\label{prop:IdealAvoidance}
~\\
Let $\R$ be a commutative ring with unity. Suppose for every maximal
ideal $\mcl{M}$, $\us{i=1}{\os{\infty}{\bigcap}} \mcl{M}^i=(0)$. Let
$\mcl{I}\subs \R$ be an ideal. For $r\in \mbb{N}$, let
$\mcl{J}_1,\mcl{J}_2,\ldots,\mcl{J}_r \subs \R$ be $r$ proper ideals such that
\equ{\mcl{I}=\us{i=1}{\os{r}{\bigcup}} \mcl{I}\mcl{J}_i.} Then
$\mcl{I}=(0)$.
\end{prop}
\begin{proof}
Replace the set of ideals $\{\mcl{J}_i:1 \leq i \leq r\}$ by a
finite set of  maximal ideals $\{\mcl{M}_i:1 \leq i \leq s\}$ such
that each maximal ideal $\mcl{M}_i$ contains some ideal $\mcl{J}_j$
for some $j$ and for any ideal $\mcl{J}_i$ there exists a maximal
ideal $\mcl{M}_j$ such that $\mcl{M}_j \sups \mcl{J}_i$. Then we
have \equ{\mcl{I}=\us{i=1}{\os{s}{\bigcup}} \mcl{I}\mcl{M}_i.} Before
applying order prescription Lemma~\ref{lemma:OrderPrescriptionLemma}
for the ideal $\mcl{I}$, if it is non-zero, we observe that a
suitable choice of $m_i$ for $\mcl{M}_i$ exists because of the
hypothesis about intersection property. So $\mcl{I}=(0)$. This
proves Proposition~\ref{prop:IdealAvoidance}.
\end{proof}
\begin{remark}
\label{remark:functionorderofvanishing}
Lemma~\ref{lemma:OrderPrescriptionLemma} gives the existence of functions in an ideal
of functions with a certain order of vanishing at a finite set of closed points
in the context of schemes.   
\end{remark}


\subsection{\bf{The unital lemma}}
\label{sec:UnitalLemma}
In this section we prove unital Lemma~\ref{lemma:Unital} which is useful to obtain a
unit in a $k\operatorname{-}$row unital vector via an $SL_k(\R)\operatorname{-}$elementary
transformation.
\begin{prop}
\label{prop:Unital} Let $\R$ be a commutative ring with unity. Let
$k \geq 2$ be a positive integer. Let $\{a_1,a_2,\ldots,a_k\} \subs
\R$ be a unital set i.e. $\us{i=1}{\os{k}{\sum}}(a_i)=\R$. Let
$\mcl{J}\sbnq \R$ be an ideal contained in only finitely many maximal
ideals. Then there exists $a \in (a_2,\ldots,a_k)$ such that
$a_1+a$ is a unit mod $\mcl{J}$.
\end{prop}
\begin{proof}
Let $\{\mcl{M}_i:1\leq i\leq t\}$ be the finite set of maximal
ideals containing $\mcl{J}$. For example $\mcl{J}$ could be a
product of maximal ideals. Since the set $\{a_1,a_2,\ldots,a_k\}$ is
unital there exists $d \in (a_2,a_3,\ldots,a_k)$ such that
$(a_1)+(d)=(1)$. Now we apply Proposition~\ref{prop:FundLemmaSchemes}, where
$X=Y=Spec(R)$, $E=\{\mcl{M}_i:1\leq i\leq t\}$ to conclude that
there exists $n_0 \in \R$ such that $a=n_0d$ and $a_1+a=a_1+n_0d \nin
\mcl{M}_i$ for $1 \leq i\leq t$. This proves 
Proposition~\ref{prop:Unital}.
\end{proof}

\begin{lemma}
\label{lemma:Unital} Let $\R$ be a commutative ring with unity. Let
$k \geq 2$ be a positive integer. Let $\{a_1,a_2,\ldots,a_k\} \subs
\R$ be a unital set i.e. $\us{i=1}{\os{k}{\sum}}(a_i)=\R$. Let
$E$ be a finite set of maximal ideals in $\R$. Then there exists
$a \in (a_2,\ldots,a_k)$ such that $a_1+a \nin
\mcl{M}$ for all $\mcl{M} \in E$.
\end{lemma}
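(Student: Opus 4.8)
The plan is to reduce this directly to the fundamental lemma on arithmetic progressions for schemes, Theorem~\ref{theorem:FundLemmaSchemes}, following verbatim the argument used for Theorem~\ref{theorem:Unital}; the only change is that the given finite set $E$ of maximal ideals now plays the role of the set of closed points, rather than the finite set of maximal ideals containing some ideal $\mcl{J}$.

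First I would use unitality of $\{a_1,\ldots,a_k\}$ to produce a single element $d$ of the sub-ideal $(a_2,\ldots,a_k)$ that is comaximal with $a_1$. Indeed, since $\sum_{i=1}^{k}(a_i)=\R$ there are $r_1,\ldots,r_k\in\R$ with $r_1a_1+r_2a_2+\cdots+r_ka_k=1$; setting $d=r_2a_2+\cdots+r_ka_k$ gives $d\in(a_2,\ldots,a_k)$ and $r_1a_1+d=1$, so $(a_1)+(d)=(1)$.

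Next I would apply Theorem~\ref{theorem:FundLemmaSchemes} with $X=Y=\mathrm{Spec}(\R)$, regular functions $f=a_1$ and $g=d$ (the comaximality just established guarantees that the unit function lies in $(f,g)$), and with the finite set of closed points taken to be $E$ itself. The theorem then yields an element $a_0\in\R$ such that $a_1+a_0d$ is non-zero in the residue field at every $\mcl{M}\in E$, i.e.\ $a_1+a_0d\nin\mcl{M}$ for all $\mcl{M}\in E$. Taking $a=a_0d\in(a_2,\ldots,a_k)$ finishes the proof. (Alternatively, one could bypass the scheme language and run the underlying chinese remainder theorem argument directly on the finitely many maximal ideals in $E$, discarding those that contain $d$ and imposing a shifted congruence at each of the remaining ones.)

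I do not expect a genuine obstacle here, since the content coincides with that of Theorem~\ref{theorem:Unital} and no new hypothesis on $\R$ is needed. The only point worth stating carefully is the first step — that unitality of the full set $\{a_1,\ldots,a_k\}$ really does yield an element of $(a_2,\ldots,a_k)$ comaximal with $a_1$ — and this is immediate from the partial sum of the B\'ezout-type identity above.
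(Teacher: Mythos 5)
Your proposal is correct and coincides with the paper's argument: the paper proves Lemma~\ref{lemma:Unital} by noting it is "essentially similar" to Theorem~\ref{theorem:Unital}, whose proof likewise extracts $d\in(a_2,\ldots,a_k)$ with $(a_1)+(d)=(1)$ and then applies Theorem~\ref{theorem:FundLemmaSchemes} with $X=Y=\mathrm{Spec}(\R)$ and the finite set of closed points taken to be $E$. Your write-up just makes explicit the details the paper leaves implicit.
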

\begin{proof}
The proof is essentially similar to Proposition~\ref{prop:Unital} even though we need not have to construct an ideal
$\mcl{J}$ which is contained in exactly the maximal ideals in the set $E$.
\end{proof}


\subsection{\bf{Unique factorization maximal ideal monoid of the ring}}
\label{sec:UFMaximalMonoid}
In this section we define in Definition~\ref{defn:Monoid}, the unique factorization monoid of maximal
ideals of the ring. We start by proving below a proposition for a commutative ring
$\R$ which is not a field i.e. ideal $(0)$ is not maximal.
\begin{prop}[Unique factorization]
\label{prop:UniqueFactorization} Let $\R$ be a commutative
ring with unity. Suppose for all maximal ideals $\mcl{M}\subs \R, \mcl{M}^i \neq
\mcl{M}^{i+1}$ for all $i \geq 0$. Let
$I=\mcl{M}_1^{t_1}\mcl{M}_2^{t_2}\ldots\mcl{M}_k^{t_k}=\mcl{N}_1^{s_1}\mcl{N}_2^{s_2}
\ldots\mcl{N}_r^{s_r}$ be two factorizations as a product of powers of distinct
maximal ideals. Then $r=k,\mcl{M}_i=\mcl{N}_i$ with a rearrangement if needed and $t_i=s_i$ for all
$1 \leq i \leq k$.
\end{prop}
\begin{proof}
If $\mcl{M}\sups \mcl{I}$ then $\mcl{M}=\mcl{M}_j$ for some $1 \leq
j \leq k$. So
\equ{\{\mcl{M}_1,\mcl{M}_2,\ldots,\mcl{M}_k\}=\{\mcl{N}_1,\mcl{N}_2,\ldots,\mcl{N}_r\},k=r.}
Now if the ideal $\mcl{I}$ is a power of a maximal ideal then the
power is uniquely determined because $\mcl{M}^i \neq \mcl{M}^{i+1}$
for all $i \geq 0$ and all maximal ideals $\mcl{M} \subs \R$. 
We prove the following respective powers are equal in the following two claims.
\begin{claim}
If $\mcl{M}$ is a maximal ideal and $S=\R\bs \mcl{M}$. Then we have
\equ{S^{-1}\mcl{M}^i=(S^{-1}\mcl{M})^i=\{\frac as\mid a \in
\mcl{M}^i,s \nin \mcl{M}\}} Conversely if $\frac bt \in
S^{-1}\mcl{M}^i$ then $b \in \mcl{M}^i$. Also \equ{S^{-1}\mcl{M}^i
\neq S^{-1}\mcl{M}^{i+1} \text{ for all }i\geq 0.}
\end{claim}
\begin{proof}[Proof of Claim]
Suppose $\frac bt \in S^{-1}\mcl{M}^i$ then there exists $a \in
\mcl{M}^i,s,u \in S$ such that $atu=bsu$. So $b \in \mcl{M}^i$ as
$su \nin \mcl{M}$. Also we have $S^{-1}\mcl{M}^i=(S^{-1}\mcl{M})^i$.
Since $\mcl{M}^i \neq \mcl{M}^{i+1}$ the other inequality of sets in
the claim follows.
\end{proof}
\begin{claim}
If $\mcl{I}=\mcl{M}_1^{t_1}\mcl{M}_2^{t_2}\ldots\mcl{M}_k^{t_k}$ and
$S=\mcl{R}\bs \mcl{M}_1$ then $S^{-1}\mcl{I}=S^{-1}\mcl{M}_1^{t_1}$.
\end{claim}
\begin{proof}[Proof of Claim]
Let $\frac bt \in S^{-1}\mcl{I}$ with $b \in \mcl{I},s \in S$. Then
$b=\us{j=1}{\os{k}{\sum}} b_jc_j$ with $b_j \in \mcl{M}_1^{t_1},c_j
\in \mcl{M}_2^{t_2}\ldots\mcl{M}_k^{t_k}$. So $\frac bt \in
S^{-1}\mcl{M}_1^{t_1}$. Conversely if $b \in \mcl{M}_1^{t_1}$ then
pick $s_i\in \mcl{M}_i\bs \mcl{M}_1,2 \leq i \leq k$ then for any
$\frac bs \in S^{-1}\mcl{M}_1^{t_1},\frac bs =\frac
{bs_2^{t_2}s_3^{t_3}\ldots s_k^{t_k}}{ss_2^{t_2}s_3^{t_3}\ldots
s_k^{t_k}} \in S^{-1}\mcl{I}$. So
$S^{-1}\mcl{M}^{t_1}=S^{-1}\mcl{I}$. This proves the claim.
\end{proof}
Upon localization at each $\mcl{M}_i$ in the factorization of $\mcl{I}$, we observe that the
powers are also uniquely determined because of these the two claims. Hence this
Proposition~\ref{prop:UniqueFactorization} follows.
\end{proof}
Now we define the valuation of an ideal in the multiplicative monoid of maximal ideals 
with respect to a maximal ideal.
\begin{defn}[A Total Valuation Map V, Valuation $V_{\mcl{M}}$ at $\mcl{M}$ on Monoid 
$\mathit{M}(\R)$]
\label{defn:Monoid}
Let $\R$ be a commutative ring with unity. Suppose for any maximal 
ideal $\mcl{M}\subs \R, \mcl{M}^i \neq \mcl{M}^{i+1}$ for all $i \geq 0$. Define $\mathit{M}(\R)$ to be the
multiplicative monoid of generated by all maximal ideals in $\R$.

Define two maps \equ{V,V_{\mcl{M}}:\mathit{M}(\R) \lra \mbb{N}\cup
\{0\}} as \equa{V(\mcl{J}&=\us{i=1}{\os{t}{\prod}}\mcl{N}_i^{s_i}
\in \mathit{M}(\R))=\us{i=1}{\os{t}{\sum}} s_i\\
V_{\mcl{M}}(\mcl{J}&=\us{i=1}{\os{t}{\prod}}\mcl{N}_i^{s_i} \in
\mathit{M}(\R))=s_i \text{ if } \mcl{M}=\mcl{N}_i \text{ otherwise }0.}
The above definitions are well defined due to Proposition~\ref{prop:UniqueFactorization}. 
\end{defn}
Here we introduce a definition of factorization associated to an element as a product of maximal ideals 
with respect to a finitely generated monoid. For a fixed element and a monoid this factorization is unique. 
\begin{defn}
\label{defn:Uniquefact}
Let $\R$ be a commutative ring with unity. The ring $\R$ satisfies the
following properties.
\begin{enumerate}
\item For each maximal ideal $\mcl{M}$ we have $\mcl{M}^i \neq
\mcl{M}^{i+1}$ for all $i \geq 0$.
\item $\us{n \geq 0}{\bigcap}\mcl{M}^i=(0)$.
\end{enumerate}
Let $\mathit{M}(\R)$ be the monoid generated by maximal ideals in
$\R$. Let $\mcl{F}$ be a finite set of maximal ideals. Then for any
$0 \neq x \in \R$ we can define map $V_{\mcl{F}}$ with
respect to the monoid $\mcl{F}$. Since $x \neq 0$ for each maximal
ideal $\mcl{M}$ there exists a largest integer $i=i_{\mcl{M}}\geq 0$
such that $x \in \mcl{M}^{i}\bs \mcl{M}^{i+1}$. We define the two maps
\equ{V_{\mcl{F}}:\R^{*} \lra \mathit{M}(\mcl{F}),V_{\mcl{M}}:\R^{*}
\lra \mbb{N}} as $V_{\mcl{F}}(x)=\us{\mcl{M}\in
\mcl{F}}{\prod}\mcl{M}^{i_{\mcl{M}}}$ and
$V_{\mcl{M}}(x)=i_{\mcl{M}}$. Clearly $x \in V_{\mcl{F}}(x)$ and
$V_{\mcl{F}}(x)$ is the unique factorization associated to the element $x$ with
respect to the monoid $\mathit{M}(\mcl{F})$.
\end{defn}

Now we prove the following proposition about non-emptiness of certain sets.
\begin{prop}[Non-emptiness]
\label{prop:NonEmptiness} Let $\R$ be a commutative ring
with identity. For each maximal ideal $\mcl{M}\subs \R$ suppose $\mcl{M}^i \neq \mcl{M}^{i+1}$ for all $i\geq 0$
and $\us{i\geq 0}{\bigcap}\mcl{M}^i=(0)$. Let
$\mcl{F}$ be a finite set of maximal ideals in $\R$. Let
$\mathit{M}(\mcl{F})$ be the finitely generated monoid by $\mcl{F}$. 
Let $\mcl{I}=\mcl{M}_1^{t_1}\mcl{M}_2^{t_2}\ldots\mcl{M}_k^{t_k} \in
\mathit{M}(\mcl{F})$ be a product of maximal ideals. Then the set
\equ{\mcl{I}\bs \bigg(\us{\mcl{J}\in
\mathit{M}(\mcl{F})^{*}}{\bigcup}\mcl{I}\mcl{J}\bigg) \neq \es.}
\end{prop}
\begin{proof}
Using unique factorization of ideals in the monoid $\mathit{M}(\mcl{F})$
as a product of maximal ideals in $\mcl{F}$ we conclude that $\mcl{I}\neq 0$.
Now we can use Proposition~\ref{prop:IdealAvoidance} on ideal
avoidance for the ring $\R$. Since the monoid is finitely generated
by finitely many maximal ideals in $\mcl{F}$, we have
\equ{\mcl{I}\bs \bigg(\us{\mcl{M}\in
\mcl{F}}{\bigcup}\mcl{I}\mcl{M}\bigg)= \mcl{I}\bs
\bigg(\us{\mcl{J}\in
\mathit{M}(\mcl{F})^{*}}{\bigcup}\mcl{I}\mcl{J}\bigg) \neq \es} where
$\mathit{M}(\mcl{F})^{*}$ denote the set $\mathit{M}(\mcl{F})\bs \{\R\}$.
\end{proof}
The following proposition is also similar to the previous proposition and it gives rise to 
multiplicative properties.
\begin{prop}
\label{prop:DetvalElement} Let the notation be as in Proposition~\ref{prop:NonEmptiness}. 
For every ideal $\mcl{I}
\in \mathit{M}(\mcl{F})$, let $a_\mcl{I} \in \mcl{I}\bs
\bigg(\us{\mcl{J}\in
\mathit{M}(\mcl{F})^{*}}{\bigcup}\mcl{I}\mcl{J}\bigg)$. Let
$\mcl{I}_j\in \mathit{M}(\mcl{F}): 1\leq j \leq r$ are pairwise
co-maximal. Then \equ{\us{i=1}{\os{r}{\prod}}a_{\mcl{I}_i} \in
\us{i=1}{\os{r}{\prod}} \mcl{I}_i\bs \bigg(\us{\mcl{J}\in
\mathit{M}(\mcl{F})^{*}}{\bigcup}\mcl{J}\us{i=1}{\os{r}{\prod}}
\mcl{I}_i\bigg)}
\end{prop}
\begin{proof}
First we prove the claim below.
\begin{claim}
\label{claim:unitmodulo}
If $a \in \R$ and $s \nin \mcl{M}$ then \equ{a \in \mcl{M}^i\bs
\mcl{M}^{i+1} \Lra as \in \mcl{M}^i\bs \mcl{M}^{i+1}.}
\end{claim}
\begin{proof}[Proof of Claim]
First we observe that if $s\nin \mcl{M}$ then $s$ is a unit modulo $\mcl{M}^k$ for all $k\geq 0$.
If $a \in \mcl{M}^i$ then $as \in \mcl{M}^i$. If $as \in
\mcl{M}^{i+1}$ then since $s \nin \mcl{M},a \in \mcl{M}^{i+1}$. So
one way implication follows. Now the other way implication also
follows similarly. This proves the claim.
\end{proof}
Since the ideals $\mcl{I}_i: 1\leq i \leq r$ are co-maximal,
for any $1\leq i\neq j \leq r$ we have that $a_{\mcl{I}_i}$ is a unit modulo every power of any maximal ideal containing $\mcl{I}_j$. 
Now the proposition follows because of Claim~\ref{claim:unitmodulo}.
\end{proof}
Proposition~\ref{prop:NonEmptiness} gives rise to the following
definition.
\begin{defn}
Let $\R$ be a commutative ring with unity. Suppose for each maximal
ideal $\mcl{M}$ we have $\mcl{M}^i \neq \mcl{M}^{i+1}$ and $\bigcap
\mcl{M}^i=(0)$. Let $\mcl{F}$ be a finite set of maximal ideals in $\R$.
Let $\mathit{M}(\mcl{F})$ be the finitely generated monoid by the
finite set $\mcl{F}$. Let $\mathit{M}(\mcl{F})^{*}=\mathit{M}(\mcl{F})\bs\{\R\}$. Let $\mcl{I}\in \mathit{M}(\mcl{F})$. Define
the set \equ{\mcl{S}_{\mcl{I}}\eqdef\mcl{I}\bs \bigg(\us{\mcl{J}\in
\mathit{M}(\mcl{F})^{*}}{\bigcup}\mcl{I}\mcl{J}\bigg).} By Proposition~\ref{prop:UniqueFactorization} the ideal $(0) \nin \mathit{M}(\mcl{F})$. By
Proposition~\ref{prop:NonEmptiness} this set
$\mcl{S}_{\mcl{I}}$ is non-empty.
\end{defn}
Now we prove a useful theorem below which produces elements in $S_{\mcl{I}}$
for ideals $\mcl{I}$ in a finitely generated multiplicative monoid which satisfy 
multiplicative properties and co-maximality conditions. The theorem is as stated below.
\begin{theorem}
\label{theorem:Comaximality} Let $\R$ be a commutative ring with
unity. Suppose for each maximal ideal $\mcl{M} \subs \R$ we have
\begin{enumerate}[label=(\alph*)]
\item $\mcl{M}^i \neq \mcl{M}^{i+1}$ for all $i\geq 0$ and
\item $\us{i\geq 0}{\bigcap} \mcl{M}^i=(0)$.
\end{enumerate}
Let $\mcl{F}$ be a finite set of maximal ideals in $\R$.  Let $\mathit{M}(\mcl{F})$ denote the
corresponding finitely generated monoid. Suppose every non-zero element $r \in \R$ is
contained in finitely many maximal ideals. Then there exists a nowhere zero choice
multiplicative monoid map 
$\Gs=\Gs_{\mcl{F}}:\mathit{M}(\mcl{F}) \lra \R$ such that
\begin{enumerate}
\item (Unit Condition): $\Gs(\R)=1$.
\item (Choice Set Condition): $\Gs(\mcl{I}) \in \mcl{S}_{\mcl{I}}$ for all $\mcl{I} \in 
\mathit{M}(\mcl{F})$.
\item (Multiplicativity Condition): If $\mcl{I},\mcl{J} \in \mathit{M}(\mcl{F})$ are 
co-maximal then
$\Gs(\mcl{I}\mcl{J})=\Gs(\mcl{I})\Gs(\mcl{J})$.
\item (Comaximality Condition): For ideals $\mcl{I}_1,\mcl{I}_2,\ldots,\mcl{I}_r \in 
\mathit{M}(\mcl{F})$
\equ{\text{If }\mcl{I}_1+\mcl{I}_2+\ldots+\mcl{I}_r=1\text{ then
}(\Gs(\mcl{I}_1))+(\Gs(\mcl{I}_2))+\ldots+(\Gs(\mcl{I}_r))=1.}
\end{enumerate}
\end{theorem}
\begin{proof}
We prove this theorem as follows.
\begin{claim}
If $\mcl{I},\mcl{J}\in \mathit{M}(\mcl{F})$ are co-maximal then we
have $(\mcl{S}_{\mcl{I}})+(\mcl{S}_{\mcl{J}})=1$ i.e. the ideals of
the sets are co-maximal and may not be the sets themselves.
\end{claim}

\begin{proof}[Proof of Claim]
Let $\mcl{M}$ be a maximal ideal containing the set
$\mcl{S}_{\mcl{I}}$ then $\mcl{M}$ occurs in the unique
factorization of $\mcl{I} \in \mathit{M}(\mcl{F})$. Suppose not, then 
\equ{\mcl{I}=\big(\mcl{I}\cap \mcl{M}\big)\us{\mcl{N} \in
\mcl{F}}{\bigcup}\mcl{I}\mcl{N}=\mcl{I}\mcl{M}\us{\mcl{N} \in
\mcl{F}}{\bigcup}\mcl{I}\mcl{N}}
contradicting Proposition~\ref{prop:IdealAvoidance} for the non-zero ideal $\mcl{I}$.
Since there are no common maximal
ideals occurring in the unique factorization of $\mcl{I},\mcl{J}$ the
claim follows.
\end{proof}
Now we prove on more claim.
\begin{claim}
Let $j\in \mbb{N}, \mcl{M}\in \mcl{F}$. Let $a\in \mcl{S}_{\mcl{M}^j}$. Then the maximal ideals which contain $a$ are $\mcl{M}$ and some finitely maximal ideals $\mcl{N}\subs \mcl{R}$ such that 
$\mcl{N} \nin \mcl{F}\bs\{\mcl{M}\}$.
\end{claim}
\begin{proof}[Proof of Claim]
Let $\mcl{M}\in \mcl{F}$ be any maximal ideal. Fix a $j\in \mbb{N}$. 
Let $a\in \mcl{S}_{\mcl{M}^j}$. Then clearly we have $a\in \big(\mcl{M}^j\bs \mcl{M}^{j+1}\big)\subs \mcl{M}$.  Also the element $a$ naturally avoids any maximal ideal $\mcl{N}\in \mcl{F}\bs\{\mcl{M}\}$,
that is, for any maximal ideal $\mcl{N}\in \mcl{F},\mcl{N}\neq \mcl{M}$ we have $a\nin \mcl{N}$.
This is because \equ{\mcl{S}_{\mcl{M}^j} \subs \mcl{M}^j \bs \mcl{M}^j\mcl{N}} using the definition of $\mcl{S}_{\mcl{M}^j}$. If $a\in \mcl{N}$ then $a\in \mcl{M}^j\cap \mcl{N}=\mcl{M}^j\mcl{N}$
which is a contradiction.  So the maximal ideals which contain $a$ are $\mcl{M}$ and possibly some other maximal ideals $\ti{\mcl{M}}$ in $\mcl{R}$ such that $\ti{\mcl{M}}\nin \mcl{F}\bs\{\mcl{M}\}$. 
Hence the claim follows. 
\end{proof}

Continuing with the proof, we define $\Gs(\R)=1$. Let
$\mcl{F}=\{\mcl{M}_1,\mcl{M}_2,\ldots,\mcl{M}_k\}$. Since every
non-zero element is contained in finitely many maximal ideals we
find the elements $\Gs(\mcl{M}_i^{t_i})\in S_{\mcl{M}_i^{t_i}}, t_i\in \mbb{N}, 1\leq i\leq k$
inductively as follows.

First we choose any $\Gs(\mcl{M}_1) \in S_{\mcl{M}_1}$. Now this
element is contained in finitely many maximal ideals. Choose
$\Gs(\mcl{M}_2) \in S_{\mcl{M}_2}$ avoiding these finitely many
maximal ideals. Continue this process till we find 
$\#(\mcl{F})=k\operatorname{-}$elements $\Gs(\mcl{M}_i) \in \mcl{S}_{\mcl{M}_i}$ inductively for $1 \leq i \leq k$. 
At each $i^{th}$ step the element $\Gs(\mcl{M}_i)$ also avoids every maximal ideal containing any of the previous 
$i-1$ elements $\Gs(\mcl{M}_j),1\leq j\leq i-1$. These $k\operatorname{-}$elements $\Gs(\mcl{M}_i),1\leq i\leq k$ will be pairwise co-maximal. 
Here we do not want to just extend multiplicatively by raising powers in general. 

Now we find $\Gs(\mcl{M}_1^2) \in S_{\mcl{M}_1^2}$ which is co-maximal to all the previously found
elements $\Gs(\mcl{M}_2),\ldots,\Gs(\mcl{M}_k)$ corresponding to other maximal ideals $\mcl{M}_2,\ldots,\mcl{M}_k$ using Proposition~\ref{prop:IdealAvoidance}
on ideal avoidance. Also $\Gs(\mcl{M}_1^2)$ must avoid every maximal ideal other than $\mcl{M}_1$ containing
$\Gs(\mcl{M}_1)$. So continuing this way we have defined $\Gs$ for
all powers of maximal ideals in $\mcl{F}$. Now extend $\Gs$
multiplicatively to the entire monoid for co-maximal ideals. We use
Proposition~\ref{prop:DetvalElement} to conclude $\Gs(\mcl{I}) \in
\mcl{S}_{\mcl{I}}$.

Now if $\mcl{I}_1+\mcl{I}_2+\ldots+\mcl{I}_r=1$. Let $\mcl{M}$ be
any maximal ideal. If $\mcl{M}$ contains all the elements
$\Gs(\mcl{I}_1),\Gs(\mcl{I}_2),\ldots,\Gs(\mcl{I}_r)$ then $\mcl{M}$
contains $\Gs(\mcl{M}_i^{l_i})$ and $\Gs(\mcl{M}_j^{l_j})$ for two
distinct maximal ideals $\mcl{M}_i\neq \mcl{M}_j$ in $\mcl{F}$ which is a contradiction. So
co-maximality condition follows.

Now the fact that $\Gs(\mcl{I}) \in S_{\mcl{I}}$ implies that $\Gs$
is nowhere zero. Now Theorem~\ref{theorem:Comaximality} follows.
\end{proof}
\begin{obs}
\label{obs:DistinctMax} In Theorem~\ref{theorem:Comaximality} while
defining the map $\Gs_{\mcl{F}}$ it satisfies the following properties
automatically. Let $\{\mcl{M}_i, 1\leq i\leq l\}\subs \mcl{F}$.
\begin{enumerate}[label=(\alph*)]
\item If $\mcl{A}=\mcl{M}_1^{t_1}\mcl{M}_2^{t_2}\ldots\mcl{M}_l^{t_l},\mcl{B}=\mcl{M}_1^{s_1}\mcl{M}_2^{s_2}\ldots\mcl{M}_l^{s_l}$
with $\mcl{M}_1,\ldots,\mcl{M}_l\in \mcl{F}$ then 
\equ{\Gs_{\mcl{F}}(\mcl{A})=\Gs_{\mcl{F}}(\mcl{M}_1^{t_1})\ldots \Gs_{\mcl{F}}(\mcl{M}_l^{t_l}),\Gs_{\mcl{F}}(\mcl{B})=\Gs_{\mcl{F}}(\mcl{M}_1^{s_1})\ldots
\Gs_{\mcl{F}}(\mcl{M}_l^{s_l}).} 
\item We have for each $1 \leq i \neq j \leq l$, the set of maximal ideals containing $\Gs_{\mcl{F}}(\mcl{M}_i^{t})$ is disjoint from the set of maximal ideals containing
$\Gs_{\mcl{F}}(\mcl{M}_j^{s})$ for any $t,s\geq 0$. 
\item If $t \neq s$ for any $1 \leq i \leq l$, then the set of maximal ideals containing $\Gs_{\mcl{F}}(\mcl{M}_i^{t})$ is disjoint from the set of maximal ideals containing
$\Gs_{\mcl{F}}(\mcl{M}_i^{s})$ other than $\mcl{M}_i$.
\end{enumerate} 
\end{obs}
\begin{example}
\begin{itemize}
\item Let $\R=\Z$. Here $\Gs$ can be defined for the entire monoid
$\mathit{M}(\R)$. The map $\Gs:\mathit{M}(\R) \lra \R$ given by
$\Gs((p_1^{t_1}p_2^{t_2}\ldots p_k^{t_k}))=p_1^{t_1}p_2^{t_2}\ldots
p_k^{t_k},$ where $p_i:1 \leq i \leq k$ are $k\operatorname{-}$distinct primes.
\item Let $\R$ be a Dedekind domain with finitely many maximal ideals which is not
a field. It is a principal ideal domain. Any element in $\gp_i \in \mcl{P}_i \bs
\bigg(\us{j\neq i}{\bigcup}\mcl{P}_j\cup \mcl{P}_i^2\bigg)$ is a
generator as its ideal factorization in $\R$ is given by
$(\gp_i)=\mcl{P}_i$. Here the monoid $\mathit{M}(\R)$ is finitely
generated. Then define
$\Gs(\us{i=1}{\os{k}{\prod}}\mcl{P}_i^{t_i})=\us{i=1}{\os{k}{\prod}}\gp_i^{t_i}$.
\item A Dedekind domain $\R$ is a principal ideal domain if and only if
for every maximal ideal $\mcl{M}$, the set \equ{\mcl{M}\bs
\bigg(\big(\us{\mcl{N}\in \max(Spec(R)),\mcl{N} \neq
\mcl{M}}{\bigcup}\mcl{N}\big)\bigcup \mcl{M}^2\bigg) \neq \es.} Then
we could define the map $\Gs$ similar to the ring of integers
explicitly.
\end{itemize}
\end{example}


\subsection{\bf{On unital sets modulo an ideal in Dedekind type domain}}
We begin this sections with a remark. 
\begin{remark}
Let $\R$ be a commutative ring with unity. Let $k>0$ be a positive
integer. Let $(a_1,a_2,\ldots,a_{k+1})$ be a unital set in $\R$.
Suppose $a_1x_1+a_2x_2+\ldots+a_kx_k+a_{k+1}x_{k+1}=1$ and
$\{x_1,x_2,\ldots,x_k\}$ is also a unital set. i.e.
$b_1x_1+b_2x_2+\ldots+b_kx_k=1$ then we have
\equ{(a_1+a_{k+1}x_{k+1}b_1)x_1+(a_2+a_{k+1}x_{k+1}b_2)x_2+\ldots+
(a_k+a_{k+1}x_{k+1}b_k)x_k=1}
i.e. there exist $t_1,t_2,\ldots,t_k\in (a_{k+1})$ such that the
set $\{a_1+t_1,a_2+t_2,\ldots,a_k+t_k\}$ is unital in $\R$.
\end{remark}
Now prove the following two important useful
Propositions~[\ref{prop:CMHNDDIMONE},~\ref{prop:UnitalDD}] on unital sets modulo an ideal in a Dedekind type domain.
\begin{prop}
\label{prop:CMHNDDIMONE} Let $\R$ be a Dedekind type domain.
Let $\mcl{I} \sbnq \R$ be an ideal. Let $x,y \in \R$.
Suppose $(x)+(y)+\mcl{I}=\R$. Then there exist $a,b \in \R$ such that
$ax+by\equiv 1\mod \mcl{I}$ and $(a)+(b)=\R$.
\end{prop}
\begin{proof}
Suppose $a_1x+b_1y+i=1$ for some $i \in \mcl{I}$. 
Let $\mcl{M}_1,\mcl{M}_2,\ldots,\mcl{M}_r$ be the common maximals ideals in $\R$ containing $a_1,b_1$. 
Let $\mcl{N}_1,\mcl{N}_2,\ldots,\mcl{N}_s$ be the remaining maximal ideals in $\R$ that contain $a_1$. Then $i\nin \us{k=1}{\os{r}{\bigcup}} \mcl{M}_k$.
Let $\mcl{J}=(i)\us{k=1}{\os{s}{\prod}}\mcl{N}_k$. So $\mcl{J}\nsubseteq \mcl{M}_k, 1\leq k\leq r$. Then by Proposition~\ref{prop:IdealAvoidance} choose 
$j\in \mcl{J}\bs \us{k=1}{\os{r}{\bigcup}}\mcl{J}\mcl{M}_k \neq \es$. Then $j\in (i)\subs \mcl{I}$. 
Since $b_1\in \mcl{M}_k,j\nin \mcl{M}_k \Ra b_1+j\nin \mcl{M}_k, 1\leq k\leq r$. Also $b_1\nin \mcl{N}_k,j\in \mcl{N}_k\Ra b_1+j\nin \mcl{N}_k, 1\leq k\leq s$.
Let $a=a_1,b=b_1+j$. This implies $(a)+(b)=\R$. Also \equ{ax+by=a_1x+b_1y+jy\equiv a_1x+b_1y \equiv 1\mod \mcl{I}.} Now the proposition follows.
\end{proof}
\begin{prop}
\label{prop:UnitalDD} Let $\R$ be a Dedekind type domain.
Let $\mcl{I}\sbnq \R$ be an ideal. Let $r>1$ be a positive integer.
Suppose $(a_1,a_2,\ldots,a_r)$ is a unital set modulo $\mcl{I}$.
Then there exist $t_1,t_2,\ldots,t_r\in \mcl{I}$ such that the set
$\{a_1+t_1,\ldots,a_r+t_r\}$ is unital in $\R$.
\end{prop}
\begin{proof}
Let $a_1x_1+\ldots+a_rx_r+i=1$ for $i \in \mcl{I}$. If $i=0$ then
there is nothing to prove. So assume $i \neq 0$.

Suppose two of the $x_j's$ are non-zero. Say $x_1 \neq 0,x_2 \neq
0$. Let $e=a_2x_2+\ldots+a_rx_r+i$. Then $a_1x_1+e=1$.  By Lemma~\ref{lemma:FundLemma} there exists $t\in \R$ such that
$(x_1-te)+(x_2)=\R$. We also have $(x_1-te)a_1+(1+ta_1)e=1$. So we
get
\equ{a_1(x_1-te)+a_2x_2(1+ta_1)+a_3x_3(1+ta_1)+\ldots+a_rx_r(1+ta_1)+i(1+ta_1)=1}
Now we have both
$(x_1-te)+(x_2)=\R,(x_1-te)+(1+ta_1)=\R$ so $(x_1-te)+(x_2(1+ta_1))=\R$. There exist $s_1,s_2 \in \R$
such that \equ{(x_1-te)s_1+x_2(1+ta_1)s_2=1 \Ra
(x_1-te)s_1i(1+ta_1)+x_2(1+ta_1)s_2i(1+ta_1)=i(1+ta_1)} Hence we get
\equ{(a_1+s_1(1+ta_1)i)(x_1-te)+(a_2+s_2(1+ta_1)i)x_2(1+ta_1)+a_3x_3(1+ta_1)+\ldots+
a_rx_r(1+ta_1)=1}
So choosing $t_1=is_1(1+ta_1),t_2=is_2(1+ta_2)\in
\mcl{I},t_3=t_4=\ldots=0$ we get $\{a_i+t_i:1 \leq i \leq r\}$ is a
unital set.

Suppose all but one of the $x_i$ is zero. Say $x_1 \neq 0$ and
$x_2,x_3,\ldots,x_r=0$. Then $a_1x_1+i=1$ and suppose $a_j=0$ for
some $j\geq 2$. Then choose $t_j=i,t_l=0$ for $l \neq j$ and we have
the set $\{a_1,a_2,\ldots,a_{j-1},a_j+t_j,a_{j+1},\ldots,a_r\}$ is
unital.

Now if $x_1\neq 0, x_2=x_3=\ldots=x_r=0,a_2,a_3,\ldots,a_r \neq 0$
and $r \geq 3$ then we could choose $x_2=a_3,x_3=-a_2$ and we have
at least two of the $x_j's$ non-zero which is considered before.

For the case $r=2$ let $(a_1)+(a_2)+\mcl{I}=\R$. Now using 
Proposition~\ref{prop:CMHNDDIMONE} we have that there exist $x_1,x_2$
such that $(x_1)+(x_2)=\R$ and $a_1x_1+a_2x_2+i=1$ for some $i \in
\mcl{I}$. So if $x_1y_1+x_2y_2=1$ then $x_1y_1i+x_2y_2i=i$. So we
get $\{a_1+y_1i,a_2+y_2i\}$ is a unital set.
 This completes the proof of Proposition~\ref{prop:UnitalDD}.
\end{proof}


\section{\bf{On the surjectivity of the Chinese remainder reduction map}}
\label{sec:CRTSURJMOSTGENCASE}
In order to prove surjectivity of the map in Theorem~\ref{theorem:CRTSURJMOSTGENCASE}
we first observe that the image is invariant under a suitable action of the two 
groups for $k\in \mbb{N}$
\begin{enumerate}
\item $SL_{k+1}=\{A\in M_{(k+1)\times (k+1)}(\R)\mid \Det(A)=1\}$ and
\item $\ti{S}L_{k+1}=\{A\in M_{(k+1)\times (k+1)}(\R)\mid \Det(A)=\pm 1\}$.
\end{enumerate}


\subsection{$SL_{k+1}\operatorname{-}$\bf{Invariance of the image of the Chinese
remainder reduction map}}
Here we define the action of $SL_{k+1}$ on $\mbb{PF}^{k}_{\mcl{I}}$.
\begin{defn}[$SL_{k+1}\operatorname{-}$action]
\label{defn:SLKAction}
Let $\R$ be a commutative ring with unity. Let \equ{\mcl{I}
\in\mcl{INTRAD}(\R)^{*}.} There is a well defined left action of
$SL_{k+1}(R)$ as follows. Let $g \in SL_{k+1}(\R)$. Define
\equ{L_g=r_{g^{-1}}:\mbb{PF}^{k}_{\mcl{I}}\lra\mbb{PF}^{k}_{\mcl{I}}}
given by $L_g([a_0:a_1:a_2\ldots:a_k])=g\bullet
([a_0:a_1:a_2\ldots:a_k])=r_{g^{-1}}([a_0:a_1:a_2\ldots:a_k])=[b_0:b_1:b_2:\ldots:b_k],$
where \equ{(b_0,b_1,b_2,\ldots,b_k)=(a_0,a_1,a_2,\ldots,a_k)g^{-1}.}
This action can be extended to a product of such projective spaces in a similar manner.
\end{defn}
\begin{lemma}[$SL_{k+1}\operatorname{-}$Invariance of the image]
\label{lemma:SLkRInvariance} Let $\R$ be a commutative ring with
unity. Let $\mcl{I}_i \in \mcl{INTRAD}(\R)^{*}: 1\leq i \leq n$ be
finitely many pairwise co-maximal ideals in $\R$. Let
\equ{\mcl{I}=\us{i=1}{\os{n}{\prod}}\mcl{I}_i.} The image of the
Chinese remainder reduction map is a union of $SL_{k+1}\operatorname{-}$orbits.
\end{lemma}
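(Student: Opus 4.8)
The plan is to show directly that the image of the chinese remainder reduction map is stable under the $SL_{k+1}$-action defined just above, and then to note that a subset of a $G$-set which is $G$-stable is automatically a union of $G$-orbits. So the whole content is the stability assertion. First I would unwind the definitions: an element of the codomain $\mbb{PF}^{k}_{\mcl{I}_1} \times \cdots \times \mbb{PF}^{k}_{\mcl{I}_n}$ lies in the image precisely when there is a single vector $(a_0,\ldots,a_k) \in \R^{k+1}$ generating the unit ideal whose reduction $[a_0:\ldots:a_k]$ modulo each $\mcl{I}_i$ gives the $i$-th component (this uses that $\mcl{I} = \prod \mcl{I}_i$, so a class mod $\mcl{I}$ determines, and under CRT is determined by, its tuple of classes mod the $\mcl{I}_i$). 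Call such a tuple "liftable".

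The key step is then: if a tuple is liftable, so is $g \bullet (\text{tuple})$ for every $g \in SL_{k+1}(\R)$. Given a lift $(a_0,\ldots,a_k)$, I would simply set $(b_0,\ldots,b_k) = (a_0,\ldots,a_k)g^{-1}$ and check two things. (i) The set $\{b_0,\ldots,b_k\}$ still generates the unit ideal: indeed $g^{-1} \in SL_{k+1}(\R) \subseteq GL_{k+1}(\R)$, so right-multiplication by $g^{-1}$ is an $\R$-module automorphism of $\R^{k+1}$ carrying the submodule generated by the coordinates of $(a_0,\ldots,a_k)$ onto that generated by the coordinates of $(b_0,\ldots,b_k)$; since the former is all of $\R$, so is the latter. (Equivalently, $(a_0,\ldots,a_k) = (b_0,\ldots,b_k)g$, so each $a_j$ is an $\R$-combination of the $b_i$, forcing $\sum (b_i) \supseteq \sum (a_j) = \R$.) (ii) Reduction commutes with the action: the matrix $g^{-1}$ has entries in $\R$, hence its reduction mod $\mcl{I}_i$ makes sense and equals the image of $g^{-1}$ under $SL_{k+1}(\R) \to SL_{k+1}(\R/\mcl{I}_i)$, which is the inverse of the reduction of $g$; therefore the reduction mod $\mcl{I}_i$ of $(b_0,\ldots,b_k)$ equals $(\overline{a_0},\ldots,\overline{a_k})\,\overline{g}^{-1}$, which by the very definition of the $SL_{k+1}$-action on $\mbb{PF}^{k}_{\mcl{I}_i}$ is exactly the $i$-th component of $g \bullet (\text{tuple})$. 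Running this over all $i = 1,\ldots,n$ shows $g \bullet (\text{tuple})$ is liftable, with explicit lift $(a_0,\ldots,a_k)g^{-1}$.

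Having established stability, I would conclude: for any point $x$ in the image and any $g \in SL_{k+1}(\R)$ we have $g \bullet x$ in the image, so the image contains the full orbit $SL_{k+1}(\R) \bullet x$ of each of its points; hence the image is the union of those orbits, i.e. a union of $SL_{k+1}$-orbits. I expect no serious obstacle here — the one point that needs a word of care is that the action is well defined on projective classes (changing the representative $(a_0,\ldots,a_k)$ within its $\sim_{GR}$-class changes $(b_0,\ldots,b_k)$ within its class), but that is precisely the content of the preceding definition of $L_g$, which we are entitled to assume. The argument is purely formal linear algebra over $\R$ together with functoriality of reduction modulo an ideal; the genuinely substantive input (the usual CRT giving surjectivity onto the product, which is what makes "liftable" a nontrivial condition) is deferred to the proof of Theorem~\ref{theorem:CRTSURJMOSTGENCASE} itself, where this lemma will be combined with the orbit structure.
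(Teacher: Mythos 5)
Your proposal is correct and is essentially the paper's own argument: the paper simply asserts the equivariance $g\bullet\gs([a_0:\ldots:a_k])=\gs(g\bullet[a_0:\ldots:a_k])$ of the chinese remainder reduction map and concludes that the image is a union of orbits, which is exactly the stability-plus-orbit-decomposition argument you give, with your steps (i) and (ii) supplying the details the paper leaves implicit.
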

\begin{proof}

If \equ{\gs:\mbb{PF}^k_{\mcl{I}} \lra
\us{i=1}{\os{n}{\prod}}\mbb{PF}^k_{\mcl{I}_i}} then the Chinese
remainder reduction map $\gs$ is always $SL_{k+1}\operatorname{-}$invariant in the
sense that for any $g\in SL_{k+1}(\R)$ we have
\equ{g\bullet\gs([a_0:a_1:a_2\ldots:a_k])=\gs(g\bullet[a_0:a_1:a_2\ldots:a_k]).}
Hence this result follows.
\end{proof}
\begin{note}
Let $\ti{S}L_{k+1}(\R)=\{A \in M_{k+1}(\R)\mid \Det(A)=\pm 1\}$. We can
similarly conclude like in Lemma~\ref{lemma:SLkRInvariance} that the
image of the Chinese remainder reduction map is $\ti{S}L_{k+1}(\R)\operatorname{-}$
invariant and it is a union of $\ti{S}L_{k+1}(\R)\operatorname{-}$orbits.
\end{note}


\subsection{Surjectivity of the Chinese remainder reduction map} 
Here in this section we prove the first
main Theorem~\ref{theorem:CRTSURJMOSTGENCASE} of this article.
\begin{proof}
The theorem holds for $k=1$ and any $l>0$ as the proof is immediate. Now we prove by induction on $k$. Let
\equ{([a_{10}:a_{11}:\ldots:a_{1l}],\ldots,[a_{k0}:a_{k1}:\ldots:a_{kl}])
\in \mbb{PF}^{l}_{\mcl{Q}_1} \times \mbb{PF}^{l}_{\mcl{Q}_2}
\times \ldots \times \mbb{PF}^{l}_{\mcl{Q}_k}} By induction we
have an element $[b_0:b_1:b_2:\ldots:b_l] \in
\mbb{PF}^{l}_{\mcl{Q}_2\mcl{Q}_3\ldots\mcl{Q}_k}$ representing the
last $k-1$ elements. Now consider the matrix \equ{A=\begin{pmatrix}
{\mcl{Q}_1 \lra} & a_{10}  & a_{11}  & \cdots & a_{1,l-1} & a_{1l}\\
{\mcl{Q}_2\ldots \mcl{Q}_k \lra} & b_0& b_1& \cdots & b_{l-1} & b_l
\end{pmatrix}}
Now one of the elements in the first row is not in $\mcl{M}_1$. By
finding inverse of this element modulo $\mcl{Q}_1$ and hence by a
suitable application of $\ti{S}L_{l+1}(\R)$ matrix the matrix $A$ can
be transformed to the following matrix $B$, where we replace the
unique non-zero entry in the first row by $1$.
\equ{B=\begin{pmatrix}
{\mcl{Q}_1 \lra} & 1  & 0  & \cdots & 0 & 0\\
{\mcl{Q}_2\ldots \mcl{Q}_k \lra} & c_0& c_1& \cdots & c_{l-1} & c_l
\end{pmatrix}}
If $c_0$ is a unit $\mod \mcl{Q}_2\ldots \mcl{Q}_k$ then let $vc_0\equiv 1 \mod \us{i=2}{\os{k}{\prod}}\mcl{Q}_i$.
Now the matrix $B$ represents the same elements as the matrix 
\equ{B'=\begin{pmatrix}
{\mcl{Q}_1 \lra} & 1  & 0  & \cdots & 0 & 0\\
{\mcl{Q}_2\ldots \mcl{Q}_k \lra} & 1& vc_1& \cdots & vc_{l-1} & vc_l
\end{pmatrix}}
So this reduces to usual Chinese Remainder Theorem.

Otherwise, if $c_0$ is not unit $\mod \mcl{Q}_2\ldots \mcl{Q}_k$ then suppose without loss of generality \equ{c_0 \in \mcl{M}_2\mcl{M}_3\ldots\mcl{M}_r\bs
\mcl{M}_{r+1}\cup\mcl{M}_{r+2}\cup\ldots\cup\mcl{M}_k.} Let
$\us{i=0}{\os{l}{\sum}}c_ix_i=1$. Now consider any element $a \in
\mcl{M}_{r+1}\ldots\mcl{M}_k\bs (\mcl{M}_2\cup\ldots\cup\mcl{M}_r) \neq
\es$. Then the matrix \equ{C=\begin{pmatrix}
{\mcl{Q}_1 \lra} & 1  & 0  & \cdots & 0 & 0\\
{\mcl{Q}_2\ldots \mcl{Q}_k \lra} &
c_0+\us{i=1}{\os{l}{\sum}}ac_ix_i=a+c_0(1-ax_0) & c_1& \cdots &
c_{l-1} & c_l
\end{pmatrix}}
is obtained from $B$ by $\ti{S}L_{l+1}(\R)$ matrix. Now the element
\equ{a+c_0(1-ax_0)\nin \mcl{M}_2 \cup \ldots \cup \mcl{M}_k.} Let $u
\in R$ be such that $u(a+c_0(1-ax_0))\equiv 1 \mod
\us{i=2}{\os{k}{\prod}}\mcl{Q}_i$. Then the matrix $C$ represents
the same elements as the matrix $D$. \equ{D=\begin{pmatrix}
{\mcl{Q}_1 \lra} & 1  & 0  & \cdots & 0 & 0\\
{\mcl{Q}_2\ldots \mcl{Q}_k \lra} & 1& uc_1& \cdots & uc_{l-1} & uc_l
\end{pmatrix}}

The elements in the matrix $C$ is in the image of Chinese remainder reduction map
by the usual Chinese Remainder Theorem.

Hence the induction step is completed and Theorem~\ref{theorem:CRTSURJMOSTGENCASE} follows.
\end{proof}
\begin{example}[Construction of a counter example for surjectivity in one dimension]
\label{example:CounterExample}
Let $\R=\mbb{K}[x,y],$ where $\mbb{K}$ is a field. Consider the prime
ideals $\mcl{P}_1=(x-1)$, $\mcl{P}_2=(y-1)$. We note that these are
not finite intersection of ideals whose radicals are maximal ideals.
However here we observe that
$\mcl{P}_1\mcl{P}_2=\mcl{P}_1\cap \mcl{P}_2$ by unique factorization
domain property. Now consider the Chinese remainder reduction map
\equ{\mbb{PF}^1_{\mcl{P}_1\mcl{P}_2} \lra \mbb{PF}^1_{\mcl{P}_1}
\times \mbb{PF}^1_{\mcl{P}_2}} This map is not surjective.
Consider the element $([1:0],[0:1]) \in \mbb{PF}^1_{\mcl{P}_1}
\times \mbb{PF}^1_{\mcl{P}_2}$. If $a,b \in \R$ represent this
element via congruence conditions then we get

\equa{a \equiv 1\mod (x-1),a \equiv 0 \mod (y-1)\\
b \equiv 0\mod (x-1),b \equiv 1 \mod (y-1)}

So we get $a=(y-1)t$ and $a-1=-(x-1)u$ implying $(y-1)t+(x-1)u=1$ which yields a contradiction. 
So via congruences we cannot obtain a representing element pair $(a,b)$. 
Now let $a,b \in \R$ generate the unit ideal such that 
$[a:b]=[1:0] \in\mbb{PF}^1_{\mcl{P}_1}$ and $[a:b]=[0:1] \in \mbb{PF}^1_{\mcl{P}_2}$
then $(x-1)\mid b,(y-1) \mid a$. So we have the ideal $(a,b)\subs(x-1,y-1)$ 
which is impossible. This proves that the Chinese remainder reduction map is not
surjective.
\end{example}


\section{\bf{Surjectivity of the map $SL_k(\R) \lra SL_k(\frac{\R}{\mcl{I}})$ and the unital 
set condition with respect to an ideal}}
\label{sec:SurjModIdeal}
In this section we consider the reduction map
\equ{SL_k(\R) \lra SL_k(\frac{\R}{\mcl{I}})} and prove the second main Theorem~\ref{theorem:SurjModIdeal} of
this article.
\begin{proof}
For $k=1$ the proof is immediate. So assume $k>1$. Clearly all
elementary matrices $E_{ij}(r),r \in \R, i \neq j$ are in the image.
Now consider a diagonal matrix
$diag(d_{11}=d_1,d_{22}=d_2,\ldots,d_{kk}=d_k)$ such that
\equ{d_{1}d_{2}\ldots d_{k} \equiv 1 \mod \mcl{I}.} Let
$n=d_{1}d_{2}\ldots d_{k}-1 \in \mcl{I}$.

Define a matrix

\equ{E=\matk e}

with $e_{k1}=nz,e_{12}=e_{23}=e_{34}=\ldots=e_{(k-1)k}=n$. Let
\equ{e_{ii}=d_{i}+ \ga^i_1n + \ga^i_2n^2+ \ldots
\ga^i_{k-1}n^{k-1}\in \R[\ga^i_j:1 \leq i \leq k,1 \leq j \leq
(k-1)]} be a polynomial representing a symbolic respective $n\operatorname{-}$adic
expansion modulo $(n^k)$. Choose the rest of the entries in the
matrix $E$ to be zero. Now this matrix has determinant given by
\equ{e_{11}e_{22}\ldots e_{kk} - (-1)^{k}n^{k}z.} The sum of ideals
$(e_{11}e_{22}\ldots e_{kk})+(n^k)=(1)$ in the polynomial ring
$\R[\ga^i_j:1 \leq i \leq k,1 \leq j \leq (k-1)]$ because
$(e_{11}e_{22}\ldots e_{kk})+(n)=(d_{1}d_{2}\ldots d_{k})+(n)=(1)$
and using radical of ideals. i.e.
\equa{rad(A+rad(B))&=rad(rad(A)+B)\\
&=rad(rad(A)+rad(B))=rad(A+B) \text{ for ideals }A,B.} So there exist $w,\ga\in \R[\ga^i_j:1 \leq i \leq k,1 \leq j
\leq (k-1)]$ such that \equ{\ga e_{11}e_{22}\ldots e_{kk} + wn^k=1.}
If we choose for the symbols $\ga^i_j$ elements of $R$ such that
\equ{e_{11}e_{22}\ldots e_{kk} \equiv 1\mod n^k} then we get $\ga
\equiv 1\ mod\ n^k$. So we can solve for $z$ so that the determinant
\equ{e_{11}e_{22}\ldots e_{kk} - (-1)^{k}n^{k}z=1.}

To solve first consider $k=2$. If $d_1d_2=1+t_1n+t_2n^2 + \ldots +
(n^k)$ be its symbolic $n\operatorname{-}$adic expansion then we should have
$\ga^1_1d_2+\ga^2_1d_1+t_1 \equiv 0 \mod\ n$. Such an equation is
solvable say for $\ga^1_1$ or for $\ga^2_1$ as $d_1,d_2$ are units
mod $n^r$ for all $r$. To obtain a value $t_1$ we know that
$d_1d_2-1=n\ti{t}_1$ for some $\ti{t}_1\in \R$. So choose
$t_1=\ti{t}_1$ and there are no remaining $t_i$ as $k=2$ here in
this case.

For a general $k$. Let the symbolic $n\operatorname{-}$adic expansions be given by
\equa{d_1d_2\ldots d_k=1+t_1n+t_2n^2+\ldots+t_kn^{k-1}+(n^k),\\
d_2d_3\ldots d_k= s_0+s_1n+s_2n^2+\ldots+s_{k-1}n^{k-1}+(n^k)\\
e_{11}=d_1+\ga_1n+\ga_2n^2+\ldots+\ga_{k-1}n^{k-1}+ (n^k).}

Fix a section $sec:\frac{\R}{(n)} \lra \R$. Recursively pick
representative values in the image of $sec$ in $\R$ for $t_i$ for
$i=1,\ldots,(k-1)$, and $s_i$ for $i=0,\ldots,(k-1)$. Let
$e_{ii}=d_i$ for all $i \geq 2$ then \equ{e_{11}e_{22}\ldots
e_{kk}=d_1d_2\ldots d_k+\ga_1nd_2d_3\ldots d_k+\ga_2n^2d_2d_3\ldots
d_k+\ldots+ (n^k).}

So we should have $s_0\ga_1+t_1 \equiv 0 \mod\ n$. So solve for
$\ga_1$ as $s_0$ is a unit $\mod\ n$. Now solve for $\ga_2$ because
$s_0\ga_2+\ldots \equiv 0 \mod\ n$ recursively by carrying the
addendum of the previous term $s_0\ga_1+t_1$ which are higher
powers of $n$ and so on for the rest of the $\ga_i's$. The $\ga_i$
gets multiplied by $s_0$ which is a unit mod $n$. So solving for
$\ga_i$ is possible.

We have proved that the diagonal determinant one matrices in
$SL_k(\frac{\R}{\mcl{I}})$ are in the image of the reduction map
$\gs:SL_k(\R) \lra SL_k(\frac{\R}{\mcl{I}})$ by choosing
$n=d_1d_2\ldots d_k-1 \in \mcl{I}$ for each
$diag(d_1,d_2,\ldots,d_k) \in SL_k(\frac{\R}{\mcl{I}})$.

Now we prove the following claim. We note here that $k>1$.
\begin{claim}
All matrices in $SL_k(\frac{\R}{\mcl{I}})$ can be reduced to identity
by elementary determinant one matrices and matrices of the form
$diag(1,\ldots,u,u^{-1},\ldots,1),$ where $u \in
\mcl{U}(\frac{\R}{\mcl{I}})$ a unit if $\mcl{I}$ satisfies the unital
set condition.
\end{claim}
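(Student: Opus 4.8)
The plan is to prove the claim by performing a standard Gaussian-elimination argument on a matrix $A = [a_{ij}] \in SL_k(\R/\mcl{I})$, but with the crucial twist that, over the quotient ring $\R/\mcl{I}$, we may not have a pivot entry that is an actual unit; instead the unital set condition supplies, after an elementary transformation, an entry that \emph{is} a unit modulo $\mcl{I}$, which is all we need since we are already working inside $SL_k(\R/\mcl{I})$. So first I would observe that the first column entries $a_{11}, a_{21}, \ldots, a_{k1}$ form a unital set in $\R/\mcl{I}$: indeed the first column of an invertible matrix generates the unit ideal (its cofactor expansion along the first column expresses $1$ as a combination of the $a_{i1}$). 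By the unital set condition applied to this set (here $k \geq 2$ is used), there is an element $j \in (a_{21}, \ldots, a_{k1})$ such that $a_{11} + j$ is a unit modulo $\mcl{I}$; writing $j = \sum_{i \geq 2} r_i a_{i1}$, the transformation adding $r_i$ times row $i$ to row $1$ for each $i \geq 2$ is a product of elementary matrices $E_{1i}(r_i)$, and it replaces $a_{11}$ by a unit of $\R/\mcl{I}$.

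Next, with a unit $u = a_{11}$ in the $(1,1)$ position, I would clear out the rest of the first column by subtracting $a_{i1}u^{-1}$ times row $1$ from row $i$ for $i = 2, \ldots, k$ (again elementary matrices), and then clear out the rest of the first row by column operations of the same type, arriving at a block matrix $\mathrm{diag}(u) \oplus A'$ with $A' \in GL_{k-1}(\R/\mcl{I})$, in fact with $\det A' = u^{-1}$ since the total determinant is still $1$. Then I would induct on $k$: the case $k=1$ is vacuous (the claim is only asserted for $k > 1$, and a $1\times 1$ determinant-one matrix is already the identity), and for the inductive step I apply the same reduction to $A'$, which is a determinant-$u^{-1}$ matrix rather than determinant $1$ — so I would first peel off a factor $\mathrm{diag}(u^{-1}, u, 1, \ldots, 1)$ of the allowed form (or absorb it at the end) to make $A'$ have determinant one, or more cleanly carry the reduction through for matrices of arbitrary unit determinant, ending with a single diagonal matrix $\mathrm{diag}(u_1, u_2, \ldots, u_k)$ with $\prod u_i = 1$. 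Finally, such a diagonal matrix is reduced to the identity by the identity $\mathrm{diag}(u, u^{-1}) = $ (a product of elementary matrices of the standard Whitehead form) composed with the allowed generators $\mathrm{diag}(1, \ldots, u, u^{-1}, \ldots, 1)$; concretely one repeatedly moves the ``excess'' unit down the diagonal using $\mathrm{diag}(u_1, u_2, \ldots) = \mathrm{diag}(u_1, u_1^{-1}, 1, \ldots) \cdot \mathrm{diag}(1, u_1 u_2, u_3, \ldots, u_k)$ and recurses, terminating because the last entry is forced to be $1$ by the determinant condition.

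The main obstacle — and the only place where the hypothesis does real work — is the very first step: ensuring a unit pivot exists. Over a general quotient $\R/\mcl{I}$ the entry $a_{11}$ and indeed every entry of the first column can fail to be a unit, so without the unital set condition the elimination cannot even start; this is exactly why Definition~\ref{defn:UnitalSetCond} is phrased in terms of unital sets of size $k \geq 2$, matching the fact that each column of a matrix in $SL_k$ is a unital $k$-tuple. A minor bookkeeping point is that after the first elimination step the lower-right block has determinant a unit rather than $1$, so the induction must either be stated for $GL_k$ with unit determinant or must carry along the diagonal correction factors; I would handle this by formulating the reduction for $\mathrm{diag}(1,\ldots,1,u)\cdot SL_k$ cosets, which keeps all intermediate matrices of the two allowed types. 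Everything else is the routine Whitehead-lemma manipulation, which I would not spell out in full.
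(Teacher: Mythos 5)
Your proposal follows essentially the same route as the paper's proof: use the unital set condition to manufacture a unit pivot via an elementary transformation, Gaussian-eliminate to a diagonal matrix, and dispose of the diagonal with Whitehead-type identities built from the allowed generators $diag(1,\ldots,u,u^{-1},\ldots,1)$. The one place where your argument is looser than the paper's is the very first invocation of USC: Definition~\ref{defn:UnitalSetCond} is stated for sets that generate the unit ideal of $\R$ itself, whereas the first column of a matrix in $SL_k(\frac{\R}{\mcl{I}})$, once lifted to $\R$, is only guaranteed to be unital modulo $\mcl{I}$. The paper bridges this by writing $1=\sum_i x_i a_{i1}+i$ with $i\in\mcl{I}$, applying USC to the $(k+1)$-element unital set $\{a_{11},\ldots,a_{k1},i\}\subs\R$, and then discarding the $i$-component of the resulting element $j$, which is harmless since it changes $a_{11}+j$ only by an element of $\mcl{I}$ and so preserves its being a unit mod $\mcl{I}$. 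With that one-line patch your argument goes through; the rest, including the determinant bookkeeping for the lower-right block, is the routine elimination that the paper's (much terser) proof simply asserts.
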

\begin{proof}[Proof of Claim]
To prove this we observe that we can reduce any element to identity
using elementary matrices and matrices of the form
\equ{diag(1,\ldots,u,u^{-1},\ldots,1),} where $u \in
\mcl{U}(\frac{\R}{\mcl{I}})$ a unit. This reduction can be done
because if $(a_1,a_2,\ldots,a_k)$ is a row then there exists an
element $i \in \mcl{I}$ such that $\{a_1,a_2,\ldots,a_k,i\}$ is
unital. Since $\mcl{I}$ satisfies the $USC$ there exists
$j \in (a_2,\ldots,a_k,i)$ such that $a_1+j$ is a unit modulo
$\mcl{I}$. Now the element $i$ can be ignored so that we can bring a
unit $\mod{I}$ in a row by applying only elementary determinant one
matrices as column operations. This proves the claim for
$SL_k(\frac{\R}{\mcl{I}})$.
\end{proof}

Continuing with the proof of main Theorem~\ref{theorem:SurjModIdeal} , we observe that
all matrices are in the image i.e. the reduction map 
$\gs:SL_k(\R) \lra SL_k(\frac{\R}{\mcl{I}})$ is onto. This finishes the proof of
Theorem~\ref{theorem:SurjModIdeal}.
\end{proof}

\begin{cor}
\label{cor:SurjModIdeal} Let $\R$ be a commutative ring with unity.
Let $\mcl{I} \sbnq \R$ be an ideal contained in finitely many maximal
ideals. Then the reduction map \equ{SL_k(\R) \lra
SL_k(\frac{\R}{\mcl{I}})} is onto for any $k\in \mbb{N}$.
\end{cor}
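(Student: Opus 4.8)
The plan is to deduce the corollary directly from Theorem~\ref{theorem:SurjModIdeal} by verifying that an ideal $\mcl{I}$ contained in finitely many maximal ideals automatically satisfies the unital set condition of Definition~\ref{defn:UnitalSetCond}. Once this verification is in place, the corollary is immediate: Theorem~\ref{theorem:SurjModIdeal} says exactly that the reduction map $SL_k(\R) \lra SL_k(\frac{\R}{\mcl{I}})$ is surjective whenever $\mcl{I}$ satisfies $USC$.

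So the first and only substantive step is to check $USC$ for such an $\mcl{I}$. Let $\{a_1,a_2,\ldots,a_k\}\subs\R$ be a unital set with $k\geq 2$, and let $\mcl{J}=\mcl{I}$, which by hypothesis is contained in only finitely many maximal ideals. Then I would invoke Theorem~\ref{theorem:Unital} verbatim: it produces an element $a\in(a_2,\ldots,a_k)$ such that $a_1+a$ is a unit modulo $\mcl{J}=\mcl{I}$. That is precisely the conclusion required by Definition~\ref{defn:UnitalSetCond}, with $j=a$. Hence $\mcl{I}$ satisfies the unital set condition.

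With $USC$ established, apply Theorem~\ref{theorem:SurjModIdeal} to conclude that $SL_k(\R)\lra SL_k(\frac{\R}{\mcl{I}})$ is onto, and the corollary follows. (For $k=1$ there is nothing to prove, as already noted in the proof of Theorem~\ref{theorem:SurjModIdeal}.) I do not expect any real obstacle here: the corollary is essentially a packaging of Theorem~\ref{theorem:Unital} into the language of the unital set condition and then a citation of Theorem~\ref{theorem:SurjModIdeal}. The one point worth stating explicitly, as the Note preceding the corollary already hints, is that Theorem~\ref{theorem:Unital} is exactly the mechanism that lets one bring a unit modulo $\mcl{I}$ into any given row by determinant-one column operations, which is all the machinery inside the proof of Theorem~\ref{theorem:SurjModIdeal} actually needs.
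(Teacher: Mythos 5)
Your proposal is correct and is essentially identical to the paper's own proof: the paper likewise observes that an ideal contained in finitely many maximal ideals satisfies the unital set condition by Theorem~\ref{theorem:Unital}, and then cites Theorem~\ref{theorem:SurjModIdeal}. Nothing further is needed.
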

\begin{proof}[Proof of Corollary]
For $k=1$ the proof is immediate. For $k>1$ this corollary
follows from the fact that any ideal $\mcl{I}$ which is contained in
finitely many maximal ideals satisfies the $USC$ using Proposition~\ref{prop:Unital}.
\end{proof}


\subsection{\bf{An important consequence of unital lemma}}
\label{sec:ImportantConsequenceUnitalLemma}
In this section we prove the following proposition.
\begin{prop}[A proposition on elementary row vector of dimension more than one]
\label{prop:ElementaryRowVector} Let $\mcl{\R}$ be a commutative
ring with unity. Let $\mcl{I}$ be an ideal which is contained only
in finitely many maximal ideals. Let $k>1$ be a positive integer.
Let $\{a_1,a_2,\ldots,a_k\} \subs \R$ be a unital set i.e.
$\us{i=1}{\os{k}{\sum}}(a_i)=\R$. Then there exists a matrix $g$ in
$SL_{k}(\R)$ such that \equ{(a_1,a_2,\ldots,a_k)g\equiv
(1,0,\ldots,0) \mod \mcl{I}} For $k=1$ the existence of such a
matrix $g$ need not hold.
\end{prop}
We begin with a lemma which is stated as follows.
\begin{lemma}
\label{lemma:wedge} Let $\R$ be a ring. Let $k>1$ be a positive
integer. Let $(a_1,a_2,\ldots,a_k) \in \R^k$ be a vector such that
$a_i$ is a unit for some $1 \leq i \leq k$. Then there exist
$k\operatorname{-}$vectors $\{v_1,v_2,\ldots,v_k\} \subs \R^{k-1}$ such that
\equ{v_1\wedge v_2\wedge \ldots \wedge \widehat{v_i}\wedge \ldots
\wedge v_k=a_i}
\end{lemma}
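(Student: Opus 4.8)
The plan is to build the vectors $v_1,\dots,v_k$ explicitly as the columns of a $(k-1)\times k$ matrix that is almost a block of standard basis vectors, and then to recognize the quantity $v_1\wedge\dots\wedge\widehat{v_j}\wedge\dots\wedge v_k$ as the maximal minor of that matrix obtained by deleting its $j$-th column. Write $N=\{1,\dots,k\}\setminus\{i\}$ and let $\{e_m : m\in N\}$ be the standard basis of $\R^{k-1}$, where it is convenient to index the $k-1$ coordinates by the set $N$. Put $\ell=\max N$. I will set $v_m=e_m$ for every $m\in N\setminus\{\ell\}$, set $v_\ell=a_i\,e_\ell$, and set $v_i=\sum_{m\in N}p_m e_m$ with scalars $p_m\in\R$ to be chosen. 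The claim to be proved is that, for a suitable choice of the $p_m$, one has $v_1\wedge\dots\wedge\widehat{v_j}\wedge\dots\wedge v_k=a_j$ for \emph{every} $j\in\{1,\dots,k\}$; in particular it holds for the given index $i$.

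Computing the $k$ maximal minors is then a routine cofactor expansion in which most columns are standard basis vectors, so each minor collapses to an identity block times a small residual determinant. Deleting $v_i$ leaves the columns $e_m$ ($m\in N$), with $e_\ell$ scaled by $a_i$, which form a diagonal matrix; that minor equals $a_i$ with no sign ambiguity, which is precisely why $v_\ell$ was scaled by $a_i$. Deleting $v_\ell$ leaves the $e_m$ ($m\in N\setminus\{\ell\}$) together with $v_i$, and expanding along the coordinate-$\ell$ row (where only $v_i$ contributes, via $p_\ell$) gives $\epsilon_\ell\,p_\ell$ times an identity block for some sign $\epsilon_\ell$; we then set $p_\ell=\epsilon_\ell a_\ell$. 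Finally, for $j\in N\setminus\{\ell\}$, deleting $v_j=e_j$ frees up exactly the coordinates $j$ and $\ell$, and successive expansion along those two rows reduces the minor to $\epsilon_j\,p_j a_i$ for some sign $\epsilon_j$; here I use the hypothesis that $a_i$ is a unit to set $p_j=\epsilon_j a_i^{-1}a_j$, which forces the minor to be $a_j$. This determines $v_i$ completely, hence all the $v_j$, and establishes the lemma.

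There is no deep obstacle here: the construction is pure linear algebra and the only care required is tracking the signs $\epsilon_\ell,\epsilon_j$ coming from the cofactor expansions, which depend on the positions of $i$ and $\ell$ among $1,\dots,k$. Since each ambiguous sign is absorbed into the free choice of the corresponding $p_m$ (the sign of $p_\ell$ cancels $\epsilon_\ell$, the sign of $p_j$ cancels $\epsilon_j$, while the $v_i$-minor carries no sign), this is bookkeeping rather than a genuine difficulty. I will also note the small case $k=2$ separately: the matrix is then $1\times 2$, the ``wedges'' are just its two entries, and $v_i=a_\ell$, $v_\ell=a_i$ works directly; and the argument is unaffected by whether $i=k$ or $i<k$. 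The role of the unit hypothesis is exactly visible in the last step, where a residual factor of $a_i$ must be inverted to realize the remaining $a_j$.
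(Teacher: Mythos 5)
Your construction is correct and is essentially the paper's own argument: both take $k-2$ of the columns to be standard basis vectors, one column to be a basis vector scaled by the unit $a_i$, and the remaining column to be a general combination whose coefficients are $\pm a_\ell$ on the shared coordinate and $\pm a_i^{-1}a_j$ elsewhere, so that each maximal minor collapses to the prescribed $a_j$. The only differences are cosmetic (you index coordinates by $\{1,\dots,k\}\setminus\{i\}$ instead of reducing WLOG to $i=1$, and you absorb the cofactor signs into the free coefficients rather than writing them explicitly), and you correctly prove the stronger statement, used later in the paper, that all $k$ minors can be prescribed simultaneously.
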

\begin{proof}
First consider a unital vector $(a_1,a_2,\ldots,a_k)$ with $a_1$ a
unit without loss of generality. Let
\equa{
v_1&=(a_2,-a_1^{-1}a_3,+a_1^{-1}a_4,\ldots,(-1)^ia_1^{-1}a_i,\ldots,(-1)^ka_1^{-1}a_k)^t\\
&=a_2e^{k-1}_1+\us{i=2}{\os{k-1}{\sum}}(-1)^{i+1}a_1^{-1}a_{i+1}e^{k-1}_i,
v_2=a_1e^{k-1}_1,v_3=e_2^{k-1},\ldots,v_k=e_{k-1}^{k-1}.
} 
Then we immediately observe that for $1 \leq i \leq k$,\equ{v_1\wedge
v_2\wedge \ldots \wedge \widehat{v_i}\wedge \ldots \wedge v_k=a_i}
Similarly if any other component $a_i$ is a unit. Hence the lemma
follows.
\end{proof}
Now we prove Proposition~\ref{prop:ElementaryRowVector} of this section.
\begin{proof}
For $\mcl{I}=\R$ the proof is easy. So assume $\mcl{I}\sbnq \R$.
We note that if $k=1$ and $a_1$ is a unit in $\R$ but $a_1 \not\equiv
1 \mod \mcl{I}$ then $a_1g \equiv 1 \mod \mcl{I}$ does not imply
that $g \in SL_1(\R)$ unless $1+\mcl{I}$ is the set of all units in $\R$.

Now assume $k>1$. Let $(b_1,b_2,\ldots,b_k) \in \R^k$ such that
$\us{i=1}{\os{k}{\sum}}(-1)^{i-1}a_ib_i=1$. Now the vector
$(b_1,b_2,\ldots,b_k)$ is unital. So from Lemma~\ref{lemma:Unital} 
there exist $t_2,t_3,\ldots,t_k\in \R$ such
that the element $c_1=b_1+t_2b_2+\ldots+t_kb_k$ is a unit modulo
$\mcl{I}$.

Now consider the vector $(c_1,b_2,\ldots,b_k)$ which has a unit
$\mod \mcl{I}$. Hence using Lemma~\ref{lemma:wedge} there exist
$k\operatorname{-}$vectors $\{v_1,v_2,\ldots,v_k\} \subs \R^{k-1}$ such that
$v_2\wedge v_3 \wedge \ldots \wedge v_k \in c_1+\mcl{I}$ and
\equ{v_1\wedge v_2\wedge \ldots \wedge \widehat{v_i}\wedge \ldots
\wedge v_k \in b_i +\mcl{I} \text{ if } i>1} Now choose
\equ{w_1=v_1,w_2=v_2-t_2v_1,w_3=v_3+t_3v_1,\ldots,
w_k=v_k+(-1)^{k-1}t_kv_1} Then we have for $i \geq 2$ \equ{w_1\wedge
w_2\wedge \ldots \wedge \widehat{w_i}\wedge \ldots \wedge w_k \in
b_i +\mcl{I}} and $w_2\wedge w_3 \wedge \ldots \wedge w_k \in
b_1+\mcl{I}$. So the following matrix has unit determinant modulo
$\mcl{I}$. i.e. treating each $w_i$ is a column $(k-1)\operatorname{-}$ vector we
have \equ{\Det\mattwofour
{a_1}{a_2}{\ldots}{a_k}{w_1}{w_2}{\ldots}{w_k} \equiv 1\ \mod
\mcl{I}}

So using Theorem~\ref{theorem:SurjModIdeal} there exists a matrix $B
\in SL_k(\R)$ such that we have \equ{B \equiv \mattwofour
{a_1}{a_2}{\ldots}{a_k}{w_1}{w_2}{\ldots}{w_k} \mod \mcl{I}.} We
observe that \equ{(1,0,\ldots,0)B\equiv (a_1,a_2,\ldots,a_k)\mod
\mcl{I}.} So we consider $g=B^{-1}$ and this proposition follows.
\end{proof}

\begin{remark}
If $\mcl{R}$ is a commutative ring with unity and $\mcl{I}\sbnq \mcl{R}$ is an
ideal which is contained in finitely many maximal ideals then for $k>1$ 
Proposition~\ref{prop:ElementaryRowVector} proves that the set
\equ{\{(\ol{a_1},\ol{a_2},\ldots,\ol{a_k})\in
\bigg(\frac{\mcl{R}}{\mcl{I}}\bigg)^k\mid \{a_1,a_2,\ldots,a_k\}\subs \mcl{R}
\text{ is a unital set}\}}
is a transitive orbit under the action of  $SL_k(\mcl{R})$.	
\end{remark}


\section{\bf{Representation of elements of projective spaces associated to ideals}}
\label{sec:RepHigherDimElement}
In this section we prove the following Theorem~\ref{theorem:RepHigherDimElement}
which is stated below. 
\begin{theorem}
\label{theorem:RepHigherDimElement} Let $\R$ be a commutative ring
with unity. Suppose $\R$ is a Dedekind type domain (refer to
Definition~\ref{defn:Good}). Let $\mathit{M}(\R)$ be the monoid
generated by maximal ideals in $\R$. Let $\mcl{I}\in \mathit{M}(\R)$
be a product of maximal ideals. Let $k \geq 2$ be a positive
integer. Let $\mcl{F}$ be any finite set of maximal ideals
containing $V(\mcl{I})$. Let $\Gs=\Gs_{\mcl{F}}$ be the nowhere zero choice monoid
multiplicative map for the monoid $\mathit{M}(\mcl{F})$ from
Theorem~\ref{theorem:Comaximality}. Then the description of the $k\operatorname{-}$
dimensional projective space is given by
\equa{\mbb{PF}^k_{\mcl{I}}&=\{[\Gs(\mcl{J}_0)v_0:\Gs(\mcl{J}_1)v_1:\ldots:\Gs(\mcl{J}_k)v_k]
\bigg\vert v_i\in \bigg(\R\bs \us{\mcl{M}\in
\mcl{F}}{\bigcup}\mcl{M}\bigg),\mcl{I}\subseteq \mcl{J}_i \in \mathit{M}(\mcl{F}),1\leq i\leq k\\
&\us{i=0}{\os{k}{\sum}}\mcl{J}_i=\R,\us{i=0}{\os{k}{\sum}}(\Gs(\mcl{J}_i)v_i)=\R\}.}
\end{theorem}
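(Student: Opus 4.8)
The plan is to prove the two inclusions separately; the inclusion ``$\supseteq$'' is essentially a tautology and the inclusion ``$\subseteq$'' carries all the content. For ``$\supseteq$'': if $\mcl{J}_0,\ldots,\mcl{J}_k\in\mathit{M}(\mcl{F})$ with each $\mcl{J}_i\supseteq\mcl{I}$ and $\sum_{i=0}^k\mcl{J}_i=\R$, and if $v_0,\ldots,v_k\in\R\setminus\bigcup_{\mcl{M}\in\mcl{F}}\mcl{M}$ satisfy $\sum_{i=0}^k(\Gs(\mcl{J}_i)v_i)=\R$, then the last equality says precisely that $\{\Gs(\mcl{J}_0)v_0,\ldots,\Gs(\mcl{J}_k)v_k\}$ is a unital set, so by Definition~\ref{defn:GEQRE} the class $[\Gs(\mcl{J}_0)v_0:\ldots:\Gs(\mcl{J}_k)v_k]$ is a bona fide element of $\mbb{PF}^k_{\mcl{I}}$; the asserted implication ``$\sum_i\mcl{J}_i=\R\Ra\sum_i\Gs(\mcl{J}_i)=\R$'' is nothing but the Comaximality Condition of Theorem~\ref{theorem:Comaximality} and plays no further role here.

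For the inclusion ``$\subseteq$'', fix a representative $(a_0,\ldots,a_k)$ of an element of $\mbb{PF}^k_{\mcl{I}}$, so $\sum_i(a_i)=\R$, and write $\mcl{I}=\prod_{\mcl{M}\in V(\mcl{I})}\mcl{M}^{n_{\mcl{M}}}$ by the Unique Factorization Theorem~\ref{theorem:UniqueFactorizationTheorem}. Since $\R$ is a dedekind type domain (Definition~\ref{defn:Good}), each localization $\R_{\mcl{M}}$ with $\mcl{M}\in\mcl{F}$ is a discrete valuation ring; write $v_{\mcl{M}}(\cdot)$ for the associated valuation, with $v_{\mcl{M}}(0)=\infty$. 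Set $e^{(i)}_{\mcl{M}}=\min\bigl(v_{\mcl{M}}(a_i),n_{\mcl{M}}\bigr)$ for $\mcl{M}\in V(\mcl{I})$ and define $\mcl{J}_i=\prod_{\mcl{M}\in V(\mcl{I})}\mcl{M}^{e^{(i)}_{\mcl{M}}}\in\mathit{M}(\mcl{F})$, so that $\mcl{J}_i\supseteq\mcl{I}$. Because $\{a_i\}$ generates the unit ideal, for each $\mcl{M}\in V(\mcl{I})$ some $a_i\notin\mcl{M}$, hence some $e^{(i)}_{\mcl{M}}=0$; since moreover no maximal ideal outside $V(\mcl{I})$ contains any $\mcl{J}_i$, we get $\sum_i\mcl{J}_i=\R$, and Theorem~\ref{theorem:Comaximality} then gives $\sum_i(\Gs(\mcl{J}_i))=\R$.

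The heart of the argument is the construction of the units $v_i$. Since $\Gs(\mcl{J}_i)\in\mcl{S}_{\mcl{J}_i}$, it has valuation exactly $e^{(i)}_{\mcl{M}}$ at every $\mcl{M}\in\mcl{F}$ (membership in $\mcl{J}_i$ gives valuation $\geq e^{(i)}_{\mcl{M}}$, while $\Gs(\mcl{J}_i)\notin\mcl{J}_i\mcl{M}$ rules out anything larger). Fix $i$ and work modulo each $\mcl{M}^{n_{\mcl{M}}}$, $\mcl{M}\in V(\mcl{I})$: if $v_{\mcl{M}}(a_i)\geq n_{\mcl{M}}$ then $a_i$ and $\Gs(\mcl{J}_i)$ both vanish mod $\mcl{M}^{n_{\mcl{M}}}$ and we take $w_{i,\mcl{M}}=1$; if $v_{\mcl{M}}(a_i)<n_{\mcl{M}}$ then $a_i$ and $\Gs(\mcl{J}_i)$ have the same valuation $e^{(i)}_{\mcl{M}}<n_{\mcl{M}}$ in $\R_{\mcl{M}}$, so their ratio there is a unit, which we lift to a unit $w_{i,\mcl{M}}\in\R/\mcl{M}^{n_{\mcl{M}}}$ with $a_i\equiv\Gs(\mcl{J}_i)w_{i,\mcl{M}}\pmod{\mcl{M}^{n_{\mcl{M}}}}$ (compare Theorem~\ref{theorem:RepOneDimension} and Lemmas~\ref{lemma:AReplemma},~\ref{lemma:unit}). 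By the Chinese Remainder Theorem pick $v_i\in\R$ with $v_i\equiv w_{i,\mcl{M}}\pmod{\mcl{M}^{n_{\mcl{M}}}}$ for all $\mcl{M}\in V(\mcl{I})$, with $v_i\equiv 1\pmod{\mcl{M}}$ for all $\mcl{M}\in\mcl{F}\setminus V(\mcl{I})$, and with $v_i\equiv 1$ modulo each of a further finite list of maximal ideals lying outside $\mcl{F}$, chosen so that for every maximal ideal $\mcl{N}$ some $\Gs(\mcl{J}_j)v_j\notin\mcl{N}$ (only finitely many $\mcl{N}$ matter, because each fixed element $\Gs(\mcl{J}_j)$ lies in finitely many maximal ideals and $\sum_j(\Gs(\mcl{J}_j))=\R$). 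Then $v_i\in\R\setminus\bigcup_{\mcl{M}\in\mcl{F}}\mcl{M}$, we have $a_i\equiv\Gs(\mcl{J}_i)v_i\pmod{\mcl{I}}$ by the Chinese Remainder Theorem over $V(\mcl{I})$, and $\sum_i(\Gs(\mcl{J}_i)v_i)=\R$. Finally, from $a_i\equiv\Gs(\mcl{J}_i)v_i\pmod{\mcl{I}}$ for all $i$ one gets $a_i\,\Gs(\mcl{J}_j)v_j\equiv\Gs(\mcl{J}_i)v_i\,\Gs(\mcl{J}_j)v_j\equiv a_j\,\Gs(\mcl{J}_i)v_i\pmod{\mcl{I}}$ for $0\leq i<j\leq k$, i.e.\ $[a_0:\ldots:a_k]=[\Gs(\mcl{J}_0)v_0:\ldots:\Gs(\mcl{J}_k)v_k]$, finishing ``$\subseteq$''.

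The main obstacle is exactly this last bookkeeping: arranging simultaneously that each $v_i$ is a unit modulo every ideal in $\mcl{F}$, that it realizes the local ratios so that $a_i\equiv\Gs(\mcl{J}_i)v_i\pmod{\mcl{I}}$, and --- the subtle point --- that the tuple $\bigl(\Gs(\mcl{J}_i)v_i\bigr)_i$ is genuinely unital in $\R$ rather than merely unital modulo $\mcl{I}$. This forces one to control the finitely many ``stray'' maximal ideals outside $\mcl{F}$ in which the fixed elements $\Gs(\mcl{J}_i)$ happen to lie, and it is precisely here that the hypothesis that every nonzero element of $\R$ lies in only finitely many maximal ideals (Property~4 in Definition~\ref{defn:Good}) is used to make the Chinese Remainder bookkeeping terminate.
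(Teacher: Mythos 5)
Your proof is correct and follows essentially the same route as the paper: you define $\mcl{J}_i$ by truncating the $\mcl{M}$-valuations of the coordinates at the exponents of $\mcl{I}$, match $a_i$ with $\Gs(\mcl{J}_i)$ locally by a unit (the content of Lemmas~\ref{lemma:AReplemma} and~\ref{lemma:unit}), and assemble the $v_i$ by Chinese Remainder with extra congruences outside $\mcl{F}$ to keep the tuple genuinely unital, exactly as in the paper's argument. The only cosmetic overstatement is the claim that $\R_{\mcl{M}}$ is a discrete valuation ring --- Definition~\ref{defn:Good} does not obviously make the localization Noetherian --- but since you only ever use the valuation furnished by Properties (1) and (2) together with Lemma~\ref{lemma:unit}, nothing is lost.
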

\begin{obs}
In the above description of the projective space $\mbb{PF}^k_{\mcl{I}}$, 
either of the conditions $\us{i=0}{\os{k}{\sum}}\mcl{J}_i=\R,\us{i=0}{\os{k}{\sum}}(\Gs(\mcl{J}_i)v_i)=\R$ 
implies that $\us{i=0}{\os{k}{\sum}}(\Gs(\mcl{J}_i))=\R$.  
\end{obs} 


We first consider one dimensional projective spaces before we proceed to higher 
dimensions.
\subsection{Representation of elements in one dimensional projective space associated 
to ideals}
~\\
We begin with a couple of Lemmas~\ref{lemma:AReplemma},\ref{lemma:unit} before 
proving Theorem~\ref{theorem:RepOneDimension} of representing elements of one dimensional
projective spaces. For any ring $\R$ let $\mcl{U}(\R)$ denote the set of units in $\R$.
\begin{lemma}[A representation lemma]
\label{lemma:AReplemma} Let $\R$ be a commutative ring with unity. Let $\mcl{M}$
be a maximal ideal. Let $k$ be a positive integer. Suppose
$dim_{\frac{\R}{\mcl{M}}}(\frac{\mcl{M}^t}{\mcl{M}^{t+1}})=1$ for $0
\leq t \leq (k-1)$. Let $p_t \in \mcl{M}^t\bs \mcl{M}^{t+1},0\leq t \leq (k-1)$. 
Then the projective space
\equa{\mbb{PF}^1_{\mcl{M}^k}&=\bigg\{[1:p_{t}u]\mid
\bar{u}\in\mcl{U}(\frac{\R}{\mcl{M}^{k-t}}), 0\leq t \leq (k-1)\bigg\}\\
&\bigcup \bigg\{[p_tu:1]\mid\bar{u}\in\mcl{U}(\frac{\R}{\mcl{M}^{k-t}}), 0\leq t \leq (k-1)\bigg\}\\
&\bigcup \bigg\{[1:0],[0:1]\bigg\}}
\end{lemma}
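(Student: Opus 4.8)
The plan is to push everything into the finite local ring $\R/\mcl{M}^k$ and exploit that it is a chain ring. First I would note that whether $\{a,b\}$ generates the unit ideal of $\R$, whether $ad-bc\in\mcl{M}^k$, and the class $[a:b]$ itself depend only on the residues of $a,b$ modulo $\mcl{M}^k$. If $\{a,b\}$ is unital in $\R$, then $\bar a,\bar b$ generate $\R/\mcl{M}^k$, which is local with maximal ideal $\mcl{M}/\mcl{M}^k$; hence at least one of $\bar a,\bar b$ is a unit there. Moreover, by hypothesis $\R/\mcl{M}^k$ carries the filtration $\R/\mcl{M}^k\supseteq \mcl{M}/\mcl{M}^k\supseteq\cdots\supseteq\mcl{M}^k/\mcl{M}^k=(0)$ with one-dimensional $\R/\mcl{M}$-quotients, so it is a local Artinian (in particular Noetherian) ring of length $k$.

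The key structural step is to describe $\R/\mcl{M}^k$ completely. For each $0\le t\le k$, since $\bar p_t$ spans $\mcl{M}^t/\mcl{M}^{t+1}$ we have $\mcl{M}^t/\mcl{M}^k=\R\bar p_t+\mcl{M}^{t+1}/\mcl{M}^k$; as $\mcl{M}^t/\mcl{M}^k$ is finitely generated (finite length) and $\mcl{M}^{t+1}/\mcl{M}^k$ is its product with the Jacobson radical, Nakayama's lemma gives $\mcl{M}^t/\mcl{M}^k=(\bar p_t)$. Thus $\R/\mcl{M}^k$ is a chain ring whose ideals are exactly the strictly decreasing chain $(\bar p_0)\supsetneq(\bar p_1)\supsetneq\cdots\supsetneq(\bar p_k)=(0)$ (strict because each quotient is nonzero for $t\le k-1$), and every nonzero $\bar x\in\R/\mcl{M}^k$ lies in a unique $\mcl{M}^t/\mcl{M}^k\setminus\mcl{M}^{t+1}/\mcl{M}^k$ with $0\le t\le k-1$, whence $\bar x=\bar p_t\bar u$ for a unit $u\notin\mcl{M}$. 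Next I would record that $u$ is determined \emph{precisely} modulo $\mcl{M}^{k-t}$: multiplication by $p_t$ raises the $\mcl{M}$-order in $\R/\mcl{M}^k$ by exactly $t$ (because $(\bar p_t)(\bar p_s)=\mcl{M}^{t+s}/\mcl{M}^k$), so if $\bar p_t\bar u=\bar p_t\bar u'$ then $p_t(u-u')\in\mcl{M}^k$ forces $u-u'\in\mcl{M}^{k-t}$; conversely $p_t\mcl{M}^{k-t}\subseteq\mcl{M}^k$, so replacing $u$ by any representative of its class modulo $\mcl{M}^{k-t}$ leaves $[\,\cdot:p_tu\,]$ unchanged.

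With this in hand the classification is quick. Take $[a:b]\in\mbb{PF}^1_{\mcl{M}^k}$. If $\bar a$ is a unit in $\R/\mcl{M}^k$, choose $b'\in\R$ reducing to $(\bar a)^{-1}\bar b$ modulo $\mcl{M}^k$; then $\{1,b'\}$ is unital and $ab'-b\cdot 1\in\mcl{M}^k$, so $[a:b]=[1:b']$. If $b'\in\mcl{M}^k$ this is $[1:0]$; otherwise $\bar{b'}=\bar p_t\bar u$ for a unique $t\in\{0,\dots,k-1\}$ and a unit $u\notin\mcl{M}$, well-defined modulo $\mcl{M}^{k-t}$, giving $[a:b]=[1:p_tu]$ with $\bar u\in\mcl{U}(\R/\mcl{M}^{k-t})$. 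If instead $\bar a\in\mcl{M}/\mcl{M}^k$, then necessarily $\bar b$ is a unit, and swapping the two coordinates the symmetric argument puts $[a:b]$ in the form $[p_tu:1]$ with $\bar u\in\mcl{U}(\R/\mcl{M}^{k-t})$, or $[0:1]$. Conversely every class on the right-hand side is well-defined (by the last remark of the previous paragraph) and is represented by a unital pair in $\R$ (each such pair contains a $1$ or a $0$ alongside a unit), hence lies in $\mbb{PF}^1_{\mcl{M}^k}$; so the two sets coincide.

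The main obstacle is the middle paragraph: extracting the chain-ring structure of $\R/\mcl{M}^k$ and, more delicately, showing that the unit multiplier is pinned down modulo $\mcl{M}^{k-t}$ and no finer. One must be careful that the hypotheses constrain only the single maximal ideal $\mcl{M}$ and say nothing about $\R$ globally, so every ring-theoretic manipulation — Nakayama, principality of $\mcl{M}^t/\mcl{M}^k$, additivity of the $\mcl{M}$-order under multiplication — has to be carried out inside the finite-length quotient rather than in $\R$ itself. The remaining work, rescaling projective coordinates by units while keeping representatives genuinely unital over $\R$, is routine.
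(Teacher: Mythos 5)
Your proof is correct, and its overall skeleton matches the paper's: reduce to a representative with one coordinate equal to $1$, observe that the level $t$ with $b\in\mcl{M}^t\setminus\mcl{M}^{t+1}$ is an invariant of the class, show $b\equiv p_tv \bmod \mcl{M}^k$ for some $v\notin\mcl{M}$, and pin down the ambiguity in $v$ as exactly a coset of $\mcl{M}^{k-t}$. Where you genuinely diverge is in the central technical step. The paper stays in $\R$ and proves $b\equiv p_tv\bmod\mcl{M}^k$ by an explicit successive-approximation: writing $b-p_tu=\sum x_ly_l$ with $x_l\in\mcl{M}^t$, $y_l\in\mcl{M}$, re-expanding each $x_l$ against the basis $p_t$, and pushing the error into higher powers of $\mcl{M}$ until it lands in $\mcl{M}^k$. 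You instead pass to the Artinian local quotient $\R/\mcl{M}^k$ and invoke Nakayama to get $\mcl{M}^t/\mcl{M}^k=(\bar p_t)$, so that $\R/\mcl{M}^k$ is a chain ring and every nonzero element is $\bar p_t\bar u$ with $\bar u$ a unit. Your route is more structural and makes the ``determined precisely modulo $\mcl{M}^{k-t}$'' claim cleaner (via $(\bar p_t)(\bar p_s)=\mcl{M}^{t+s}/\mcl{M}^k\neq 0$ for $t+s<k$), whereas the paper's iteration is more elementary and self-contained; both hinge on the same fact, namely that $\mcl{M}^t$ becomes principal modulo $\mcl{M}^k$ under the one-dimensionality hypothesis. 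One small caveat: your opening assertion that \emph{whether} $\{a,b\}$ generates the unit ideal of $\R$ depends only on the residues modulo $\mcl{M}^k$ is false in general (e.g.\ $\R=\Z$, $\mcl{M}=(2)$, $k=1$, the pairs $(3,3)$ and $(3,1)$), but your argument never actually uses it --- every representative you produce on the right-hand side contains a literal $1$ and is therefore unital outright --- so the proof is unaffected.
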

\begin{proof}
Clearly if $[a:b] \in \mbb{PF}^1_{\mcl{M}^k}$ then either $a \nin
\mcl{M}$ or $b \nin \mcl{M}$. So without loss of generality we can
assume either $a=1$ or $b=1$. So assume $a=1$. Then
$[1:b_1]=[1:b_2]$ if and only if $b_1-b_2 \in \mcl{M}^k$. Moreover
for each $i=1,2$, either $b_i \in (\mcl{M}^t\bs\mcl{M}^{t+1})$ for some $0 \leq t<k$, 
or $b_i \in \mcl{M}^k$. Also for any $0
\leq t < k$ \equ{b_1 \in \mcl{M}^{t}\bs \mcl{M}^{t+1}
\Lra b_2 \in \mcl{M}^{t}\bs \mcl{M}^{t+1}} and for $t=k$
\equ{b_1 \in \mcl{M}^{k} \Lra b_2 \in \mcl{M}^{k}.}
Now let $b \in \mcl{M}^t\bs \mcl{M}^{t+1}$ for $0\leq t<k$ and let $b= p_tu +
\mcl{M}^{t+1}$. Here $u$ actually can be varied in a coset of $\mcl{M}$ because, if \equ{p_tu + \mcl{M}^{t+1}= p_tu^{'} +
\mcl{M}^{t+1}} then we have $u-u^{'} \in \mcl{M}$ since $\{\ol{p_t}\}$ is a basis for $\frac{\mcl{M}^t}{\mcl{M}^{t+1}}$.
Thus if $t+1=k, [1:b]=[1:p_tu]$. Now let $t+1<k$. 
Here first we observe that \equ{b- p_tu=\us{l}{\sum} x_ly_l\text{ with
}x_l\in \mcl{M}^t,y_l\in \mcl{M}.} Now again expressing each $x_l$
in terms of the basis $\{p_t\}$ modulo $\mcl{M}^{t+1}$ and repeating
this process and increasing the powers of $\mcl{M}$ for $y's$ from $x's$ and fixing the power $t$ of $\mcl{M}$ for $x's$ we can 
reach $\mcl{M}^k$. So we can actually assume that \equ{b=
p_{t}v+\mcl{M}^k} for possibly some other $v \nin \mcl{M}$.
So we have represented and described all elements of $\mbb{PF}^1_{\mcl{M}^k}$. 
For $[1:b]=[1:p_tv]$, if $k\geq t+1$ then we can actually vary $v$ in its coset of $\mcl{M}^{k-t}$
without changing the projective element $[1:b]$.

This proves Lemma~\ref{lemma:AReplemma}.
\end{proof}

\begin{lemma}[An observation between the addition and multiplication in the ring]
\label{lemma:unit} Let $\R$ be a commutative ring with unity. Let
$\mcl{I}=\mcl{M}^k,$ where $\mcl{M}\subs \R$ be a maximal ideal.
Suppose for $0\leq t \leq k-1, dim_{\frac{\R}{\mcl{M}}}(\frac{\mcl{M}^t}{\mcl{M}^{t+1}})=1$.
Let $p_i,\ti{p}_i \in \mcl{M}^i\bs \mcl{M}^{i+1}$ for $0 \leq i
\leq k$. Then for any $u \in \R\bs \mcl{M}$ there exists $v \in \R\bs
\mcl{M}$ such that $[1:p_iu]=[1:\ti{p}_iv] \in
\mbb{PF}^1_{\mcl{I}}$. i.e. $p_iu-\ti{p}_iv \in \mcl{M}^{k}$.
Moreover $u$ and $v$ can be varied in their respective cosets $\mod
\mcl{M}^{k-i}$ without changing the element in the projective space
$\mbb{PF}^1_{\mcl{M}^k}$.
\end{lemma}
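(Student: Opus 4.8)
The plan is to pass to the local Artinian quotient $\R/\mcl{M}^k$ and show, via Nakayama's lemma, that the ideal $\mcl{M}^i/\mcl{M}^k$ of this ring is principal and is generated by the class of $p_i$ as well as by the class of $\ti{p}_i$; the scalar relating the two generators then produces the required $v$. Throughout, the only nondegenerate case is $0\le i\le k-1$; when $i=k$ we have $p_iu,\ti{p}_iv\in\mcl{M}^k$, so both sides are $[1:0]$ for any $v\notin\mcl{M}$ and there is nothing to prove.

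First I would note that $\R/\mcl{M}^k$ is local with maximal ideal $\bar{\mcl{M}}:=\mcl{M}/\mcl{M}^k$, which is nilpotent ($\bar{\mcl{M}}^k=0$); this uses only that $\R/\mcl{M}$ is a field. The hypothesis $\dim_{\R/\mcl{M}}(\mcl{M}^t/\mcl{M}^{t+1})=1$ for the relevant $t$ gives that $\mcl{M}^i/\mcl{M}^k$ carries the finite filtration $\mcl{M}^i\spq\mcl{M}^{i+1}\spq\cdots\spq\mcl{M}^k$ with simple successive quotients, so it is a finite-length, in particular finitely generated, $\R/\mcl{M}^k$-module. Since $\bar{\mcl{M}}^i/\bar{\mcl{M}}^{i+1}=\mcl{M}^i/\mcl{M}^{i+1}$ is one-dimensional and $p_i\notin\mcl{M}^{i+1}$, Nakayama's lemma shows $\bar{\mcl{M}}^i$ is generated by the class $\bar p_i$; the same argument with $\ti{p}_i$ shows it is also generated by $\bar{\ti{p}_i}$. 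Hence $\bar{\ti{p}_i}=\bar w\,\bar p_i$ for some $\bar w\in\R/\mcl{M}^k$, and $\bar w$ must be a unit: if $\bar w\in\bar{\mcl{M}}$ then $\bar{\ti{p}_i}\in\bar{\mcl{M}}^{i+1}$, contradicting $\ti{p}_i\notin\mcl{M}^{i+1}$.

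Given $u\in\R\bs\mcl{M}$, I would then take $v\in\R$ reducing modulo $\mcl{M}^k$ to $\bar w^{-1}\bar u$. Then $\ti{p}_iv\equiv \bar w\,\bar p_i\,\bar w^{-1}\bar u=p_iu\pmod{\mcl{M}^k}$, i.e. $p_iu-\ti{p}_iv\in\mcl{M}^k$, which is precisely the assertion $[1:p_iu]=[1:\ti{p}_iv]$ in $\mbb{PF}^1_{\mcl{M}^k}$; and $v\notin\mcl{M}$ because its image in the field $\R/\mcl{M}$ is the product of the units $\bar w^{-1}$ and $\bar u$. The coset clause is then immediate: if $u'\equiv u\pmod{\mcl{M}^{k-i}}$ then $p_i(u'-u)\in\mcl{M}^i\mcl{M}^{k-i}\sbq\mcl{M}^k$, so $[1:p_iu']=[1:p_iu]$, and symmetrically for varying $v$ modulo $\mcl{M}^{k-i}$.

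The only real content is the principality of $\mcl{M}^i/\mcl{M}^k$, i.e. the correct application of Nakayama (for which the finite-length observation coming from one-dimensionality of the graded pieces is exactly what is needed); everything after that is bookkeeping. If one prefers to avoid Nakayama, the same conclusion follows by hand as in the proof of Lemma~\ref{lemma:AReplemma}: write $\ti{p}_i\equiv \bar c\,p_i\pmod{\mcl{M}^{i+1}}$ with $c\notin\mcl{M}$, take $v_1$ a lift of $\bar c^{-1}\bar u$, and successively correct $v$ on the pieces $\mcl{M}^{i+1}/\mcl{M}^{i+2},\ldots,\mcl{M}^{k-1}/\mcl{M}^k$; there the one point to check is that multiplication by $\ti{p}_i$ sends $\mcl{M}^j/\mcl{M}^{j+1}$ onto $\mcl{M}^{i+j}/\mcl{M}^{i+j+1}$, which reduces, via a uniformizer $\pi\in\mcl{M}\bs\mcl{M}^2$, to the statement $\pi^n\notin\mcl{M}^{n+1}$ for all $n$, proved by induction on $n$ using $\mcl{M}^n\neq\mcl{M}^{n+1}$ together with one-dimensionality of the graded pieces.
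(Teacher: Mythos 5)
Your argument is correct, and it is a genuinely different (and more structural) route than the paper's. The paper disposes of this lemma in one line by citing Lemma~\ref{lemma:AReplemma}: there, every class $[1:b]$ with $b\in\mcl{M}^i\bs\mcl{M}^{i+1}$ is shown by a hands-on successive-approximation argument (repeatedly rewriting $b-p_iu$ in terms of the basis of each graded piece and pushing the excess into higher powers of $\mcl{M}$) to have the form $[1:p_iv]$ for the \emph{fixed} representatives $p_i$; applying that description to both choices $p_i$ and $\ti{p}_i$ gives the lemma. You instead work in the Artinian local ring $\R/\mcl{M}^k$ and observe that the one-dimensionality of $\mcl{M}^i/\mcl{M}^{i+1}$ together with Nakayama (or just nilpotence of $\mcl{M}/\mcl{M}^k$, which makes finite generation unnecessary) forces $\mcl{M}^i/\mcl{M}^k$ to be principal, generated by either $\bar{p}_i$ or $\bar{\ti{p}}_i$; the comparison unit $\bar{w}$ then hands you $v=\bar{w}^{-1}\bar{u}$ directly, and the coset clause and the degenerate case $i=k$ are immediate. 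What your approach buys is a clean conceptual reason for the statement (principality of the image ideal in the quotient) and a proof independent of the iterative bookkeeping in Lemma~\ref{lemma:AReplemma}; what the paper's approach buys is that the same computation simultaneously yields the full enumeration of $\mbb{PF}^1_{\mcl{M}^k}$, which it needs elsewhere. Your closing remark about the elementary variant, including the verification via a uniformizer $\pi\in\mcl{M}\bs\mcl{M}^2$ that $\pi^n\nin\mcl{M}^{n+1}$, is essentially a cleaner rendering of the paper's own argument and is also correct.
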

\begin{proof}
This lemma follows from Lemma~\ref{lemma:AReplemma}.
\end{proof}
Now we state the following theorem of representing elements for one dimensional 
projective spaces.
\begin{theorem}
\label{theorem:RepOneDimension} Let $\R$ be a commutative ring with
unity. Suppose $R$ is a Dedekind type domain (refer to
Definition~\ref{defn:Good}). Let $\mathit{M}(\R)$ be the monoid
generated by maximal ideals in $\R$. Let $r\in \mbb{N}$ and 
\equ{\mcl{I}=\mcl{M}_1^{k_1}\mcl{M}_2^{k_2}\ldots\mcl{M}^{k_r}_r\in\mathit{M}(\R)}
be an ideal. Let $\mcl{F}$ be any finite set of maximal ideals
containing $V(\mcl{I})$. Let $\Gs=\Gs_{\mcl{F}}$ be the nowhere zero choice monoid
multiplicative map for the monoid $\mathit{M}(\mcl{F})$ from
Theorem~\ref{theorem:Comaximality}.
Then the projective space
\equa{\mbb{PF}^1_{\mcl{I}}&=\{[\Gs(\mcl{I}_1):\Gs(\mcl{I}_2)u]\bigg\vert
u \in \R\bs\bigg( \us{\mcl{M} \in \mcl{F}}\bigcup \mcl{M}\bigg), \text{ where for }i=1,2 \ \ \mcl{I}\subseteq \mcl{I}_i \in
\mathit{M}(\mcl{F})\\ &\mcl{I}_1+\mcl{I}_2=\R,(\Gs(\mcl{I}_1))+(\Gs(\mcl{I}_2)u)=\R\}.} 
\end{theorem}
\begin{proof}
Consider an element $e=(e_1,e_2,\ldots,e_r) \in
\us{i=1}{\os{r}{\prod}}\mbb{PF}^1_{\mcl{M}_i^{k_i}}$. Let $A \sqcup
B$ be a partition of the set $\{1,2,\ldots,r\}$ such that if $i \in
A$ then $e_i=[1:\Gs(\mcl{M}_i^{j_i})u_i]$ for some $u_i\nin
\mcl{M}_i$ and if $i \in B$ then $e_i=[\Gs(\mcl{M}_i^{j_i})v_i:1]$
for some $v_i\nin \mcl{M}_i$. Here $0 \leq j_i \leq k_i$. This
representation holds for $e$ using Lemma~\ref{lemma:AReplemma}. Using the Chinese remainder reduction
isomorphism in Theorem~\ref{theorem:CRTSURJMOSTGENCASE} there exists
an element $[a:b]\in \mbb{PF}^1_{\mcl{I}}$ such that $[a:b] = e_i \in \mbb{PF}^1_{\mcl{M}_i^{k_i}}$. 
Actually by usual Chinese Remainder Theorem and by an application of Proposition~\ref{prop:UnitalDD}, 
we can actually find $[a:b]\in \mbb{PF}^1_{\mcl{I}}$ such that \equa{&a\equiv 1 \mod \mcl{M}_i^{k_i}, b \equiv \Gs(\mcl{M}_i^{j_i})u_i\mod \mcl{M}_i^{k_i} \text{ for }i\in A \text{ and }\\
&b \equiv 1 \mod \mcl{M}_i^{k_i},a \equiv \Gs(\mcl{M}_i^{j_i})v_i\mod \mcl{M}_i^{k_i} \text{ for }i\in B.}
Let $\mcl{I}_1 = \us{i\in
B}{\prod}\mcl{M}_i^{j_i},\mcl{I}_2 = \us{i\in
A}{\prod}\mcl{M}_i^{j_i}$. We observe that
$\mcl{I}_1,\mcl{I}_2$ are co-maximal as $A,B$ are disjoint. Now we
factor $\Gs(\mcl{I}_1),\Gs(\mcl{I}_2)$ from $a,b$ respectively using
congruences especially using Lemma~\ref{lemma:unit}. Let $i \in A$.
Now $a \equiv 1\mod \mcl{M}_i^{k_i},b \equiv
\Gs(\mcl{M}_i^{j_i})u_i\mod \mcl{M}_i^{k_i}$. Let $t
\Gs(\mcl{I}_1)\equiv 1 \mod \mcl{M}_i^{k_i}$. We observe both
$b,\Gs(\mcl{I}_2)t \in \mcl{M}_i^{j_i}\bs \mcl{M}_i^{j_i+1}$ unless
$j_i=k_i$ in which case both $b,\Gs(\mcl{I}_2)t \in
\mcl{M}_i^{k_i}$. Now we use Lemma~\ref{lemma:unit} to conclude that
that there exists $x_i \in \R\bs \mcl{M}_i$ such that
$b-\Gs(\mcl{I}_2)tx_i\in \mcl{M}_i^{k_i}$. This proves that
\equ{[a:b]=[1:\Gs(\mcl{I}_2)tx_i]=[\Gs(\mcl{I}_1):\Gs(\mcl{I}_2)x_i]
\in \mbb{PF}^1_{\mcl{M}_i^{k_i}}.} We can do similarly if $i \in B$.
So we have factored $\Gs(\mcl{I}_1),\Gs(\mcl{I}_2)$ from $a,b$ for
all $1 \leq i \leq r$ respectively obtaining suitable elements $x_i
\in \R\bs \mcl{M}_i$ for $1 \leq i \leq r$. So we get that
\equ{[a:b]=(e_1,e_2,\ldots,e_r)=([\Gs(\mcl{I}_1):\Gs(\mcl{I}_2)x_1],
[\Gs(\mcl{I}_1):\Gs(\mcl{I}_2)x_2],\ldots,[\Gs(\mcl{I}_1):\Gs(\mcl{I}_2)x_r])}
Now we obtain the element $u \in \R\bs\bigg( \us{\mcl{M} \in
\mcl{F}}\bigcup \mcl{M}\bigg)$ as follows. We solve three sets of
congruences simultaneously.
\begin{itemize}
\item The first set of congruences is \equ{u \equiv x_i\mod
\mcl{M}^k_i}
\item The second set of congruences is as follows.  For $\mcl{M} \in
\mcl{F}\bs \{\mcl{M}_1,\mcl{M}_2,\ldots,\mcl{M}_r\}$, \equ{u \equiv
1\mod \mcl{M}}
\item The third set of congruences is as follows. Since any element $r\in
R$ is in finitely many maximal ideals, let $\mcl{G}$ be the finite
set of maximal ideals which contain $\Gs(\mcl{I}_1),\Gs(\mcl{I}_2)$.
Then we solve for $\mcl{M}\in \mcl{G}\bs \mcl{F}$ \equ{u \equiv
1\mod \mcl{M}}
\end{itemize}
So by solving these congruences for $u$ we have the following.
\begin{enumerate}[label=(\alph*)]
\item $u \in \R\bs\bigg(\us{\mcl{M} \in \mcl{F}}\bigcup \mcl{M}\bigg)$.
\item There is no common maximal ideal containing any two of the elements $u,\Gs(\mcl{I}_1),\Gs(\mcl{I}_2)$.
\end{enumerate} 
So $[\Gs(\mcl{I}_1):\Gs(\mcl{I}_2)u] \in \mbb{PF}^1_{\mcl{I}}$ is not only a well defined element but also a required element. 
This proves Theorem~\ref{theorem:RepOneDimension}.
\end{proof}
\begin{obs}
\label{obs:coset}
In the above description of $\mbb{PF}^1_{\mcl{I}}$ we have 
\equ{[\Gs(\mcl{I}_1):\Gs(\mcl{I}_2)u]=[\Gs(\mcl{I}_1):\Gs(\mcl{I}_2)\ti{u}]}
where $\ti{u}\in u+\mcl{I}_3$ with $\mcl{I}_1\mcl{I}_2\mcl{I}_3=\mcl{I}$ provided 
$(\Gs(\mcl{I}_1))+(\Gs(\mcl{I}_2)\ti{u})=\R$. 
\end{obs}


\subsection{Representation of elements in higher dimensional projective space associated to 
ideals}
~\\
Now we prove the main Theorem~\ref{theorem:RepHigherDimElement} of 
this section about representing elements of projective spaces associated to ideals of
any dimension.
\begin{proof}
Let $\mcl{I}=\mcl{M}_1^{t_1}\mcl{M}_2^{t_2}\ldots\mcl{M}_l^{t_l}\in
\mathit{M}(\R)$. Let $[x_0:x_1:\ldots:x_k]\in \mbb{PF}^k_{\mcl{I}}$.
Assume each $x_i$ is non-zero by replacing the element by a non-zero
element of $\mcl{I}$. This also does not alter the condition
$\us{i=0}{\os{k}{\sum}}(x_i)=\R$. We define the ideal $\mcl{J}_i$ as
follows. Let $\mcl{G}=\{\mcl{M}_1,\ldots,\mcl{M}_l\}=V(\mcl{I})$.
Consider the unique factorizations associated to $x_i$ with respect to the
monoid $\mathit{M}(\mcl{G})$ (refer to Definition~\ref{defn:Uniquefact}). Define the ideal \equ{\text{for }0
\leq i\leq k,\mcl{J}_i=\us{j=1}{\os{l}{\prod}}\mcl{M}_j^{min(t_j,V_{\mcl{M}_j}(x_i))}\Ra
\mcl{J}_i \sups V_{\mcl{G}}(x_i) \sups \{x_i\},\mcl{J}_i \sups
\mcl{I}.} So $\us{i=0}{\os{k}{\sum}}(\mcl{J}_i)=\R$. Hence we also
have $\us{i=0}{\os{k}{\sum}}(\Gs(\mcl{J}_i))=\R$ for
$\Gs:\mathit{M}(\mcl{F})\lra \R,$ where $\mcl{F} \sups \mcl{G}$. Now
we factor $\Gs(\mcl{J}_i)$ from $x_i$ for $0 \leq i \leq k$ using
congruences. First for a fixed $1 \leq j\leq l$, using 
Lemma~\ref{lemma:unit} we conclude that there exist $v_{ij}\in
\R\bs\mcl{M}_j$ such that $x_i-\Gs(\mcl{J}_i)v_{ij}\in
\mcl{M}_j^{t_j}$ if $t_j>V_{\mcl{M}_j}(x_i)$. Note if $V_{\mcl{M}_j}(x_i) \geq t_j$ then we
could choose $v_{ij}=1$. By Chinese Remainder Theorem for a fixed
$i,0\leq i\leq k$ we lift $v_{ij}$ to an element $v_i\in \R\bs \us{\mcl{M}\in
\mcl{F}}{\bigcup}\mcl{M}$ by solving congruences. \equ{v_i\equiv
v_{ij}\mod \mcl{M}_j^{t_j}, 1\leq j\leq l} We may need to solve some additional
finitely many congruences for $0\leq i\leq k$ successively of the type \equ{v_i \equiv 1\mod \mcl{N}}
to avoid a maximal ideal $\mcl{N}$ and also to ensure the condition
that \equ{\us{i=0}{\os{k}{\sum}}(\Gs(\mcl{J}_i)v_i)=\R} which can be
done as every non-zero element is contained in finitely many maximal
ideals. We note that we can choose $v_i, 0\leq i\leq k$ such that the set of maximal ideals that contain $v_i$
is disjoint from that of $v_j$ for $0\leq i\neq j \leq k$.
Hence Theorem~\ref{theorem:RepHigherDimElement} follows.
\end{proof}


\section{\bf{Surjectivity of the map $SL_k(\R) \lra \us{i=1}{\os{k}{\prod}}\mbb{PF}^{k-1}_{\mcl{I}_i}$}}
Here in this section we prove the third main result, surjectivity Theorem~\ref{theorem:SurjSLkHighDimension} for all $k\geq 2$.
First we consider an explicit computation in a slightly general scenario over an arbitrary commutative ring with unity.


\subsection{\bf{Surjectivity for a pair of maximal ideals in arbitrary commutative 
ring with unity}} 
\label{sec:SEMIACRU}
~\\
Here we describe explicitly a collection of $2 \times 2$
determinant one matrices which map onto the product of spaces
$\mbb{PF}^1_{\mcl{N}} \times \mbb{PF}^1_{\mcl{M}}$ for two maximal
ideals $\mcl{N},\mcl{M}$ in the ring $\R$ and thereby proving the following proposition.
\begin{prop}
\label{prop:SEMIACRU}
Let $\R$ be a commutative ring with unity. Let $\mcl{M},\mcl{N}$ be two distinct maximal ideals in $\R$. Then the maps 
$\gs_i:SL_2(\R) \lra \mbb{PF}^1_{\mcl{N}} \times \mbb{PF}^1_{\mcl{M}},i=1,2$ (refer to Theorem~\ref{theorem:SurjSLkHighDimension}) 
are surjective.
\end{prop}
\begin{proof}
Fix any two sections $s_{\mcl{N}}:\frac{\R}{\mcl{N}} \lra \R$ and
$s_{\mcl{M}}:\frac{\R}{\mcl{M}} \lra \R$ of the quotient maps
$\gt_{\mcl{M}}:\R \lra \frac{\R}{\mcl{M}},\gt_{\mcl{N}}:\R \lra
\frac{\R}{\mcl{N}}$.
Consider the following set of matrices
\equ{\mcl{C}_1=\bigg\{\mattwo {s}{(st-1)}{1}{t},s \in
image(s_{\mcl{N}}),t \in image(s_{\mcl{M}})\bigg\}}

This set of matrices maps into the subset \equ{\mbb{PF}^1_{\mcl{N}}
\times \bigg( \{[1:t] \in \mbb{PF}^1_{\mcl{M}}\mid t \in
image(s_{\mcl{M}})\}\bigg) \subs \mbb{PF}^1_{\mcl{N}} \times
\mbb{PF}^1_{\mcl{M}}} injectively giving rise to distinct elements.

\equ{\mcl{C}_1 \hookrightarrow \mbb{PF}^1_{\mcl{N}} \times \bigg(
\{[1:t] \in \mbb{PF}^1_{\mcl{M}}\mid t \in
image(s_{\mcl{M}})\}\bigg)}

There is one more element for each $t \in image(s_{\mcl{M}})$ with
$[1:t]$ as the image corresponding to the second row. It is given as
follows. Since $\mcl{M},\mcl{N}$ are co-maximal there exist elements
$p \in \mcl{M}$, $q \in \mcl{N}$ such that the ideals $(p),(q)$ are
co-maximal i.e. $(p)+(q)=1$. Consider elements $r,q \in \R$ such that
$rq-kp=1$ as $(p)+(q)=1$ and for such $p,q,r,k$, we have that the
ideals $(p(1+qr)),(q(1+pk))$ are co-maximal. So consider elements
$l,m$ such that $lp(1+qr)-mq(1+pk)=1-t$ for any given $t \in \R$. Now
consider $2 \times 2$ matrices of determinant $1$.
\equ{\mcl{C}_2=\bigg\{\mattwo {(1+rq)}{(t+mq)}{(1+kp)}{(t+lp)}, t
\in image(s_{\mcl{M}})\bigg\}}

Now the collection $\mcl{C}_1\cup \mcl{C}_2$ maps injectively into
the set $\mbb{PF}^1_{\mcl{N}} \times \{[1:t] \in
\mbb{PF}^1_{\mcl{M}}\mid t \in image(s_{\mcl{M}})\}$. We shall soon
observe that this collection actually maps onto this set
bijectively. i.e

\equ{\big(\mcl{C}_1 \cup \mcl{C}_2\big) \cong  \mbb{PF}^1_{\mcl{N}}
\times \bigg( \{[1:t] \in \mbb{PF}^1_{\mcl{M}}\mid t \in
image(s_{\mcl{M}})\}\bigg)}

Now consider the set \equ{\mcl{C}_3=\bigg\{\mattwo
{(1+sp)}{s}{p}{1},s \in image(s_{\mcl{N}})\bigg\}}

This set maps injectively into the set $\mbb{PF}^1_{\mcl{N}} \times
\{[0:1]\}$

\equ{\mcl{C}_3 \hookrightarrow \mbb{PF}^1_{\mcl{N}} \times
\{[0:1]\}}

We will soon see that the set $\mcl{C}_3$ misses just one element in
the set $\mbb{PF}^1_{\mcl{N}} \times \{[0:1]\}$.

Now we describe that one more matrix of determinant one which maps
onto the missing element $([p:1],[0:1]) \in \mbb{PF}^1_{\mcl{N}}
\times \mbb{PF}^1_{\mcl{M}}$. Consider elements $k,r \in \R$ such
that $rq-kp=1$ as $(p)+(q)=1$. For such integers $k,p,r,q$ we have
that the ideals $(p(1+rq)),(q(1+kp))$ are co-maximal. So consider
elements $m,l \in \R$ such that $lq(1+kp)-mp(1+rq)=1-p-kp^2$. Then
consider $2 \times 2$ matrix of determinant $1$ given by
\equ{\mattwo {(lq+p)}{(1+rq)}{mp}{(1+kp)}}

Now we observe that we have a total collection of two by two
matrices of determinant one mapping injectively into
$\mbb{PF}^1_{\mcl{N}} \times \mbb{PF}^1_{\mcl{M}}$.

We immediately see that for a fixed $t \in image(s_{\mcl{M}})$
\equ{\{[s:st-1]\mid s \in image(s_{\mcl{N}})\}= \{[1:w]\mid w \in
image(s_{\mcl{N}}),[1:w] \neq [1:t]\}\cup \{[0:1]\}.}

We also observe that \equ{\{[1+sp:s]\mid s \in image(s_{\mcl{N}})\}=
\{[1:w]\mid w \in image(s_{\mcl{N}}),[1:w] \neq [p:1]\}\cup
\{[0:1]\}.}

Hence the mapping $\gs_1$ is onto and similarly the map $\gs_2$ is
also onto. So the intermediate claims of surjectivity of $\mcl{C}_1
\cup \mcl{C}_2$ and the set $\mcl{C}_3$ just missing one element are
justified.
\end{proof}


\subsection{\bf{Surjectivity of the map $SL_2(\R) \lra \mbb{PF}^1_{\mcl{I}} \times 
\mbb{PF}^1_{\mcl{J}}$}}
\label{sec:SurjSL2Case}
In this section we prove the surjectivity Theorem~\ref{theorem:SurjSLkHighDimension}
for $k=2$ which is stated below. It is enough to prove that $\gs_1$ is surjective.
\begin{theorem}
\label{theorem:SurjSL2Case} Let $\R$ be a commutative ring with
unity. Suppose $\R$ is a Dedekind type domain (refer to
Definition~\ref{defn:Good}). Let $\mathit{M}(\R)$ be the monoid
generated by maximal ideals in $\R$. Let $\mcl{I},\mcl{J}\in
\mathit{M}(\R)$ be two co-maximal proper ideals. Then the map \equ{\gs_1:SL_2(\R)
\lra \mbb{PF}^1_{\mcl{I}} \times \mbb{PF}^1_{\mcl{J}}} given by
\equ{\mattwo abcd \lra ([a:b],[c:d])} is surjective.
\end{theorem}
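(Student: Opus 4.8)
The plan is to reduce the statement to a congruence problem modulo $\mcl{I}\mcl{J}$, where it becomes elementary, and then lift to $SL_2(\R)$. Let $(P,Q)\in\mbb{PF}^1_{\mcl{I}}\times\mbb{PF}^1_{\mcl{J}}$ and choose representatives $(a,b)$ of $P$ and $(c,d)$ of $Q$; by the definition of $\mbb{PF}^1$ the pairs $(a,b)$ and $(c,d)$ are unimodular in $\R$. Conversely, for any $A=\mattwo{a}{b}{c}{d}\in SL_2(\R)$ the relation $ad-bc=1$ forces both $(a,b)$ and $(c,d)$ to be unimodular, so each component of the map in the statement is indeed a well-defined point, and the map is exactly $\gs_1$ for $k=2$.

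Since $\mcl{I}$ and $\mcl{J}$ are co-maximal we have $\mcl{I}\mcl{J}=\mcl{I}\cap\mcl{J}$, and the Chinese remainder theorem gives $\R/\mcl{I}\mcl{J}\cong\R/\mcl{I}\times\R/\mcl{J}$, hence $SL_2(\R/\mcl{I}\mcl{J})\cong SL_2(\R/\mcl{I})\times SL_2(\R/\mcl{J})$. Over $\R/\mcl{I}$ the row $(\overline{a},\overline{b})$ is unimodular, so writing $\overline{s}\,\overline{a}+\overline{t}\,\overline{b}=1$ it completes to $\mattwo{\overline{a}}{\overline{b}}{-\overline{t}}{\overline{s}}\in SL_2(\R/\mcl{I})$; similarly $(\overline{c},\overline{d})$ completes to a matrix in $SL_2(\R/\mcl{J})$ having $(\overline{c},\overline{d})$ as its \emph{second} row. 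Pairing these through the isomorphism above produces $\overline{M}\in SL_2(\R/\mcl{I}\mcl{J})$ whose reduction modulo $\mcl{I}$ has first row $(\overline{a},\overline{b})$ and whose reduction modulo $\mcl{J}$ has second row $(\overline{c},\overline{d})$.

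Now I invoke Corollary~\ref{cor:SurjModIdeal}: as $\mcl{I}\mcl{J}\in\mathit{M}(\R)$ is a finite product of (powers of) maximal ideals, it is contained in only finitely many maximal ideals, so the reduction $SL_2(\R)\to SL_2(\R/\mcl{I}\mcl{J})$ is surjective. Lift $\overline{M}$ to $M=\mattwo{m_{11}}{m_{12}}{m_{21}}{m_{22}}\in SL_2(\R)$. Then $(m_{11},m_{12})$ is unimodular and $m_{11}\equiv a,\ m_{12}\equiv b\pmod{\mcl{I}}$, so $m_{11}b-m_{12}a\in\mcl{I}$, i.e.\ $[m_{11}:m_{12}]_{\mcl{I}}=[a:b]_{\mcl{I}}=P$; likewise $[m_{21}:m_{22}]_{\mcl{J}}=Q$. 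Hence $M$ maps to $(P,Q)$, and the map is surjective.

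Under this route the only substantive input is Corollary~\ref{cor:SurjModIdeal}; the remaining steps (reduction preserves unimodularity, length-$2$ unimodular rows complete to $SL_2$, and the $\sim_{GR}$ bookkeeping for $k=1$) are immediate, so there is no real obstacle. If instead one insists on a proof internal to this section, using the representation Theorem~\ref{theorem:RepOneDimension} rather than the $SL_2$-reduction (as the section preamble suggests), the delicate point is to make the determinant \emph{exactly} $1$ rather than merely $\equiv 1$ modulo $\mcl{I}\mcl{J}$: one writes $P=[\Gs(\mcl{A}_1):\Gs(\mcl{A}_2)u]$ and $Q=[\Gs(\mcl{B}_1):\Gs(\mcl{B}_2)w]$, uses that each $\Gs(\mcl{A}_i)$ is a unit modulo $\mcl{J}$ and each $\Gs(\mcl{B}_j)$ a unit modulo $\mcl{I}$ together with the fact that the representing pairs are unimodular in $\R$, and then pins down the off-diagonal entries using the coset freedom in $u,w$ allowed by Theorem~\ref{theorem:RepOneDimension}. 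That determinant bookkeeping would be the main obstacle in the alternative route, which is exactly what the quotient-lifting argument sidesteps.
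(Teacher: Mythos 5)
Your proposal is correct, and it takes a genuinely different route from the paper. The paper proves this theorem by direct construction: it writes the two target points in the normal form $[\Gs(\mcl{I}_1):\Gs(\mcl{I}_2)u]$, $[\Gs(\mcl{J}_1):\Gs(\mcl{J}_2)v]$ supplied by Theorem~\ref{theorem:RepOneDimension} and the choice monoid map $\Gs$ of Theorem~\ref{theorem:Comaximality}, then posits a candidate matrix with free parameters $x,y,i_3,j_3$ and solves congruences (using the fundamental lemma on arithmetic progressions, Theorem~\ref{theorem:FundLemmaSchemes}, and the comaximality properties of $\Gs$) to make the determinant exactly $1$ --- precisely the ``determinant bookkeeping'' you identify as the delicate point of that route. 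You instead sidestep all of this: complete the unimodular row $(\overline{a},\overline{b})$ to $SL_2(\R/\mcl{I})$ (trivial for $2\times 2$, since $sa+tb\equiv 1$ gives $\mattwo{\overline{a}}{\overline{b}}{-\overline{t}}{\overline{s}}$), do the same for $(\overline{c},\overline{d})$ in the second row over $\R/\mcl{J}$, glue by CRT, and lift by Corollary~\ref{cor:SurjModIdeal}, which applies because a product of maximal ideals is contained in only finitely many maximal ideals; there is no circularity, as that corollary is established in Sections~\ref{sec:UnitalLemma} and~\ref{sec:SurjModIdeal}, well before this theorem. Your argument is shorter, needs none of the representation machinery of Section~\ref{sec:RepHigherDimElement}, and in fact proves a stronger statement --- it works over any commutative ring with unity once $\mcl{I}\mcl{J}$ satisfies the unital set condition, whereas the paper's construction leans on the dedekind type domain hypothesis throughout. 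What the paper's longer proof buys is an explicit matrix realizing the preimage and a demonstration of the $\Gs$-representation technique in the simplest case; note, though, that the paper deliberately does \emph{not} use your quotient-lifting scheme for general $k$ either, reducing instead by column operations, so your approach is a genuine alternative rather than a specialization of the higher-dimensional argument.
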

\begin{proof}
Consider the two co-maximal ideals
\equ{\mcl{I}=\us{i=1}{\os{r}{\prod}}\mcl{M}_i^{k_i},
\mcl{J}=\us{i=1}{\os{s}{\prod}}\mcl{N}_i^{l_i}\in
\mathit{M}(\R).} Let
\equ{\mcl{F}=\{\mcl{M}_1,\mcl{M}_2,\ldots,\mcl{M}_r,\mcl{N}_1,\mcl{N}_2,\ldots,\mcl{N}_s\}.}
Let $\Gs=\Gs_{\mcl{F}}$ be the choice monoid multiplicative map for the monoid
$\mathit{M}(\mcl{F})$ from Theorem~\ref{theorem:Comaximality}. Using
Theorem~\ref{theorem:RepOneDimension} consider an
element
\equ{([\Gs(\mcl{I}_1):\Gs(\mcl{I}_2)u],[\Gs(\mcl{J}_1):\Gs(\mcl{J}_2)v])
\in \mbb{PF}^1_{\mcl{I}_1} \times \mbb{PF}^1_{\mcl{I}_2},} where
$\mcl{I}_1,\mcl{I}_2,\mcl{J}_1,\mcl{J}_2\in \mathit{M}(\R)$ with
$\mcl{I} \subs \mcl{I}_1,\mcl{I}_2$ which are co-maximal and
$\mcl{J} \subs \mcl{J}_1,\mcl{J}_2$ which are co-maximal, where $u,v
\in \R\bs\bigg(\us{\mcl{M} \in \mcl{F}}{\bigcup} \mcl{M}\bigg)$. Let
$\mcl{I}_3,\mcl{J}_3 \in \mathit{M}(\R)$ be the unique ideals such
that $\mcl{I}_1\mcl{I}_2\mcl{I}_3=\mcl{I},\mcl{J}_1\mcl{J}_2\mcl{J}_3=\mcl{J}$.
Let \equ{x \in \R\bs \bigg(\us{i=1}{\os{r}{\bigcup}}\mcl{M}_i\bigg),y
\in \R\bs \bigg(\us{i=1}{\os{s}{\bigcup}}\mcl{N}_i\bigg),i_3 \in
\mcl{I}_3,j_3 \in \mcl{J}_3} and consider the following matrix
\equ{\mattwo
{\Gs(\mcl{I}_1)x}{\Gs(\mcl{I}_2)(xu+i_3)}{\Gs(\mcl{J}_1)y}{\Gs(\mcl{J}_2)(yv+j_3)}.}
Now we solve for $x,y,i_3,j_3$ such that the above matrix has
determinant one (also refer to Observation~\ref{obs:coset}). 
For this purpose let $\ga,\gb \in \R,I_3 \in
\mcl{I}_3,J_3 \in \mcl{J}_3,i_3=I_3\gb \Gs(\mcl{I}_1),j_3=J_3\ga
\Gs(\mcl{J}_1)$ and consider the equation
\begin{equation}
\label{eq:DetEquation}
\begin{aligned}
\Gs(\mcl{I}_1)\Gs(\mcl{J}_2)x(J_3\ga
\Gs(\mcl{J}_1))-\Gs(\mcl{I}_2)\Gs(\mcl{J}_1)y(I_3\gb
\Gs(\mcl{I}_1))=1+(\Gs(\mcl{I}_2)\Gs(\mcl{J}_1)u-\Gs(\mcl{I}_1)\Gs(\mcl{J}_2)v)xy
\end{aligned}
\end{equation}
Consider the co-maximal ideals
\equ{\mcl{K}_1=(\Gs(\mcl{I}_1)),\mcl{K}_2=(\Gs(\mcl{J}_1))} Now we
solve the following congruences for $A \in \R$ given by
\equ{1+\Gs(\mcl{I}_2)\Gs(\mcl{J}_1)uA \in
\mcl{K}_1,1-\Gs(\mcl{I}_1)\Gs(\mcl{J}_2)vA \in \mcl{K}_2} Such
solutions exist because the two pairs of ideals
\begin{itemize}
\item $(\Gs(\mcl{I}_2)\Gs(\mcl{J}_1)u)$ and $\mcl{K}_1$,
\item $(\Gs(\mcl{I}_1)\Gs(\mcl{J}_2)v)$ and $\mcl{K}_2$
\end{itemize}
are also co-maximal. If $A_0$ is
one common solution then the set of common solutions is given by
\equ{A_0+\mcl{K}_1\mcl{K}_2=\{A_0+a \mid a \in \mcl{K}_1\mcl{K}_2\}}
because $\frac{\R}{\mcl{K}_1\mcl{K}_2} \cong \frac{\R}{\mcl{K}_1}
\oplus \frac{\R}{\mcl{K}_2}$. Moreover we have the sum of the ideals
$(A_0)+\mcl{K}_1\mcl{K}_2=\R$. So let $(A_0)+(B_0)=\R$ for some $B_0
\in \mcl{K}_1\mcl{K}_2$. Here in Proposition~\ref{prop:FundLemmaSchemes} we choose the set
\equ{E=V(\mcl{I})\cup V(\mcl{J})\cup V(\Gs(\mcl{I}_2)) \cup
V(\Gs(\mcl{J}_2)),\text{ a finite set}.} Here choice multiplicative monoid map $\Gs$
never takes a zero value. Now we note that
$\Gs(\mcl{I}_2)\Gs(\mcl{J}_2)=\Gs(\mcl{I}_2\mcl{J}_2) \neq 0$ by
multiplicativity.
So using Proposition~\ref{prop:FundLemmaSchemes} there
exists an element of the form $C_0=A_0+nB_0$ for some $n \in \R$ such
that \equ{(C_0)+\mcl{I}\mcl{J}(\Gs(\mcl{I}_2)\Gs(\mcl{J}_2))=\R.}

Now choose $x=1,y=C_0$ in their respective sets such that their
associated principal ideals are co-maximal and also
co-maximal to each ideal $\mcl{I},\mcl{J}$. We observe that
\equ{1+(\Gs(\mcl{I}_2)\Gs(\mcl{J}_1)u-\Gs(\mcl{I}_1)\Gs(\mcl{J}_2)v)xy
\in \mcl{K}_1 \cap \mcl{K}_2=\mcl{K}_1\mcl{K}_2 =
(\Gs(\mcl{I}_1)\Gs(\mcl{J}_1))=(\Gs(\mcl{I}_1\mcl{J}_1)).}

Now let
$1+(\Gs(\mcl{I}_2)\Gs(\mcl{J}_1)u-\Gs(\mcl{I}_1)\Gs(\mcl{J}_2)v)xy
=\Gs(\mcl{I}_1\mcl{J}_1)t$. We solve for $I_3\gb,J_3\ga$ in the following
equation which is obtained from equation~\ref{eq:DetEquation}.
\equ{\Gs(\mcl{J}_2)J_3\ga-\Gs(\mcl{I}_2)C_0I_3\gb=\Gs(\mcl{J}_2)xJ_3\ga-\Gs(\mcl{I}_2)yI_3\gb =t}

Now consider the two ideals
$\Gs(\mcl{J}_2)x\mcl{J}_3=\Gs(\mcl{J}_2)\mcl{J}_3,\Gs(\mcl{I}_2)C_0\mcl{I}_3$. They are
co-maximal. This is a consequence of the following. The pairs of ideals $\{(\Gs(\mcl{I}_2)),(\Gs(\mcl{J}_2))\},
\{(\Gs(\mcl{I}_2)),\mcl{J}_3\},\linebreak \{\mcl{I}_3,(\Gs(\mcl{J}_2))\},\{\mcl{I}_3,\mcl{J}_3\}$ 
are co-maximal and the pairs of ideals $\{(C_0),(\Gs(\mcl{J}_2))\},\{(C_0),\mcl{J}\}$ are
also co-maximal.  Hence the pair of ideals $\{(C_0),\mcl{J}_3\}$
is also co-maximal. So solving for $I_3\gb \in \mcl{I}_3,J_3\ga \in
\mcl{J}_3$ is possible in the above equation. This proves Theorem~\ref{theorem:SurjSL2Case}.
\end{proof}


\subsection{\bf{Surjectivity of the map $SL_k(\R) \lra 
\us{i=1}{\os{k}{\prod}}\mbb{PF}^{k-1}_{\mcl{I}_i}$ for all $k\geq 2$}}
\label{sec:SurjSLkHighDimension}
Here in this section now we prove the third main
Theorem~\ref{theorem:SurjSLkHighDimension} of this article.
\begin{proof}
Let $\gs_1$ be as defined in Theorem~\ref{theorem:SurjSLkHighDimension}.
We have that the image of $\gs_1$ is $SL_k(\R)$ invariant using Lemma~\ref{lemma:SLkRInvariance}
under the well defined action of $SL_k(\R)$ on $\us{i=1}{\os{k}{\prod}}\mbb{PF}^{k-1}_{\mcl{I}_i}$ 
given in Definition~\ref{defn:SLKAction}. Now we prove the following claim.
\begin{claim}
\label{claim:SurjProj} The image of $\gs_1$ equals
$\us{i=1}{\os{k}{\prod}}\mbb{PF}^{k-1}_{\mcl{I}_i}$.
\end{claim}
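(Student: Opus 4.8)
The plan is to show that the image of $\gs_1$ is not merely a union of orbits but a single orbit, equivalently that the left action of $SL_k(\R)$ on $\us{i=1}{\os{k}{\prod}}\mbb{PF}^{k-1}_{\mcl{I}_i}$ introduced above is transitive. By the two preceding claims $\gs_1$ is $SL_k(\R)$-equivariant, i.e. $g\bullet\gs_1(A)=\gs_1(Ag^{-1})$, and $\gs_1(\mrm{Id}_k)=([1:0:\cdots:0],[0:1:\cdots:0],\ldots,[0:\cdots:0:1])$ is a legitimate element of the codomain lying in the image. Hence if the action is transitive, then for any target $(e_1,\ldots,e_k)$ there is $g\in SL_k(\R)$ with $g\bullet\gs_1(\mrm{Id}_k)=(e_1,\ldots,e_k)$, which rewrites as $\gs_1(g^{-1})=(e_1,\ldots,e_k)$, so the target is in the image and the claim follows.

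To prove transitivity I would first standardize each coordinate separately. Fix $i$ and a unital representative $\ti{e}_i\in\R^k$ of $e_i$. Since $\mcl{I}_i\in\mathit{M}(\R)$ is a product of powers of finitely many maximal ideals, it is contained in only finitely many maximal ideals, so Theorem~\ref{theorem:ElementaryRowVector} furnishes $g_i\in SL_k(\R)$ with $\ti{e}_i g_i\equiv(1,0,\ldots,0)\bmod\mcl{I}_i$; multiplying on the left by a determinant-one signed-permutation matrix $P_i$ carrying the first row to the $i$-th row, the matrix $B_i:=P_i g_i^{-1}\in SL_k(\R)$ has its $i$-th row congruent modulo $\mcl{I}_i$ to a tuple representing $e_i$.

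Next I would glue the $B_i$ together. Put $\mcl{I}=\us{i=1}{\os{k}{\prod}}\mcl{I}_i$; by pairwise comaximality $\mcl{I}=\bigcap_{i=1}^{k}\mcl{I}_i$ and the chinese remainder isomorphism gives $\frac{\R}{\mcl{I}}\cong\us{i=1}{\os{k}{\prod}}\frac{\R}{\mcl{I}_i}$. Choose $\bar A\in M_k(\frac{\R}{\mcl{I}})$ with $\bar A\equiv B_i\bmod\mcl{I}_i$ for every $i$; then $\det\bar A\equiv\det B_i=1\bmod\mcl{I}_i$ for all $i$, so $\bar A\in SL_k(\frac{\R}{\mcl{I}})$. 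Since $\mcl{I}$ is contained in only finitely many maximal ideals, Corollary~\ref{cor:SurjModIdeal} lifts $\bar A$ to some $A\in SL_k(\R)$ with $A\equiv\bar A\bmod\mcl{I}$. For each $i$ the $i$-th row of $A$ is unital (because $\det A=1$, by cofactor expansion along that row) and is congruent modulo $\mcl{I}_i$, hence $\sim_{GR}$-equivalent, to a representative of $e_i$; therefore $\gs_1(A)=(e_1,\ldots,e_k)$. This both produces the desired preimage and establishes transitivity. As an alternative one can bypass the transitivity formulation and build $A$ directly from the explicit presentations $e_i=[\Gs(\mcl{J}_{i0})v_{i0}:\cdots:\Gs(\mcl{J}_{i,k-1})v_{i,k-1}]$ furnished by Theorem~\ref{theorem:RepHigherDimElement}, solving $\det A=1$ by the same comaximality and arithmetic-progression manipulations used for $k=2$ in Theorem~\ref{theorem:SurjSL2Case}.

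The crux I expect is precisely the passage from ``rows prescribed modulo the $\mcl{I}_i$, each only up to $\sim_{GR}$'' to ``one matrix of determinant exactly $1$'': this is what forces the two-stage argument, namely assembling determinant-one pieces over $\frac{\R}{\mcl{I}}$ via the chinese remainder theorem and then invoking surjectivity of $SL_k(\R)\to SL_k(\frac{\R}{\mcl{I}})$ to lift to an honest element of $SL_k(\R)$. The remaining ingredients — equivariance of $\gs_1$, non-emptiness of the image, and standardization of a single unital row — are essentially formal consequences of the results already proved.
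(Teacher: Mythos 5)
Your argument is correct, and it takes a genuinely different route from the paper. The paper proves Claim~\ref{claim:SurjProj} by an iterative reduction: it writes each row of the target matrix in the normal form $[\Gs(\mcl{I}_{j1})v_{j1}:\ldots:\Gs(\mcl{I}_{jk})v_{jk}]$ supplied by Theorem~\ref{theorem:RepHigherDimElement} and the choice monoid map $\Gs$ of Theorem~\ref{theorem:Comaximality}, then performs column operations to bring the $j$-th row to $e_j^k$, arranging (via the comaximality of the $\mcl{I}_i$ and the unital set condition) that the corrections used at stage $j$ lie in $\mcl{I}_1\cdots\mcl{I}_{j-1}$ so that the projective classes of the earlier rows are undisturbed; after $k$ stages the matrix is the identity and the orbit argument concludes. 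You instead decouple the rows entirely: Theorem~\ref{theorem:ElementaryRowVector} (legitimately applicable, since each $\mcl{I}_i\in\mathit{M}(\R)$ is a product of maximal ideals and hence lies in only finitely many maximal ideals) produces for each $i$ a matrix $B_i\in SL_k(\R)$ whose $i$-th row represents $e_i$ modulo $\mcl{I}_i$; the chinese remainder isomorphism $\frac{\R}{\mcl{I}}\cong\us{i=1}{\os{k}{\prod}}\frac{\R}{\mcl{I}_i}$ glues these into an element of $SL_k(\frac{\R}{\mcl{I}})$; and Corollary~\ref{cor:SurjModIdeal} lifts it to $A\in SL_k(\R)$, whose rows are automatically unital and componentwise congruent (hence $\sim_{GR}$-equivalent) to the prescribed representatives. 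This completely bypasses Theorem~\ref{theorem:RepHigherDimElement} and the map $\Gs$, and pushes all the work into the two earlier surjectivity results; the gain is a shorter, more modular proof whose only delicate point, as you correctly identify, is converting row-wise data given only up to $\sim_{GR}$ into a single matrix of determinant exactly $1$, which the CRT-plus-lifting step handles. The paper's longer reduction has the merit of exhibiting the explicit column operations and of exercising the representation machinery that it reuses in Theorem~\ref{theorem:SurjSumProductEquation}, but as a proof of this claim your version is sound and self-contained given the earlier results. One small cosmetic remark: the signed permutation $P_i$ should be chosen so that the sign change needed to keep determinant $1$ falls on a row other than the one carrying the representative of $e_i$; this is always possible for $k\geq 2$.
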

\begin{proof}[Proof of Claim]
Let
$([a_{11}:a_{12}:\ldots:a_{1k}],[a_{21}:a_{22}:\ldots:a_{2k}],\ldots,[a_{k1}:a_{k2}:\ldots:
a_{kk}])
\in \us{i=1}{\os{k}{\prod}}\mbb{PF}^{k-1}_{\mcl{I}_i}$. Let
$A=[a_{ij}]_{k \times k}\in M_{k \times k}(\R)$. Now we reduce the
matrix $A$ to an element in $SL_k(\R)$ to prove the claim in a step
by step manner.

Since each row generates the unit ideal using 
Lemma~\ref{lemma:Unital} we can right multiply $A$ by an
$SL_{k}(\R)\operatorname{-}$matrix so that $a_{11}$ element is a unit modulo
$\mcl{I}_1$. Now replace the first row by an equivalent row, where
$a_{11}=1$. Then we can transform the first row to
$e_1^k=(1,0,0,\ldots,0)$ using another $SL_k(\R)\operatorname{-}$matrix. Now we use
Theorem~\ref{theorem:RepHigherDimElement} to represent
appropriately the elements of the projective spaces by choosing the
map $\Gs=\Gs_{\mcl{F}}$ on the finitely generated monoid $\mathit{M}(\mcl{F}),$
where \equ{\mcl{F}=V(\mcl{I}_1)\cup V(\mcl{I}_2)\cup \ldots \cup
V(\mcl{I}_k).}

Let the second row be
\equ{[\Gs(\mcl{I}_{21})v_{21}:\Gs(\mcl{I}_{22})v_{22}:\ldots:\Gs(\mcl{I}_{2k})v_{2k}]}
We have \equ{\us{i=1}{\os{k}{\sum}}(\Gs(\mcl{I}_{2i})v_{2i})=\R,
\text{ and }
\mcl{I}_1+\us{i=2}{\os{k}{\sum}}(\Gs(\mcl{I}_{2i})v_{2i})=\R} by the
choice of the monoid. Hence we get
\equ{(\Gs(\mcl{I}_{21})v_{21})\mcl{I}_1+\us{i=2}{\os{k}{\sum}}(\Gs(\mcl{I}_{2i})v_{2i})=\R}
So there exists $i_1 \in \mcl{I}_1$ such that the vector
\equ{(\Gs(\mcl{I}_{21})v_{21}i_1,\Gs(\mcl{I}_{22})v_{22},\ldots,\Gs(\mcl{I}_{2k})v_{2k})}
is unital in $\R$. Now $\mcl{I}_2$ satisfies the $USC$. So by Proposition~\ref{prop:Unital} there exist
$s_1,s_3,\ldots,s_k \in \R$ such that the element
\equ{\Gs(\mcl{I}_{22})v_{22}+ \Gs(\mcl{I}_{21})v_{21}i_1s_1 +
\us{i=3}{\os{k}{\sum}}\Gs(\mcl{I}_{2i})v_{2i}s_i} is a unit modulo
$\mcl{I}_2$. The second summand in the above expression is in the
ideal $\mcl{I}_1$. Now we use a suitable column operation on $A$ to
transform $a_{22}$ to the above expression. This does not alter the
first row because it replaces the element $a_{12}$ by an element of
$\mcl{I}_1$. Hence we could replace the first row of $A$ back by
$e_1^k$. Now we have obtained $a_{22}$ a unit $\mod \mcl{I}_2$. We
can make this element $a_{22}=1$ exactly by replacing the second row
with another equivalent projective space element representative in
$\mbb{PF}^k_{\mcl{I}_2}$ however in the same equivalence class. Now
by applying suitable column operations we can transform the second
row to $e_2^k=(0,1,\ldots,0)$.

Inductively suppose we arrive at the $j^{th}\operatorname{-}$row for $j\leq k$. Let
the $j^{th}\operatorname{-}$row be given by
\equ{[\Gs(\mcl{I}_{j1})v_{j1}:\Gs(\mcl{I}_{j2})v_{j2}:\ldots:\Gs(\mcl{I}_{jk})v_{jk}]}
using again Theorem~\ref{theorem:RepHigherDimElement} with
respect to the same monoid map $\Gs$.

We have \equ{\us{i=1}{\os{k}{\sum}}(\Gs(\mcl{I}_{ji})v_{ji})=\R,
\text{ and } \mcl{I}_1\mcl{I}_2\ldots\mcl{I}_{j-1}
+\us{i=j}{\os{k}{\sum}}(\Gs(\mcl{I}_{ji})v_{ji})=\R} by the choice of
the monoid. Hence we get
\equ{\us{i=1}{\os{j-1}{\sum}}(\Gs(\mcl{I}_{ji})v_{ji})\mcl{I}_1\mcl{I}_2\ldots\mcl{I}_{j-1}+
\us{i=j}{\os{k}{\sum}}(\Gs(\mcl{I}_{ji})v_{ji})=\R}
So there exist $t_1,t_2,\ldots,t_{j-1} \in
\us{i=1}{\os{j-1}{\prod}}\mcl{I}_i$ such that the vector
\equ{(\Gs(\mcl{I}_{j1})v_{j1}t_1,\Gs(\mcl{I}_{j2})v_{j2}t_2,\ldots,
\Gs(\mcl{I}_{j(j-1)})v_{j(j-1)}t_{j-1},\Gs(\mcl{I}_{jj})v_{jj},\ldots,
\Gs(\mcl{I}_{jk})v_{jk})}
is unital in $R$. Now $\mcl{I}_j$ satisfies the $USC$. 
So by Proposition~\ref{prop:Unital} we make $a_{jj}$
element a unit $\mod \mcl{I}_j$ without actually changing the
previous $(j-1)$-rows as projective space elements because
$t_1,t_2,\ldots,t_{j-1}\in \us{i=1}{\os{j-1}{\bigcap}}\mcl{I}_i$.
Now we make the $a_{jj}=1$ exactly and then by applying (col. oper.) an
$SL_{k+1}(\R)$ matrix make the $j^{th}\operatorname{-}$row equal to
$e^k_j=(0,0,\ldots,0,1,0,\ldots,0)$.

We continue this procedure till $j=k$. We arrive at the identity
matrix. Hence the map $\gs_1$ is surjective and Claim~\ref{claim:SurjProj} follows.
\end{proof}
Continuing with the proof of Theorem~\ref{theorem:SurjSLkHighDimension}, we observe 
similarly the map $\gs_2$ is also surjective. This finishes the proof of Theorem~\ref{theorem:SurjSLkHighDimension}.
\end{proof}
Now we prove a corollary to Theorem~\ref{theorem:SurjSLkHighDimension}.
\begin{cor}
\label{cor:SurjGrassmannianHighDimension} Let $\R$ be a
commutative ring with unity.
\begin{enumerate}
\item Let $\R$ be a Dedekind type domain.
\item $\R$ has infinitely many maximal ideals.
\end{enumerate}
Let $\mathit{M}(\R)$ be the monoid generated by all maximal
ideals in $\R$. Let $\mcl{I}_1,\mcl{I}_2,\ldots,\mcl{I}_r\in
\mathit{M}(\R)$ be $r\operatorname{-}$ pairwise co-maximal proper ideals. Let $k \geq 2$ be
a positive integer. Consider for $r \leq k$
\equ{G_{r\times k}(\R)=\{A=[a_{ij}]_{r \times k} \in M_{r\times k}(\R) \mid
\text{ such that the } r\times r \text{ minors generate unit
ideal}\}.} Then the map \equ{\gt:G_{r\times k}(\R) \lra
\mbb{PF}^{k-1}_{\mcl{I}_1} \times \mbb{PF}^{k-1}_{\mcl{I}_2} \times
\ldots \times \mbb{PF}^{k-1}_{\mcl{I}_r}} given by \equa{\gt:(A) &=
([a_{11}:a_{12}:\ldots:a_{1k}],[a_{21}:a_{22}:\ldots:a_{2k}],\ldots,
[a_{r1}:a_{r2}:\ldots:a_{rk}])}
is surjective.
\end{cor}
\begin{proof}
Since the Dedekind type domain has infinitely many maximal ideals by
hypothesis, let $\mcl{I}_{r+1},\ldots,\mcl{I}_k \in \mathit{M}(\R)$
be pairwise co-maximal which are also co-maximal to each of
$\mcl{I}_1,\ldots,\mcl{I}_r$. Such ideals exist. Now using the main
Theorem~\ref{theorem:SurjSLkHighDimension} we conclude surjectivity
of this map $\gt$. Hence this
Corollary~\ref{cor:SurjGrassmannianHighDimension} also follows.
\end{proof}


\subsection{\bf{An example of a fixed point subgroup of $SL_k(\R)$, where surjectivity need 
not hold}}
\label{sec:FixedPointSubgroup}
In this section we give an example where Theorem~\ref{theorem:SurjSLkHighDimension} does not hold for a fixed
point subgroup. 
\begin{example}
\label{example:FixedPointSubgroup}
Let $\mbb{K}$ be an algebraically closed field. Let
$\R=\mbb{K}[z_1,z_2,\ldots,z_n]$. Consider the standard action of
$SL_2(\R)$ on $\R^2$. Let $G(\R)$ be the stabilizer subgroup of the
element $(1,1)^{tr}\in \R^2$ i.e. $G_2(\R)=\{A \in SL_2(\R)\mid
A.(1,1)^{tr}=(1,1)^{tr}\}$. Let $\mcl{M},\mcl{N}$ be two maximal
ideals in $\R$. Then the map \equ{G_2(\R) \lra \mbb{PF}^1_{\mcl{M}}
\times \mbb{PF}^1_{\mcl{N}}} is not surjective.

We observe that $G_2(\R)$ is also given as follows.
\equ{G_2(\R)=\{\mattwo{1+b}{-b}{b}{1-b}\mid b \in \R\}} So the image
of $G_2(\R)$ is exactly $\{([1+b:-b],[b:1-b])\mid b \in \R\} \subs
\mbb{PF}^1_{\mcl{M}} \times \mbb{PF}^1_{\mcl{N}}
=\mbb{PF}^1_{\mbb{K}} \times \mbb{PF}^1_{\mbb{K}} \subs
\mbb{PF}^3_{\mbb{K}}$. The image is precisely
\equ{([x_1:y_1],[x_2:y_2]) \in \mbb{PF}^1_{\mbb{K}} \times
\mbb{PF}^1_{\mbb{K}},\text{ where }(x_1+y_1)(x_2+y_2) \neq 0.} In
fact the image does not contain any element from the set
\equ{\bigg(\{[1:-1]\} \times \mbb{PF}^1_{\mbb{K}}\bigg) \bigcup
\bigg(\mbb{PF}^1_{\mbb{K}} \times \{[1:-1]\}\bigg)} which is a union
of two projective lines meeting at the point $([1:-1],[1:-1])$.
\end{example}


\section{\bf{A surjectivity theorem for the sum-product equation}}
\label{sec:SumProductEquation}
In this section we prove the following surjectivity theorem for the sum-product
equation.
\begin{theorem}
\label{theorem:SurjSumProductEquation} Let $\R$ be a commutative ring
with unity. Suppose $\R$ is a Dedekind type domain (refer to
Definition~\ref{defn:Good}). Let $\mathit{M}(\R)$ be the monoid
generated by maximal ideals in $\R$. Let
$\mcl{I}_1,\mcl{I}_2,\ldots,\mcl{I}_r\in \mathit{M}(\R)$ be $r\operatorname{-}$
pairwise co-maximal proper ideals. Let $r\geq 2,k \geq 2$ be two positive
integers. Consider \equan{SumProduct}{M_{r\times k}(\R)=\{A=[a_{ij}]_{r \times k}\mid
\us{j=1}{\os{k}{\sum}}\us{i=1}{\os{r}{\prod}}a_{ij}=1\}.} Then the
map \equ{\gl:M_{r\times k}(\R) \lra \mbb{PF}^{k-1}_{\mcl{I}_1} \times
\mbb{PF}^{k-1}_{\mcl{I}_2} \times \ldots \times
\mbb{PF}^{k-1}_{\mcl{I}_r}} given by \equ{\gl:(A) =
([a_{11}:a_{12}:\ldots:a_{1k}],[a_{21}:a_{22}:\ldots:a_{2k}],\ldots,
[a_{r1}:a_{r2}:\ldots:a_{rk}])}
is surjective.
\end{theorem}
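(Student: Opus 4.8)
The plan is to follow the strategy of the proof of Theorem~\ref{theorem:SurjSL2Case}: use the nowhere-zero choice monoid map to normalise the target projective data, realise the prescribed rows by a parametrised family of matrices lying on the sum-product hypersurface, and then solve the single scalar constraint $\sum_{j=1}^{k}\prod_{i=1}^{r}a_{ij}=1$ for the free parameters by repeated use of the fundamental lemma on arithmetic progressions.

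\emph{Normalising the target.} Set $\mcl{F}=V(\mcl{I}_1)\cup\cdots\cup V(\mcl{I}_r)$ and let $\Gs\colon\mathit{M}(\mcl{F})\lra\R$ be the nowhere-zero choice monoid multiplicative map of Theorem~\ref{theorem:Comaximality}. Given a target $([a_{i1}:\cdots:a_{ik}])_{i=1}^{r}$, apply Theorem~\ref{theorem:RepHigherDimElement} to each row to write the $i$-th point as $[\Gs(\mcl{J}_{i1})v_{i1}:\cdots:\Gs(\mcl{J}_{ik})v_{ik}]$ with $\mcl{J}_{ij}\sups\mcl{I}_i$, $\sum_{j}\Gs(\mcl{J}_{ij})=\R$, each $v_{ij}\in\R\bs\bigcup_{\mcl{M}\in\mcl{F}}\mcl{M}$, and $\sum_{j}(\Gs(\mcl{J}_{ij})v_{ij})=\R$; the representation theorem also leaves the freedom to replace $v_{ij}$ by any element congruent to it modulo a suitable ideal contained in $\mcl{I}_i$. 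Write $\gamma_j=\prod_{i=1}^{r}\Gs(\mcl{J}_{ij})$. Since $\Gs$ is multiplicative on co-maximal ideals, $\Gs(\mcl{J}_{ij})$ is a unit modulo $\mcl{I}_{i'}$ for $i'\neq i$; combining this with $\sum_j\Gs(\mcl{J}_{ij})=\R$, the pairwise co-maximality of the $\mcl{I}_i$, and the auxiliary co-maximality Lemma~\ref{lemma:CMHNDDIMONE}, one gets $(\gamma_1,\ldots,\gamma_k)=\R$.

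\emph{Solving the constraint.} Take the ansatz $a_{ij}=\Gs(\mcl{J}_{ij})w_{ij}$, where $w_{ij}\equiv v_{ij}$ modulo $\mcl{I}_i$ (so row $i$ stays in its class), $w_{ij}\nin\mcl{M}$ for all $\mcl{M}\in\mcl{F}$, and $\sum_j(\Gs(\mcl{J}_{ij})w_{ij})=\R$ (so row $i$ is unital and represents the correct point of $\mbb{PF}^{k-1}_{\mcl{I}_i}$). Then $\prod_i a_{ij}=\gamma_j\prod_i w_{ij}$ and the sum-product equation becomes $\sum_{j=1}^{k}\gamma_j\prod_{i=1}^{r}w_{ij}=1$. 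Freeze rows $2,\ldots,r$ at a first admissible choice, set $c_j=\prod_{i=2}^{r}w_{ij}$, and observe that each $c_j$ avoids every maximal ideal in $\mcl{F}$; together with $(\gamma_1,\ldots,\gamma_k)=\R$ this gives $(\gamma_1c_1,\ldots,\gamma_kc_k)+\mcl{I}_i=\R$ for every $i$, and after a preliminary perturbation of the $v_{ij}$ dodging the finitely many extra maximal ideals through which the $c_j$ might pass (legitimate, as in a dedekind type domain every non-zero element lies in only finitely many maximal ideals), one arranges $(\gamma_1c_1,\ldots,\gamma_kc_k)=\R$. It remains to choose $w_{11},\ldots,w_{1k}$ in their prescribed cosets modulo $\mcl{I}_1$ so that the first row is unital and the linear equation $\sum_j\gamma_jc_jw_{1j}=1$ is satisfied; this is achieved by Lemma~\ref{lemma:UnitalDD} (used here exactly as Theorem~\ref{theorem:Unital} is used in the proof of Theorem~\ref{theorem:SurjSLkHighDimension}) followed by back-solving the linear equation, yielding a matrix $A\in M(r,k)(\R)$ with $\gl(A)$ equal to the given point.

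\emph{The main obstacle.} The essential difficulty, just as in Theorem~\ref{theorem:SurjSL2Case}, is to keep all the side-conditions mutually compatible while forcing $\sum_j\prod_i a_{ij}=1$: every row must remain in its prescribed class modulo $\mcl{I}_i$ \emph{and} stay unital in $\R$, and the principal ideals generated by the chosen parameters and the values of $\Gs$ must stay pairwise co-maximal so that the Chinese remainder theorem and Lemma~\ref{lemma:FundLemma} (equivalently Theorem~\ref{theorem:FundLemmaSchemes}) apply at each step. I would resolve this by picking the free parameters one at a time, at each stage using Lemma~\ref{lemma:FundLemma} to slide within an arithmetic progression and thereby avoid the finitely many maximal ideals that would obstruct the remaining choices. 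Once every parameter is fixed, $\sum_j\prod_i a_{ij}=1$ holds on the nose, each row reduces to the desired projective point, and surjectivity of $\gl$ follows.
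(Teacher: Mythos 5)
There is a genuine gap, and it sits exactly at the point your proposal calls ``back-solving the linear equation.'' Your ansatz $a_{ij}=\Gs(\mcl{J}_{ij})w_{ij}$ with \emph{every} $w_{ij}\nin\mcl{M}$ for all $\mcl{M}\in\mcl{F}$ forces, for each $j$, the coefficient $\gamma_jc_j/\Gs(\mcl{J}_{1j})=\prod_{i\geq 2}\Gs(\mcl{J}_{ij})w_{ij}$ to be a unit modulo $\mcl{I}_1$ (each factor avoids every maximal ideal in $V(\mcl{I}_1)$). Since you also keep $w_{1j}\equiv v_{1j}\bmod\mcl{I}_1$, the quantity $\sum_j\gamma_jc_jw_{1j}$ is pinned modulo $\mcl{I}_1$ to $\sum_j\Gs(\mcl{J}_{1j})v_{1j}\epsilon_j$ with the $\epsilon_j$ units mod $\mcl{I}_1$, and this need not be $1$; no choice of the frozen rows or of $w_{1j}$ in its coset can fix it. Concretely, take $\R=\Z$, $r=k=2$, $\mcl{I}_1=(2)$ and first target row $[1:1]\in\mbb{PF}^1_{(2)}$: every representative of $[1:1]$ has both entries odd, so $a_{11}a_{21}+a_{12}a_{22}\equiv a_{21}+a_{22}\bmod 2$, and the equation $=1$ forces one of $a_{21},a_{22}$ to be \emph{even} --- i.e.\ the second row must pass through a maximal ideal of $\mcl{I}_1$, which your ansatz forbids. (The target is reachable, e.g.\ $\bigl(\begin{smallmatrix}1&1\\2&-1\end{smallmatrix}\bigr)$ for $\mcl{I}_2=(3)$, but not by any matrix of your restricted form.)

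This is precisely the subtlety the paper's proof is built around. In its $r=2$ base case it first arranges only the congruence $\sum_jx^0_jy^0_j\equiv 1\bmod\mcl{I}_2$ (via $x_j\mapsto x_j+aw_j$ with $a\in\mcl{I}_1$) and then corrects the error exactly, by replacing $y^0_j$ with $y^0_j-z^0_ji_2$ --- a correction that deliberately pushes the second row's entries into non-units modulo $\mcl{I}_1$ --- while Lemma~\ref{lemma:UnitalDD} is used to restore unitality of the perturbed first row (a perturbation by elements of $\mcl{I}_1\mcl{I}_2$, not a choice of units). For $r>2$ the paper then collapses rows $2,\ldots,r$ into a single row and invokes the $r=2$ case for the ideal $\bigl(\prod\Gs(\mcl{J}_{ij})w_{ij}\bigr)\mcl{I}_2\cdots\mcl{I}_k$ against $\mcl{I}_1$. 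Your normalisation via $\Gs$ and the unitality of $\{\gamma_j\}$ match the paper's setup, but to repair the argument you must abandon the requirement that the $w_{ij}$ avoid the maximal ideals of the ideals $\mcl{I}_{i'}$ with $i'\neq i$, and replace the unit-coefficient linear solve by the congruence-then-correction mechanism above.
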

\begin{proof}
We have $r \geq 2$. Let us prove this by induction on $r$. First we
prove for $r=2$. Let \equ{([x_1:\ldots:x_k],[y_1:\ldots:y_k]) \in
\mbb{PF}^{k-1}_{\mcl{I}_1} \times \mbb{PF}^{k-1}_{\mcl{I}_2}.}
Suppose there exists
\equ{([x^0_1:\ldots:x^0_k],[y^0_1:\ldots:y^0_k])=([x_1:\ldots:x_k],[y_1:\ldots:y_k])\in
\mbb{PF}^{k-1}_{\mcl{I}_1} \times \mbb{PF}^{k-1}_{\mcl{I}_2}} such
that $\us{j=1}{\os{k}{\sum}} x^0_jy^0_j = 1 + i_2,$ where $i_2 \in
\mcl{I}_2$. Let $\us{j=1}{\os{k}{\sum}} x^0_jz^0_j=1$ because we
have $\us{j=1}{\os{k}{\sum}} (x^0_j)=\R$. By choosing
$u_j=x^0_j,v_j=y^0_j-z_j^0i_2$ we have $\us{j=1}{\os{k}{\sum}}
u_jv_j=1$ and
\equ{([u_1:\ldots:u_k],[v_1:\ldots:v_k])=([x_1:\ldots:x_k],[y_1:\ldots:y_k])\in
\mbb{PF}^{k-1}_{\mcl{I}_1} \times \mbb{PF}^{k-1}_{\mcl{I}_2}} So it
is enough to prove that there exists
$([x^0_1:\ldots:x^0_k],[y^0_1:\ldots:y^0_k])=([x_1:\ldots:x_k],[y_1:\ldots:y_k])\in
\mbb{PF}^{k-1}_{\mcl{I}_1} \times \mbb{PF}^{k-1}_{\mcl{I}_2}$ such
that $\us{j=1}{\os{k}{\sum}} x^0_jy^0_j \equiv 1 \mod \mcl{I}_2$.
Since $\mcl{I}_1+\mcl{I}_2=\R$, let $a \in \mcl{I}_1,b\in \mcl{I}_2$
such that $a+b=1-\us{i=1}{\os{k}{\sum}}x_iy_i$. Now there exist
$w_i\in \R$ such that $\us{i=1}{\os{k}{\sum}}w_iy_i=1$ because
$\us{i=1}{\os{k}{\sum}}(y_i)=\R$. Hence $\us{i=1}{\os{k}{\sum}}
x_iy_i+aw_iy_i=1-b \equiv 1 \mod \mcl{I}_2$. Now we have
\equ{\us{j=1}{\os{k}{\sum}}(x_j+aw_j)+\mcl{I}_2=\R,
\us{j=1}{\os{k}{\sum}}(x_j+aw_j)+\mcl{I}_1=\R.} Hence
$\us{j=1}{\os{k}{\sum}}(x_j+aw_j)+\mcl{I}_1\mcl{I}_2=\R$. So using
Proposition~\ref{prop:UnitalDD} we conclude that there exist
$t_1,t_2,\ldots,t_k\in \mcl{I}_1\mcl{I}_2$ such that
$\us{j=1}{\os{k}{\sum}}(x_j+aw_j+t_j)=\R$ and
\equ{\us{j=1}{\os{k}{\sum}}
x_jy_j+aw_jy_j+t_jy_j=1-b+\us{j=1}{\os{k}{\sum}}t_jy_j \equiv 1 \mod
\mcl{I}_2} So choosing $x_j^0=x_j+aw_j+t_j,y_j^0=y_j$ and plugging it back for $u_j,v_j$ proves
this Theorem~\ref{theorem:SurjSumProductEquation} for the case when $r=2$. In fact we have 
\equ{u_j\equiv x_j \mod \mcl{I}_1, v_j \equiv y_j \mod \mcl{I}_2,\us{i=1}{\os{k}{\sum}}u_iv_i=1.} 

Now we prove for any positive integer $r>2$. Let
\equ{\mcl{F}=V(\mcl{I}_1) \cup V(\mcl{I}_2) \cup \ldots \cup
V(\mcl{I}_r).} It is a finite set because $V(\mcl{I}_i)$ is a finite set for every $1\leq i\leq r$. 
Let $\Gs=\Gs_{\mcl{F}}:\mathit{M}(\mcl{F}) \lra \R$ be
the nowhere zero choice multiplicative monoid map using
Theorem~\ref{theorem:Comaximality}. We choose this map $\Gs=\Gs_{\mcl{F}}$ in Theorem~\ref{theorem:RepHigherDimElement}
to obtain a representative element
\equa{&([\Gs(\mcl{J}_{11})v_{11}:\Gs(\mcl{J}_{12})v_{12}:\ldots:\Gs(\mcl{J}_{1k})v_{1k}],
[\Gs(\mcl{J}_{21})v_{21}:\Gs(\mcl{J}_{22})v_{22}:\ldots:
\Gs(\mcl{J}_{2k})v_{2k}],\ldots,\\
&[\Gs(\mcl{J}_{r1})v_{r1}:\Gs(\mcl{J}_{r2})v_{r2}:\ldots:\Gs(\mcl{J}_{rk})v_{rk}])
\in \us{i=1}{\os{r}{\prod}}\mbb{PF}^{k-1}_{\mcl{I}_i}.} Let
$\mcl{I}=\us{i=1}{\os{r}{\prod}} \mcl{I}_i$. We note that
$(v_{ij})+\mcl{I}=\R$ for every $(i,j)\in \{1,2,\ldots,r\} \times
\{1,2,\ldots,k\}$. We replace $v_{ij}$ by $w_{ij}\in v_{ij}+\mcl{I}$
such that the following two properties hold.

\begin{itemize}
\item For every $(i,j) \neq (e,f) \in \{1,2,\ldots,r\} \times
\{1,2,\ldots,k\}$ the sets of maximal ideals containing $w_{ij}$ and
$w_{ef}$ are disjoint i.e. $V(w_{ij}) \cap V(w_{ef})=\es$.

\item For every $(i,j),(e,f) \in \{1,2,\ldots,r\} \times
\{1,2,\ldots,k\}$ the sets of maximal ideals containing $w_{ij}$ and
$\Gs(\mcl{J}_{ef})$ are disjoint i.e. $V(w_{ij}) \cap
V(\Gs(\mcl{J}_{ef})) = \es$.

\end{itemize}

This can be done using Lemma~\ref{lemma:FundLemma} on
arithmetic progressions for Dedekind type domains. This immediately
implies that for each $i$ we have a well defined element
representing the same element
\equ{[\Gs(\mcl{J}_{i1})w_{i1}:\Gs(\mcl{J}_{i2})w_{i2}:\ldots:\Gs(\mcl{J}_{ik})w_{ik}]=
[\Gs(\mcl{J}_{i1})v_{i1}:\Gs(\mcl{J}_{i2})v_{i2}:\ldots:\Gs(\mcl{J}_{ik})v_{ik}]
\in \mbb{PF}^{k-1}_{\mcl{I}_i}}

We observe that any maximal ideal containing the coordinates
$\Gs(\mcl{J}_{ij})w_{ij},1 \leq j \leq k$  contains all
$\Gs(\mcl{J}_{ij})$ for $1 \leq j \leq k$ and hence has to be the unit
ideal which is a contradiction.

Now for a fixed $1 \leq j \leq k$ we observe that the maximal ideals
containing $\Gs(\mcl{J}_{ij})$ outside $V(\mcl{I}_i)$ distinct for
$1 \leq i \leq r$ using Observation~\ref{obs:DistinctMax} as we have $\mcl{J}_{ij} \sups
\mcl{I}_i$ for $1 \leq i \leq r$ with $\mcl{I}_i$ being mutually
co-maximal. We have for $1 \leq j \leq k$
\equ{\us{i=2}{\os{r}{\prod}}\Gs(\mcl{J}_{ij})=\Gs(\us{i=2}{\os{r}{\prod}}\mcl{J}_{ij}).}

\begin{claim}
The set \equ{\{\us{i=2}{\os{r}{\prod}}\Gs(\mcl{J}_{ij})\mid j=1,\ldots,k\}}
is unital.
\end{claim}
\begin{proof}[Proof of Claim]
We have $r>2$. Consider a maximal ideal $\mcl{M}$ containing the set
$\{\us{i=2}{\os{r}{\prod}}\Gs(\mcl{J}_{ij})\mid j=1,\ldots,k\}$.
Then for some $1\leq j\leq k, \mcl{M}$ contains one of the factors $\Gs(\mcl{J}_{ij})$ for
every $2 \leq i \leq r$. If $\Gs(\mcl{J}_{i_1j_1}),\Gs(\mcl{J}_{i_2j_2})\in \mcl{M}$ with $2\leq i_1\neq i_2 \leq r$  
then $\Gs(\mcl{N}_1^{t}),\Gs(\mcl{N}_2^{s})\in \mcl{M}$ for some $t,s\geq 0$ and distinct maximal ideals $\mcl{N}_1,\mcl{N}_2\in \mcl{F}$
with $\mcl{N}_1\sups \mcl{I}_{i_1},\mcl{N}_2 \sups \mcl{I}_{i_2}$.
This contradicts Observation~\ref{obs:DistinctMax}.
Now if for some $2\leq i\leq r, \Gs(\mcl{J}_{ij})\in \mcl{M}$ for all $1\leq j\leq k$ then this is a contradiction because  
$\us{j=1}{\os{k}{\sum}}(\Gs(\mcl{J}_{ij}))=\R$. So the set
\equ{\{\us{i=2}{\os{r}{\prod}}\Gs(\mcl{J}_{ij})\mid j=1,\ldots,k\}}
is unital. This proves the claim.
\end{proof}
Continuing with the proof of the theorem, similarly now we observe that the set
\equ{\{\us{i=2}{\os{r}{\prod}}\Gs(\mcl{J}_{ij})w_{ij}=
\Gs(\us{i=2}{\os{r}{\prod}}\mcl{J}_{ij})w_{ij}\mid
j=1,\ldots,k\}} is also unital.

Now consider the element
\equ{[\us{i=2}{\os{r}{\prod}}\Gs(\mcl{J}_{i1})w_{i1}:
\us{i=2}{\os{r}{\prod}}\Gs(\mcl{J}_{i2})w_{i2}:\ldots:
\us{i=2}{\os{r}{\prod}}\Gs(\mcl{J}_{ik})w_{ik}]
\in
\mbb{PF}^{k-1}_{\bigg(\big(\us{i=2,j=1}{\os{r,k}{\prod}}\Gs(\mcl{J}_{ij})w_{ij}\big)\mcl{I}_2
\mcl{I}_3\ldots\mcl{I}_r\bigg)}.}

Now we reduce to the case when $r=2$ and apply the case $r=2$ for the above element
in
\equ{\mbb{PF}^{k-1}_{\bigg(\big(\us{i=2,j=1}{\os{r,k}{\prod}}\Gs(\mcl{J}_{ij})w_{ij}\big)
\mcl{I}_2\mcl{I}_3\ldots\mcl{I}_r\bigg)}}
and the element
$[\Gs(\mcl{J}_{11})w_{11}:\Gs(\mcl{J}_{12})w_{12}:\ldots:\Gs(\mcl{J}_{1k})w_{1k}]
\in \mbb{PF}^{k-1}_{\mcl{I}_1}$. We note that the two ideals
\equ{\bigg(\big(\us{i=2,j=1}{\os{r,k}{\prod}}\Gs(\mcl{J}_{ij})w_{ij}\big)\mcl{I}_2\mcl{I}_3
\ldots\mcl{I}_r\bigg),\mcl{I}_1}
are co-maximal. Now there exist elements $b_{1j}: 1\leq j \leq k$ with $b_{1j}
\equiv \Gs(\mcl{J}_{1j})w_{1j} \mod \mcl{I}_1$ and
\equ{[b_{11}:b_{12}:\ldots:b_{1k}]=[\Gs(\mcl{J}_{11})w_{11}:\Gs(\mcl{J}_{12})w_{12}\ldots:
\Gs(\mcl{J}_{1k})w_{1k}]
\in \mbb{PF}^{k-1}_{\mcl{I}_1}}  and there exist $t_1,t_2,\ldots,t_k
\in \mcl{I}_2\mcl{I}_3\ldots\mcl{I}_r$ such that
\equ{\us{l=1}{\os{k}{\sum}}b_{1l}\bigg(\us{i=2}{\os{r}{\prod}}\Gs(\mcl{J}_{il})w_{il}+
t_l\us{i=2,j=1}{\os{r,k}{\prod}}\Gs(\mcl{J}_{ij})w_{ij}\bigg)=1=\us{l=1}{\os{k}{\sum}}b_{1l}\bigg(\us{i=2}{\os{r}{\prod}}\Gs(\mcl{J}_{il})w_{il}\bigg)\bigg(1+t_l\us{i=2,j=1,j\neq l}{\os{r,k}{\prod}}\Gs(\mcl{J}_{ij})w_{ij}\bigg).}

Now consider the same element with these representatives
\equ{([b_{11}:b_{12}:\ldots:b_{1k}],[b_{21}:b_{22}:\ldots:b_{2k}],\ldots,[b_{r1}:b_{r2}:
\ldots:b_{rk}])
\in \mbb{PF}^{k-1}_{\mcl{I}_1} \times \mbb{PF}^{k-1}_{\mcl{I}_2}\times
\ldots \times \mbb{PF}^{k-1}_{\mcl{I}_r},} where for $r>l>1$ we have
$b_{ij}=\Gs(\mcl{J}_{ij})w_{ij}$ and for $l=r$ we have
\equ{b_{rl}=\Gs(\mcl{J}_{rl})w_{rl}\bigg(1+t_l\us{i=2,j=1,j\neq l}{\os{r,k}{\prod}}\Gs(\mcl{J}_{ij})w_{ij}\bigg)=\Gs(\mcl{J}_{rl})w_{rl}+t_l\bigg(\frac{\us{i=2,j=1}{\os{r,k}{\prod}}
\Gs(\mcl{J}_{ij})w_{ij}}{\us{i=2}{\os{r-1}{\prod}}\Gs(\mcl{J}_{il})w_{il}}\bigg).}
Then we observe that
\equ{\us{j=1}{\os{k}{\sum}}\us{i=1}{\os{r}{\prod}}b_{ij}=1.}

The map $\gl$ is surjective and Theorem~\ref{theorem:SurjSumProductEquation} follows for
any $r>2$.
\end{proof}
\begin{note}
For $r=1$ Theorem~\ref{theorem:SurjSumProductEquation} is not true. 
Choose $\R=\Z$. $\mcl{I}_1=p\Z$. The point \equ{[1:-1]\in \mbb{PF}^1_{\mcl{I}_1}=\mbb{PF}^1_{p}} is
not in the image of $M_{1\times 2}(\Z)$.
\end{note}


\section{\bf{Appendix}}
\label{sec:Appendix}
~\\
In this section we prove that a noetherian ring $\mcl{R}$ which satisfies properties $(1),(2),(3)$ in Definition~\ref{defn:Good} is a Dedekind domain.
We state the theorem as:
\begin{theorem}
\label{theorem:DD}
Let $\mcl{R}$ be a noetherian ring which satisfies properties $(1),(2),(3)$ in Definition~\ref{defn:Good}. Then $\mcl{R}$ is a Dedekind domain and hence 
$\mcl{R}$ satisfies property $(4)$ in Definition~\ref{defn:Good}.
\end{theorem}
\begin{proof}
First we prove that $\mcl{R}$ is a domain with the following claim.
\begin{claim}
\label{claim:ID}
With the hypothesis in Theorem~\ref{theorem:DD} let $\mcl{M}$ be a maximal ideal. Let $\mcl{S}_{\mcl{M}^i}=\mcl{M}^i\bs \mcl{M}^{i+1},i\geq 0$.
Then for every $0\leq i\leq j$, there exist $s_i\in \mcl{S}_{\mcl{M}^i},s_j\in \mcl{S}_{\mcl{M}^j}$ such that $s_is_j\in \mcl{S}_{\mcl{M}^{i+j}}$ and hence non-zero.
\end{claim}
\begin{proof}[Proof of Claim]
Suppose not then we have 
\equ{\mcl{M}^{i+j}=(\mcl{M}^i).(\mcl{M}^j)=\sum \big(\mcl{S}_{\mcl{M}^i}\bigsqcup \mcl{M}^{i+1}\big).\big(\mcl{S}_{\mcl{M}^j}\bigsqcup \mcl{M}^{j+1}\big) \subs \mcl{M}^{i+j+1}}
which is a contradiction to property $(1)$. Hence the claim follows.
\end{proof}
Let $a,b\in \mcl{R}^{*}$. Then there exist $0\leq i,j$ such that $a\in \mcl{S}_{\mcl{M}^i},b\in \mcl{S}_{\mcl{M}^j}$ using property $(2)$.
Using Claim~\ref{claim:ID} there exist $s_i\in \mcl{S}_{\mcl{M}^i},s_j\in \mcl{S}_{\mcl{M}^j}$ such that $s_is_j\in \mcl{S}_{\mcl{M}^{i+j}}$. Using property $(3)$ we have 
$a=ss_i+a',b=ts_j+b'$ where $s,t\in \mcl{R}\bs \mcl{M}=\mcl{S}_{\mcl{M}^0},a'\in \mcl{M}^{i+1},b'\in \mcl{M}^{j+1}$. Hence we obtain 
\equ{ab=sts_is_j+c \text{ where }c\in \mcl{M}^{i+j+1} \text{ and } s,t \text{ are units modulo }\mcl{M}^{k} \text{ for any }k\geq 1.}
Hence $sts_is_j\in \mcl{S}_{\mcl{M}^{i+j}}$ and therefore $ab\neq 0$.
 
Secondly we assume that $\mcl{R}$ is a noetherian local ring with unique maximal ideal $\mcl{M}$ so that it is enough to prove that $\mcl{R}$ is a discrete valuation ring.
For this purpose we prove that $\widehat{\mcl{R}}$ is a discrete valuation ring where $\widehat{\mcl{R}}$ is the $\mcl{M}\operatorname{-}$adic completion of $\mcl{R}$. 
\begin{claim}
With the hypothesis in Theorem~\ref{theorem:DD} if $\R$ is a noetherian local ring then $\widehat{\R}$ is a discrete valuation ring.
\end{claim}
\begin{proof}[Proof of Claim]
Using a similar argument in Claim~\ref{claim:ID} we conclude that for any $t\in \mcl{M}\bs \mcl{M}^2, t^k \in \mcl{M}^k\bs \mcl{M}^{k+1}$ for any $k\geq 1$. 
Fix one such element $t$. Now every element $x\in \big(\mcl{M}^n \bs \mcl{M}^{n+1}\big)\subs \mcl{R}$ has a power series expansion as 
\equ{x=s_nt^n+s_{n+1}t^n\ldots \text{ with }s_i\in \big(\mcl{R}\bs \mcl{M}\big)\cup\{0\}, i\geq n,s_n\neq 0}
which converges in $\widehat{\mcl{R}}$. Using M.~F.~Atiyah and I.~G.~Macdonald~\cite{AM}, proposition $10.15$ on page $109$, 
we conclude that $\widehat{\mcl{M}}^n=\widehat{\mcl{R}}\mcl{M}^n=(t^n)$ and $\frac{\widehat{\mcl{M}}^n}{\widehat{\mcl{M}}^{n+1}}\cong \frac{\mcl{M}^n}{\mcl{M}^{n+1}}$. On the quotient field $\mbb{K}$
of $\widehat{\mcl{R}}$ we have a well defined valuation such that $\widehat{R}$ is a discrete valuation ring of $\mbb{K}$. This proves the claim.
\end{proof}
So $\widehat{\mcl{R}}$ is a noetherian local domain
of dimension one. Hence $\mcl{R}$ is also noetherian local domain of dimension one because dim $\mcl{R}=$ dim $\widehat{\mcl{R}}$ which implies $\mcl{R}$ is a discrete valuation ring
using M.~F.~Atiyah and I.~G.~Macdonald~\cite{AM}, proposition $9.2$ on page $94$. 

So in general the ring $\mcl{R}$ is a noetherian domain of dimension one whose local rings are discrete valuation rings. 
Now using M.~F.~Atiyah and I.~G.~Macdonald~\cite{AM}, Theorem $9.3$ on page $95$ we conclude that $\mcl{R}$ is a Dedekind domain. 

Now property $(4)$ in Definition~\ref{defn:Good} follows because of the existence of ideal factorization as a product of maximal ideals. This proves the theorem. 
\end{proof}


\section{\bf{Acknowledgments}}
The author thanks the referee profusely for going through the paper and suggesting improvements. The author also thanks his mentor Prof. B. Sury for his support,
encouragement and useful comments. The author is supported by Indian Statistical Institute (ISI) grant as a visiting scientist at ISI Bangalore, India.  


\bibliographystyle{abbrv}
\def\cprime{$'$}

\end{document}